\documentclass[11pt,reqno]{amsart}

\usepackage[margin=1in]{geometry}
\usepackage{amsmath,amssymb,amsthm}
\usepackage{mathrsfs}
\usepackage{enumitem}
\usepackage[dvipsnames]{xcolor}
\usepackage[colorlinks=true,linkcolor=blue,citecolor=blue,urlcolor=blue]{hyperref}

\numberwithin{equation}{section}
\newtheorem{theorem}{Theorem}[section]
\newtheorem{proposition}[theorem]{Proposition}
\newtheorem{lemma}[theorem]{Lemma}
\newtheorem{corollary}[theorem]{Corollary}
\theoremstyle{definition}

\newtheorem{assumption}[theorem]{Assumption}
\theoremstyle{remark}
\newtheorem{remark}[theorem]{Remark}
\newcommand{\Lh}{\hyperref[ass:L]{\textup{(L)}}}
\newcommand{\Gh}{\hyperref[ass:G]{\textup{(G)}}}

\newcommand{\Ric}{\operatorname{Ric}}
\newcommand{\Def}{\operatorname{Def}}
\newcommand{\dv}{\operatorname{div}}
\newcommand{\tr}{\operatorname{tr}}
\newcommand{\supp}{\operatorname{supp}}
\newcommand{\vol}{\operatorname{vol}}
\newcommand{\dist}{\operatorname{dist}}
\newcommand{\Hess}{\nabla^2}
\newcommand{\DHo}{\Delta_{H}}
\newcommand{\DB}{\Delta_{B}}
\newcommand{\cH}{\mathscr{H}}
\newcommand{\cA}{\mathscr{A}}
\newcommand{\cN}{\mathscr{N}}
\newcommand{\cO}{\mathscr{O}}
\newcommand{\cB}{\mathscr{B}}

\newcommand{\Lam}{\mathbb{L}}
\newcommand{\PP}{\mathbb{P}}
\newcommand{\R}{\mathbb{R}}
\newcommand{\C}{\mathbb{C}}
\newcommand{\dd}{\mathrm{d}}
\newcommand{\inte}{\operatorname{int}}
\newcommand{\ww}[1]{\langle\!\langle #1\rangle\!\rangle}

\begin{document}

\title[Hodge--Carleman estimates and attractors for heterogeneous elastic waves]{Hodge--Carleman estimates and global attractors for heterogeneous elastic waves with localized damping on Riemannian manifolds}

\author{Paulo N. Seminario-Huertas}
\thanks{Departamento de Matem\'atica Aplicada a las Tecnolog\'ias de la Informaci\'on y las Comunicaciones, Universidad Polit\'ecnica de Madrid, Campus de Montegancedo, 28660 Boadilla del Monte, Madrid, Spain. \textit{Email address}: \texttt{paulo.seminario.huertas@upm.es}}

\subjclass[2020]{35B41, 35B60, 35L52, 58J45, 74H40, 93B07}
\keywords{Lam\'e system, Riemannian manifold, Hodge Laplacian, Carleman estimate, unique continuation, observability, global attractor}

\begin{abstract}
	We study the long-time dynamics of the semilinear Lam\'e system of isotropic elasticity on a compact Riemannian manifold with boundary, for a heterogeneous medium whose density and Lam\'e moduli vary in space, under a frictional dissipation which acts only on an open set $\omega$. The elastic operator is the divergence of the Hooke stress tensor, and the starting point is a structural identity which shows that the dilatation and the vorticity of the displacement satisfy wave equations with the pressure and the shear speeds, respectively, coupled to the displacement only through first-order terms. A single weight function, free of critical points and strictly convex with respect to the two ray metrics of the medium, then serves for the displacement, its dilatation and its vorticity alike, and this Hodge structure yields Carleman estimates for the Lam\'e operator with variable coefficients at two energy levels, provided that $\omega$ contains a neighbourhood of the boundary and that such a weight exists on the undamped region. The hypothesis is fulfilled on every compact manifold with boundary by damping regions of arbitrarily small volume. From these estimates we derive unique continuation for elastic waves with bounded, merely measurable, time-dependent potentials and an internal observability inequality, which in turn yield, for subcritical restoring forces in dimension three, a compact global attractor which coincides with the unstable manifold of the equilibria, has finite fractal dimension and is bounded in $(H^2\cap H^1_0)\times H^1_0$, together with a generalized exponential attractor.
\end{abstract}

\maketitle

%=====================================================================
\section{Introduction}\label{sec:intro}
%=====================================================================

This paper is concerned with the long-time dynamics of elastic waves in a curved and heterogeneous elastic body when the dissipation acts only on a part of it. Let $(M,g)$ be a compact connected Riemannian manifold of dimension $n\ge 2$ with smooth boundary $\partial M$. We consider the semilinear Lam\'e system
\begin{equation}\label{eq:main}
\begin{cases}
\varrho(x)\partial_t^2u-\Lam u+a(x)\partial_tu+f(x,u)=h(x) & \text{in } (0,\infty)\times M,\\
u=0 & \text{on } (0,\infty)\times\partial M,\\
u(0)=u_0,\quad \partial_tu(0)=u_1 & \text{in } M,
\end{cases}
\end{equation}
where the unknown $u(t,x)\in T_xM$ is the displacement field, $\varrho>0$ is the density, $a\ge0$ is a damping coefficient which is positive only on an open set $\omega\subset M$, $f$ is a nonlinear restoring force and $h$ is an external load. The elastic operator is
\begin{equation}\label{eq:Lame-def}
\Lam u=\dv\sigma(u),\qquad \sigma(u)=\lambda(\dv u)\,g+2\mu\Def u,\qquad \Def u=\tfrac12\mathcal{L}_ug,
\end{equation}
that is, the divergence of the Hooke stress tensor of an isotropic material whose Lam\'e moduli $\lambda$ and $\mu$ are smooth functions on $M$. The density and the Lam\'e moduli are subject only to the positivity conditions of Assumption~\ref{ass:L}.

When the Euclidean Lam\'e operator is transplanted to a curved background, one meets the ambiguity that different Laplacians acting on vector fields (the Bochner Laplacian, the Hodge Laplacian, or the operator $\Def^*\Def$) coincide in flat space and differ by curvature terms on a manifold. The same issue arises for the viscous term of the Navier--Stokes equations, as discussed by Chan, Czubak and Disconzi \cite{CCD17}. In this paper we adopt the operator \eqref{eq:Lame-def}, which is the one derived from the stored energy of linearized elasticity on $(M,g)$, see Marsden and Hughes \cite{MH83} and Grubi\'c, LeFloch and Mardare \cite{GLM14}, as well as Ciarlet \cite{Ciarlet05} for the same operator written in curvilinear coordinates of a Euclidean body, and which also appears in the inverse problem of Tan and Liu \cite{TL22}. In Proposition~\ref{prop:three-forms} below we show that
\begin{equation}\label{eq:intro-forms}
\Lam u=-\mu\DHo u-(\lambda+\mu)\,\dd\delta u+2\mu\Ric(u)+\cB u,
\end{equation}
where $\DHo=\dd\delta+\delta\dd$ is the Hodge Laplacian, $\Ric$ is the Ricci tensor and $\cB$ is a first-order operator which vanishes when $\lambda$ and $\mu$ are constant.

Global attractors for semilinear Lam\'e systems were obtained in \cite{BSMS23}, for Euclidean domains, frictional damping acting on the whole body and nonlinear forces of critical growth. Related results concern weakly damped Lam\'e systems with delay in \cite{MMS21}, degenerate memory and time-varying delay in \cite{HYY24}, non-autonomous thermoelastic Lam\'e systems in \cite{QW24}, supercritical sources with nonlinear damping in \cite{YZSZ26}, and variable Lam\'e coefficients in \cite{BMS26}. In all these works the domain is Euclidean and the dissipation acts on the whole body. On the other hand, the stabilization of elastic waves by a dissipation of restricted support has a long history. Boundary stabilization was established by Lagnese \cite{Lag83}, by Alabau and Komornik \cite{AK99} and by Horn \cite{Horn98b}, the latter on the basis of the sharp trace regularity obtained in \cite{Horn98}, and the boundary controllability was studied by Belishev and Lasiecka \cite{BL02} and by Dehman and Raymond \cite{DR15}. Dissipations supported in a neighbourhood of a part of the boundary were considered by Guesmia \cite{Gue00} and by Astaburuaga and Char\~ao \cite{AC02}, and localized interior dissipations under microlocal conditions by Bellassoued \cite{Bel03} and by Bchatnia and Daoulatli \cite{BD13}, the latter with nonlinear damping and an external force. Internal observability and stabilization of inhomogeneous and anisotropic elastic waves in Euclidean domains, with $\omega$ a neighbourhood of the whole boundary, were obtained by Meng, Ning and Yang \cite{MNY21} by multipliers of Morawetz type under a one-sided radial condition on the coefficients, and the propagation of microlocal energy for the Lam\'e system, including the conversion of pressure and shear modes at the boundary, was described by Burq and Lebeau \cite{BLe01}. For the scalar wave equation with localized damping, global attractors were obtained by Feireisl and Zuazua \cite{FZ93} and by Chueshov, Lasiecka and Toundykov \cite{CLT08} in Euclidean domains, by Joly and Laurent \cite{JL13} on compact manifolds under the geometric control condition of Bardos, Lebeau and Rauch \cite{BLR92}, and in \cite{CMMS26} on Riemannian manifolds under an obstacle condition. To the best of our knowledge, the long-time dynamics of semilinear Lam\'e systems with localized damping has not been studied before, either in Euclidean domains or on Riemannian manifolds.

The main obstruction is unique continuation. In the theory of attractors for damped waves one has to show that a trajectory whose energy is constant must be an equilibrium. When the damping acts only on $\omega$, this amounts to the following property. If $z$ solves
\begin{equation}\label{eq:intro-ucp}
\varrho\,\partial_t^2z-\Lam z+V(t,x)z=0,\qquad z=0\ \text{in } (0,T)\times\omega,
\end{equation}
then $z\equiv0$, where $V$ is a potential generated by the trajectory itself. Such a potential is in general neither smooth nor analytic in time. For the Lam\'e system, unique continuation is well understood in the stationary case, see Dehman and Robbiano \cite{DRob93}, Ang, Ikehata, Trong and Yamamoto \cite{AITY98}, Weck \cite{Wec01}, Alessandrini and Morassi \cite{AM01}, Escauriaza \cite{Esc06}, Lin, Nakamura and Wang \cite{LNW10} and Yu \cite{Yu11}. In the evolutionary case, Carleman estimates and uniqueness results were obtained by Eller, Isakov, Nakamura and Tataru \cite{EINT02}, Bellassoued \cite{Bel01}, Imanuvilov, Isakov and Yamamoto \cite{IIY03} and Imanuvilov and Yamamoto \cite{IY05,IY05b}, see also the monograph of Bellassoued and Yamamoto \cite{BY17}. All these results are Euclidean and rely on the fact that $\dv u$ and $\operatorname{curl}u$ satisfy wave equations coupled only through lower-order terms. On the other hand, the sharp uniqueness theorems of Tataru \cite{Tat95}, Robbiano and Zuily \cite{RZ98}, H\"ormander \cite{Hor97} and Laurent and L\'eautaud \cite{LL19} require coefficients which are analytic in time, and this is the reason why analyticity of the nonlinearity is assumed in \cite{JL13}.

Our approach is based on the Hodge form \eqref{eq:intro-forms}. Let $c_S=(\mu/\varrho)^{1/2}$ and $c_P=((\lambda+2\mu)/\varrho)^{1/2}$ be the shear and pressure speeds, and let $\PP=\partial_t^2-\varrho^{-1}\Lam$. In Proposition~\ref{prop:hodge} we show that
\begin{equation}\label{eq:intro-P}
\delta\,\PP u=\bigl(\partial_t^2+c_P^2\DHo\bigr)(\delta u)+\mathcal R_0(\delta u,\dd u,u),\qquad \dd\,\PP u=\bigl(\partial_t^2+c_S^2\DHo\bigr)(\dd u)+\mathcal R_2(\delta u,\dd u,u),
\end{equation}
where $\mathcal R_0$ and $\mathcal R_2$ are first-order differential operators with smooth coefficients. Thus the dilatation $\delta u=-\dv u$ propagates with the pressure speed and the vorticity $\dd u$ with the shear speed, while $u$ itself is governed by the Hodge wave operator with the shear speed, all the couplings being of first order. The two families of waves travel along the geodesics of the ray metrics $g_S=c_S^{-2}g$ and $g_P=c_P^{-2}g$, and a single weight function serves for both speeds as soon as it is strictly convex with respect to $g_S$ and $g_P$. In Euclidean domains, the pseudoconvexity of a single weight with respect to both wave operators, with the shear and the pressure speeds, is the hypothesis of \cite[Theorem~3.1]{IIY03}; its formulation in terms of two metrics appears in Isakov, Wang and Yamamoto \cite{IWY07} and in Isakov and Kim \cite{IK08,IK09}, for the Lam\'e system with residual stress, and it is the hypothesis of the Carleman estimate with source in $H^1$ of \cite[Lemma~7.4]{BY17}, which concerns compactly supported fields in a domain of $\R^3$, with moduli of class $C^2$, unit density and residual stress. An explicit radial form of the same condition is used in \cite{BIY08}. In one space dimension, where the speeds are constant and a common quadratic weight is admissible for all the metrics, the same device appears in \cite{MMS25} for Bresse systems. When the Poisson ratio is constant, and in particular for a Poisson solid, the two convexity requirements coincide. Combining \eqref{eq:intro-P} with the Riemannian Carleman estimates of \cite{BY17} and with the Carleman estimates in negative Sobolev norms of \cite{IIY03}, we derive Carleman estimates for the Lam\'e operator on $(M,g)$. We call them Hodge--Carleman estimates. The name is a label which records how they are obtained, through the Hodge form \eqref{eq:intro-forms} and the commutation of $\dd$ and $\delta$ with $\DHo$, and not a claim that Hodge theory enters beyond this point; in particular Theorem~\ref{thm:HC2} is the counterpart, on a Riemannian manifold and with variable density, of \cite[Lemma~7.4]{BY17}.

The main features of the paper are the following.
\begin{enumerate}[label=(\alph*)]
\item Two Hodge--Carleman estimates for the Lam\'e operator with variable coefficients on Riemannian manifolds (Theorems~\ref{thm:HC2} and~\ref{thm:HC0}). They hold under the geometric hypothesis \Gh, stated in Assumption~\ref{ass:G}, which requires that $\omega$ contains a neighbourhood of $\partial M$ and that the undamped region $M\setminus\omega$ carries a function without critical points which is strictly convex with respect to $g_S$ and $g_P$. The first estimate controls the solution up to second order in terms of the source in $H^1$, while the second one controls the solution, its dilatation and its vorticity in $L^2$ in terms of the source in $L^2$.
\item On every compact Riemannian manifold with boundary and for every $\varepsilon>0$ there exist damping regions of volume smaller than $\varepsilon$ satisfying \Gh{} (Proposition~\ref{prop:small}). Moreover, \Gh{} implies that every pressure ray and every shear ray meets $\omega$ in a uniform time (Proposition~\ref{prop:rays}).
\item Unique continuation for elastic waves on $(M,g)$ with bounded, merely measurable and time-dependent potentials (Theorem~\ref{thm:ucp}). As for every uniqueness theorem derived from a Carleman estimate, and in particular for the Euclidean results of \cite{IIY03}, no regularity of the potential beyond boundedness is needed; we state the result because it is the form in which uniqueness is needed in (e), and because it is not covered by the uniqueness theorems which require analyticity in time.
\item An internal observability inequality for the linear Lam\'e system (Theorem~\ref{thm:obs}). It is obtained by lifting the problem to the second energy level, where the first Hodge--Carleman estimate applies, and by a compactness-uniqueness argument which rests on (c).
\item In dimension three and for nonlinear forces of subcritical growth, the existence of a compact global attractor which coincides with the unstable manifold of the set of equilibria, has finite fractal dimension, and is bounded in $(H^2\cap H^1_0)\times H^1_0$, together with a generalized exponential attractor (Theorem~\ref{thm:main}). We do not prove that the energy is a strict Lyapunov function on the whole phase space. Instead, the attractor is obtained from the criterion of Chueshov for asymptotically smooth systems with a possibly non-strict Lyapunov function \cite[Theorem~2.4.21]{Ch15}, and the Lyapunov property is only exploited along complete trajectories, on which the quasi-stability estimate provides the additional regularity that makes the potential $V$ bounded, so that (c) applies. The two ingredients of this scheme which we could not locate in the literature, the regularity of complete trajectories in bounded forward invariant sets before the attractor is known to exist and the identification of the constant-energy set with the set of equilibria, are proved in Section~\ref{sec:dynamics}; the same scheme is followed in \cite{CMMS26} for scalar waves.
\end{enumerate}

It is worth mentioning two limitations of the present approach. First, the requirement that $\omega$ contains a neighbourhood of $\partial M$ stems from the absence of boundary conditions for $\delta u$ and $\dd u$ in \eqref{eq:intro-P}, and it removes by assumption the phenomena which are specific to the boundary of an elastic body, namely the reflection of rays and the conversion between pressure and shear modes \cite{BLe01}. In Euclidean domains with constant coefficients this hypothesis is more restrictive than the multiplier geometry of \cite{Zua90,Gue00,AC02}, where $\omega$ need only contain a neighbourhood of a part of the boundary. With variable coefficients, the multiplier method of \cite{MNY21} also requires $\omega$ to contain a neighbourhood of the whole boundary, under a one-sided radial condition on the coefficients and with no convexity requirement on the undamped region, so that neither hypothesis contains the other. The contribution of the present paper lies in the Riemannian setting with variable coefficients, not in the geometry of the damping region. The same device, a cut-off vanishing in a neighbourhood of the boundary where the solution is observed, is used in the Carleman estimate of \cite[Lemma~2.3]{BIY08}. Carleman estimates for the Lam\'e system up to a Dirichlet boundary, for solutions which do not vanish near the boundary, were obtained by Imanuvilov and Yamamoto in two-dimensional \cite{IY05} and three-dimensional \cite{IY05b} Euclidean domains, through a microlocal factorization near the boundary and under a sign condition on the weight on the undamped part of the boundary. Their extension to Riemannian manifolds, or to variable coefficients in higher dimension, is an open problem, and (G1) is the price we pay for avoiding it. Second, the $L^2$ estimate of Theorem~\ref{thm:HC0} does not control the gradient of the solution. It is the counterpart of the Euclidean estimates of \cite[Theorem~3.1]{IIY03} and \cite[Theorem~2.3]{IY05}, which present the same loss. In two- and three-dimensional Euclidean domains, the gradient is recovered from an $L^2$ source in \cite[Theorem~2.2]{IY05} and \cite[Theorem~20.2]{IY05b}, at the price of one power of the large parameter, by the boundary microlocal analysis mentioned above, which is not available here. The loss is harmless for bounded potentials. For sources of critical growth, we show in Remark~\ref{rem:critical} that all the steps of the proof of Theorem~\ref{thm:main} remain valid except asymptotic smoothness, which in \cite{CLT08} rests on the absorption of a critical potential by the Carleman estimate and in \cite{CMMS26} on Strichartz estimates. Neither tool is available for the Lam\'e system under \Gh{}, and the critical case remains open.

The paper is organized as follows. In Section~\ref{sec:lame} we derive the three forms of the Lam\'e operator, the Hodge structure and the functional framework. Section~\ref{sec:wellposed} contains the assumptions and the well-posedness of \eqref{eq:main}. In Section~\ref{sec:geometry} we introduce the geometric hypothesis, we construct damping regions of small volume, and we recall the Carleman weights. The Hodge--Carleman estimates are proved in Section~\ref{sec:HC}. Sections~\ref{sec:ucp}, \ref{sec:obs} and \ref{sec:dynamics} are devoted to unique continuation, observability and long-time dynamics, respectively. In Appendix~\ref{app:rays} we compare \Gh{} with the geometric control and obstacle conditions, and in Appendix~\ref{app:IIY} we reproduce, for the reader's convenience, the proof of the Carleman estimate in negative norms of \cite{IIY03} in the present setting.

%=====================================================================
\section{The Lam\'e operator on a Riemannian manifold}\label{sec:lame}
%=====================================================================

\subsection{Notation}
Throughout the paper $(M,g)$ is a compact connected Riemannian manifold of dimension $n\ge2$ with smooth nonempty boundary $\partial M$, $\inte M=M\setminus\partial M$, and $\nu$ is the outward unit normal. We denote by $\nabla$ the Levi-Civita connection, by $\langle\cdot,\cdot\rangle$ and $|\cdot|$ the inner product and norm induced by $g$ on tensors and forms, and by $\dd v$ the Riemannian measure. The curvature tensor is $R(X,Y)=\nabla_X\nabla_Y-\nabla_Y\nabla_X-\nabla_{[X,Y]}$, the second covariant derivative is $\nabla^2_{X,Y}=\nabla_X\nabla_Y-\nabla_{\nabla_XY}$, so that
\[
\nabla^2_{X,Y}-\nabla^2_{Y,X}=R(X,Y),
\]
and the Ricci tensor is $\Ric(Y,Z)=\tr\bigl(X\mapsto R(X,Y)Z\bigr)$, regarded also as the endomorphism $\Ric(u)$ defined by $\langle\Ric(u),Y\rangle=\Ric(u,Y)$. With these conventions the unit sphere has $\Ric=(n-1)g$.

Vector fields and $1$-forms are identified through $g$, and we write $\dd u$ and $\delta u$ for the exterior derivative and the codifferential of the $1$-form associated with $u$. Here $\delta$ is the formal adjoint of $\dd$, so that $\delta u=-\dv u$ for $1$-forms. For a function $\varphi$ and a $1$-form $\alpha$ we shall use the product rules
\begin{equation}\label{eq:product}
\delta(\varphi\alpha)=\varphi\,\delta\alpha-\langle\dd\varphi,\alpha\rangle,\qquad \dd(\varphi\alpha)=\dd\varphi\wedge\alpha+\varphi\,\dd\alpha .
\end{equation}
The Hodge Laplacian is $\DHo=\dd\delta+\delta\dd$, which is nonnegative, and the Bochner Laplacian is $\DB=\tr\nabla^2$, which is nonpositive. For a function $\varphi$, $\nabla\varphi$ is its gradient and $\Hess\varphi$ its Hessian. For a vector field $u$ we set
\[
S_u(X,Y)=\langle\nabla_Xu,Y\rangle,\qquad S_u^\top(X,Y)=S_u(Y,X),\qquad \Def u=\tfrac12\bigl(S_u+S_u^\top\bigr),
\]
and, for a vector field $X$, we denote by $(\nabla u)^\top X$ the vector field defined by $\langle(\nabla u)^\top X,Y\rangle=\langle\nabla_Yu,X\rangle$. The divergence of a $(0,2)$-tensor $T$ is $(\dv T)(Y)=\sum_i(\nabla_{e_i}T)(e_i,Y)$, where $\{e_i\}$ is a local orthonormal frame. Since $\tr\Def u=\dv u$, the stress tensor in \eqref{eq:Lame-def} is a symmetric $(0,2)$-tensor and $\Lam u$ is a vector field.

We write $(\cdot,\cdot)$ and $\|\cdot\|$ for the inner product and the norm of $L^2(M)$, for functions, vector fields and forms alike, and $H^k(M;TM)$, $H^1_0(M;TM)$ for the usual Sobolev spaces of vector fields. When no confusion is possible we simply write $L^2$, $H^1_0$ and $H^k$. For functions of $(t,x)$, $\nabla$ always denotes the spatial covariant derivative.

\subsection{The three forms of the Lam\'e operator}

Throughout the paper we make the following assumption on the material.

\begin{assumption}[L]\label{ass:L}
The density and the Lam\'e moduli satisfy
\[
\varrho,\lambda,\mu\in C^\infty(M),\qquad \varrho>0,\qquad \mu>0\qquad\text{and}\qquad n\lambda+2\mu>0\quad\text{on } M.
\]
\end{assumption}

In dimension three, \Lh{} means that the density, the shear modulus and the bulk modulus are positive. Since $n\ge2$, \Lh{} implies $\lambda>-2\mu/n\ge-\mu$, hence
\begin{equation}\label{eq:lambda-mu}
\lambda+\mu>0\qquad\text{and}\qquad \lambda+2\mu>\mu>0\qquad\text{on } M.
\end{equation}
We define the shear and pressure speeds and the corresponding ray metrics by
\begin{equation}\label{eq:speeds}
c_S=\Bigl(\frac{\mu}{\varrho}\Bigr)^{1/2},\qquad c_P=\Bigl(\frac{\lambda+2\mu}{\varrho}\Bigr)^{1/2},\qquad g_S=c_S^{-2}g,\qquad g_P=c_P^{-2}g,
\end{equation}
and the first-order operator
\begin{equation}\label{eq:B}
\cB u=(\dv u)\nabla\lambda+\nabla_{\nabla\mu}u+(\nabla u)^\top\nabla\mu .
\end{equation}
Since
\[
2\Def u(\nabla\mu,Y)=\langle\nabla_{\nabla\mu}u,Y\rangle+\langle\nabla_Yu,\nabla\mu\rangle,
\]
we have $\cB u=(\dv u)\nabla\lambda+2(\Def u)(\nabla\mu,\cdot)$.

\begin{lemma}\label{lem:ricci-weitz}
Let $u$ be a vector field of class $C^2$. Then
\begin{align}
\dv S_u^\top&=\nabla\dv u+\Ric(u),\label{eq:ricci-id}\\
\DHo u&=-\DB u+\Ric(u).\label{eq:weitz}
\end{align}
\end{lemma}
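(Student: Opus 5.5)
Both identities are pointwise, so I will verify them at a point $p$ in a local orthonormal frame $\{e_i\}$ that is normal at $p$, i.e.\ $\nabla e_i(p)=0$. Then covariant derivatives of components reduce to ordinary directional derivatives at $p$, and $\nabla^2_{X,Y}=\nabla_X\nabla_Y$ there when applied to frame fields. The key input is the commutation rule $\nabla^2_{X,Y}-\nabla^2_{Y,X}=R(X,Y)$ together with the convention $\Ric(Y,Z)=\tr(X\mapsto R(X,Y)Z)$ fixed in the Notation.

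For \eqref{eq:ricci-id}, I write $(\dv S_u^\top)(Y)=\sum_i(\nabla_{e_i}S_u^\top)(e_i,Y)=\sum_i(\nabla_{e_i}S_u)(Y,e_i)$. Since $S_u(X,Z)=\langle\nabla_Xu,Z\rangle$, I get $(\nabla_W S_u)(X,Z)=\langle\nabla^2_{W,X}u,Z\rangle$. Hence
\[
(\dv S_u^\top)(Y)=\sum_i\langle\nabla^2_{e_i,Y}u,e_i\rangle=\sum_i\langle\nabla^2_{Y,e_i}u,e_i\rangle+\sum_i\langle R(e_i,Y)u,e_i\rangle .
\]
The first sum is $\nabla_Y(\tr\nabla u)=\langle\nabla\dv u,Y\rangle$, because trace commutes with $\nabla_Y$. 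The second sum is $\tr(X\mapsto R(X,Y)u)=\Ric(Y,u)=\langle\Ric(u),Y\rangle$, using the symmetry of Ricci. The one point needing care is the sign convention: I will check it against the sphere normalization $\Ric=(n-1)g$ to make sure the curvature term enters with $+$.

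For \eqref{eq:weitz}, I first express $\dd$ and $\delta$ through $\nabla$ for the $1$-form $u^\flat$. These are $\delta u=-\dv u$ and $\delta\dd u=-\sum_i\iota_{e_i}\nabla_{e_i}(\dd u)$, with $(\dd u)(X,Y)=\langle\nabla_Xu,Y\rangle-\langle\nabla_Yu,X\rangle$, that is, $\dd u=S_u-S_u^\top$ as a $(0,2)$-tensor. It follows that $(\delta\dd u)(Y)=-(\dv(S_u-S_u^\top))(Y)$ up to the ordering of slots, which I will check directly. Since $(\dv S_u)(Y)=\sum_i\langle\nabla^2_{e_i,e_i}u,Y\rangle=\langle\DB u,Y\rangle$, this gives $\delta\dd u=-\DB u+\dv S_u^\top$. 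By \eqref{eq:ricci-id}, $\dv S_u^\top=\nabla\dv u+\Ric(u)=-\dd\delta u+\Ric(u)$. Adding $\dd\delta u$ then yields $\DHo u=-\DB u+\Ric(u)$.

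The main obstacle is bookkeeping of signs and index slots: the orientation of $\dd u$, the formula for $\delta$ on $2$-forms, and the curvature sign convention. As a cross-check, I will use the integrated identity $\|\dd u\|^2+\|\delta u\|^2=\|\nabla u\|^2+(\Ric u,u)$ for compactly supported $u$ (Bochner). I will also check the $C^2$ requirement: only second derivatives of $u$ appear, so the computation is legitimate pointwise.
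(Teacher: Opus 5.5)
Your proposal is correct: the proof of \eqref{eq:ricci-id} is the paper's frame computation, and for \eqref{eq:weitz} you run the same computation, organized more economically. Writing $\dd u=S_u-S_u^\top$ gives $\delta\dd u=-\DB u+\dv S_u^\top$, so the curvature term comes directly from \eqref{eq:ricci-id}; the paper instead redoes the commutation $\nabla^2_{e_i,Y}-\nabla^2_{Y,e_i}$ on the $1$-form $\alpha$, and then needs the action of $R$ on $1$-forms and the skew-symmetry of $\langle R(X,Y)\cdot,\cdot\rangle$ to identify the Ricci term.
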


\begin{proof}
Fix $Y\in T_xM$ and a local orthonormal frame $\{e_i\}$. Since $(\nabla_{e_i}S_u)(X,Y)=\langle\nabla^2_{e_i,X}u,Y\rangle$, we have
\[
(\dv S_u^\top)(Y)=\sum_i\langle\nabla^2_{e_i,Y}u,e_i\rangle=\sum_i\langle\nabla^2_{Y,e_i}u,e_i\rangle+\sum_i\langle R(e_i,Y)u,e_i\rangle .
\]
The first sum equals $Y(\dv u)$, because $\dv u=\tr\nabla u$, and the second one equals $\Ric(Y,u)$ by definition. This proves \eqref{eq:ricci-id}.

For \eqref{eq:weitz}, let $\alpha$ be the $1$-form associated with $u$. Since $\dd\alpha(X,Y)=(\nabla_X\alpha)(Y)-(\nabla_Y\alpha)(X)$ and $\delta\alpha=-\sum_i(\nabla_{e_i}\alpha)(e_i)$, we obtain
\[
(\delta\dd\alpha)(Y)=-\sum_i(\nabla^2_{e_i,e_i}\alpha)(Y)+\sum_i(\nabla^2_{e_i,Y}\alpha)(e_i),\qquad (\dd\delta\alpha)(Y)=-\sum_i(\nabla^2_{Y,e_i}\alpha)(e_i).
\]
We add both identities and recall that the curvature acts on $1$-forms by
\[
(R(X,Y)\alpha)(Z)=-\alpha\bigl(R(X,Y)Z\bigr).
\]
We find
\[
(\DHo\alpha)(Y)=-(\DB\alpha)(Y)-\sum_i\langle u,R(e_i,Y)e_i\rangle=-(\DB\alpha)(Y)+\sum_i\langle R(e_i,Y)u,e_i\rangle,
\]
where the skew-symmetry $\langle R(X,Y)Z,W\rangle=-\langle R(X,Y)W,Z\rangle$ was used in the last step. The last sum is $\Ric(Y,u)$, and \eqref{eq:weitz} follows.
\end{proof}

\begin{proposition}\label{prop:three-forms}
Assume \Lh{}. For every vector field $u$ of class $C^2$,
\begin{align}
\Lam u&=\mu\DB u+(\lambda+\mu)\nabla\dv u+\mu\Ric(u)+\cB u\label{eq:form-B}\\
&=-\mu\DHo u-(\lambda+\mu)\dd\delta u+2\mu\Ric(u)+\cB u\label{eq:form-H}\\
&=-(\lambda+2\mu)\dd\delta u-\mu\,\delta\dd u+2\mu\Ric(u)+\cB u.\label{eq:form-PS}
\end{align}
Since all the operators involved are differential operators with smooth coefficients, these identities also hold in the sense of distributions for $u\in L^2_{\mathrm{loc}}(\inte M;TM)$.
\end{proposition}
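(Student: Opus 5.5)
The plan is to prove \eqref{eq:form-B} by a direct computation of $\dv\sigma(u)$, and then obtain \eqref{eq:form-H} and \eqref{eq:form-PS} from it by purely algebraic substitutions based on Lemma~\ref{lem:ricci-weitz}. For the first identity, write $\sigma(u)=\lambda(\dv u)g+\mu(S_u+S_u^\top)$. Since $\nabla g=0$, the divergence of the first term is $\dv(\lambda(\dv u)g)=\nabla(\lambda\dv u)=\lambda\nabla\dv u+(\dv u)\nabla\lambda$. For the second term we use the product rule for the divergence of a $(0,2)$-tensor: $\dv(\mu T)=\mu\dv T+T(\nabla\mu,\cdot)$.

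Next we compute $\dv S_u$ and $\dv S_u^\top$. From $(\nabla_{e_i}S_u)(e_i,Y)=\langle\nabla^2_{e_i,e_i}u,Y\rangle$ we get $\dv S_u=\DB u$, while \eqref{eq:ricci-id} gives $\dv S_u^\top=\nabla\dv u+\Ric(u)$. The lower-order term is $(S_u+S_u^\top)(\nabla\mu,\cdot)=2(\Def u)(\nabla\mu,\cdot)$. Adding everything, and using the reformulation of $\cB$ recorded after \eqref{eq:B}, yields \eqref{eq:form-B}:
\[
\Lam u=\mu\DB u+(\lambda+\mu)\nabla\dv u+\mu\Ric(u)+\cB u.
\]

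For \eqref{eq:form-H}, substitute $\DB u=-\DHo u+\Ric(u)$ from \eqref{eq:weitz}, so that the Ricci coefficient becomes $2\mu$. Then use $\nabla\dv u=-\dd\delta u$, which holds under the identification of vector fields with $1$-forms because $\delta u=-\dv u$ and the gradient of a function corresponds to its differential. For \eqref{eq:form-PS}, expand $\DHo=\dd\delta+\delta\dd$ in \eqref{eq:form-H} and collect the $\dd\delta u$ terms, whose coefficients $-\mu-(\lambda+\mu)$ sum to $-(\lambda+2\mu)$.

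Finally, the distributional statement follows because both sides are differential operators with smooth coefficients that agree on $C^\infty$ fields. Hence their formal transposes agree, and the identities extend by duality against compactly supported test fields in $\inte M$.

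The only real obstacle is keeping the conventions consistent: the index order in $\dv S_u^\top$ versus $\dv S_u$, and the sign of the curvature term. Lemma~\ref{lem:ricci-weitz} has already settled these, so the main care needed is checking that $T(\nabla\mu,\cdot)$ contracts the first slot, matching the divergence convention $(\dv T)(Y)=\sum_i(\nabla_{e_i}T)(e_i,Y)$.
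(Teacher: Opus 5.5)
Your proposal is correct and follows the paper's proof essentially step for step. The paper also computes $\dv\sigma(u)$ via $\dv(\varphi g)=\nabla\varphi$, the product rule $\dv(\mu T)=\mu\dv T+T(\nabla\mu,\cdot)$, $\dv S_u=\DB u$ and \eqref{eq:ricci-id}, and then obtains \eqref{eq:form-H} and \eqref{eq:form-PS} by substituting \eqref{eq:weitz}, $\nabla\dv u\leftrightarrow-\dd\delta u$ and $\DHo=\dd\delta+\delta\dd$.
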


\begin{proof}
For a function $\varphi$ we have $\dv(\varphi g)=\nabla\varphi$, and for a symmetric $(0,2)$-tensor $T$ we have $\dv(\mu T)=\mu\dv T+T(\nabla\mu,\cdot)$. Moreover $\dv S_u=\DB u$ and, by \eqref{eq:ricci-id}, $\dv S_u^\top=\nabla\dv u+\Ric(u)$. Hence
\[
\begin{aligned}
\Lam u&=\lambda\nabla\dv u+(\dv u)\nabla\lambda+\mu\bigl(\dv S_u+\dv S_u^\top\bigr)+2(\Def u)(\nabla\mu,\cdot)\\
&=\mu\DB u+(\lambda+\mu)\nabla\dv u+\mu\Ric(u)+\cB u,
\end{aligned}
\]
which is \eqref{eq:form-B}. Since $\nabla\dv u$ corresponds to $-\dd\delta u$, \eqref{eq:form-H} follows from \eqref{eq:form-B} and \eqref{eq:weitz}, and \eqref{eq:form-PS} follows from \eqref{eq:form-H} and $\DHo=\dd\delta+\delta\dd$.
\end{proof}

\subsection{The Hodge structure}

Let $\PP=\partial_t^2-\varrho^{-1}\Lam$. By \eqref{eq:form-PS} and \eqref{eq:speeds},
\begin{equation}\label{eq:P-hodge}
\PP u=\partial_t^2u+c_S^2\DHo u+(c_P^2-c_S^2)\,\dd\delta u+\mathcal Q u,\qquad \mathcal Qu=-2c_S^2\Ric(u)-\varrho^{-1}\cB u .
\end{equation}

\begin{proposition}\label{prop:hodge}
Assume \Lh{}. There exist first-order differential operators $\mathcal M_0$ and $\mathcal M_2$ with smooth coefficients, acting on vector fields and with values in functions and $2$-forms respectively, such that for every time-dependent vector field $u$ of class $C^3$,
\begin{align}
\delta\,\PP u&=\bigl(\partial_t^2+c_P^2\DHo\bigr)\delta u-\bigl\langle\dd c_P^2+\varrho^{-1}\dd(\lambda+2\mu),\dd\delta u\bigr\rangle\notag\\
&\quad-\bigl\langle\dd c_S^2+\varrho^{-1}\dd\mu,\delta\dd u\bigr\rangle+\mathcal M_0u,\label{eq:hodge-P}\\
\dd\,\PP u&=\bigl(\partial_t^2+c_S^2\DHo\bigr)\dd u+\bigl(\dd c_P^2-\varrho^{-1}\dd\lambda\bigr)\wedge\dd\delta u\notag\\
&\quad+\dd c_S^2\wedge\delta\dd u-\varrho^{-1}\nabla_{\nabla\mu}(\dd u)+\mathcal M_2u.\label{eq:hodge-S}
\end{align}
Since both sides are differential operators with smooth coefficients, \eqref{eq:hodge-P} and \eqref{eq:hodge-S} also hold in the sense of distributions for $u\in L^2_{\mathrm{loc}}((0,T)\times\inte M;TM)$.
\end{proposition}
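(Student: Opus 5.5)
We start from the form \eqref{eq:P-hodge}, $\PP u=\partial_t^2u+c_S^2\DHo u+(c_P^2-c_S^2)\dd\delta u+\mathcal Qu$, and apply $\delta$ and $\dd$ term by term. Two facts do most of the work: $\dd$ and $\delta$ commute with $\partial_t^2$ and with $\DHo$ (since $\dd^2=0$, $\delta^2=0$, we have $\delta\DHo=\delta\dd\delta=\DHo\delta$ and $\dd\DHo=\dd\delta\dd=\DHo\dd$), and the product rules \eqref{eq:product}, extended to forms of any degree. All derivatives falling on the coefficients $c_S^2$, $c_P^2-c_S^2$ produce first-order terms in $\dd\delta u$ and $\delta\dd u$. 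These are second order in $u$, so they must be kept explicitly. Everything that is first order in $u$ is absorbed into $\mathcal M_0$ or $\mathcal M_2$.

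\emph{Dilatation.} By \eqref{eq:product}, $\delta(c_S^2\DHo u)=c_S^2\DHo\delta u-\langle\dd c_S^2,\DHo u\rangle$. Also
\[
\delta\bigl((c_P^2-c_S^2)\dd\delta u\bigr)=(c_P^2-c_S^2)\DHo\delta u-\langle\dd(c_P^2-c_S^2),\dd\delta u\rangle .
\]
Using $\DHo u=\dd\delta u+\delta\dd u$, these combine to
\[
c_P^2\DHo\delta u-\langle\dd c_P^2,\dd\delta u\rangle-\langle\dd c_S^2,\delta\dd u\rangle .
\]
Next comes $\delta\mathcal Qu$. The Ricci part $-2\delta(c_S^2\Ric u)$ is first order in $u$ and goes into $\mathcal M_0$. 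For the term $-\delta(\varrho^{-1}\cB u)$, write $\cB u=(\dv u)\nabla\lambda+2\Def u(\nabla\mu,\cdot)$. Its principal part is $-\varrho^{-1}\delta\cB u=\varrho^{-1}\dv\cB u$ modulo first order. The second-order part of $\dv\bigl((\dv u)\nabla\lambda\bigr)$ is $\langle\nabla\lambda,\nabla\dv u\rangle=-\langle\dd\lambda,\dd\delta u\rangle$. For $2\dv(\Def u(\nabla\mu,\cdot))$, we have $\dv(\nabla_{\nabla\mu}u)=\nabla_{\nabla\mu}\dv u$ plus first order (commuting $\nabla$'s costs curvature and $\nabla^2\mu$ terms). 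We also have $\dv((\nabla u)^\top\nabla\mu)=\langle\DB u,\nabla\mu\rangle$ plus first order, and by \eqref{eq:weitz} this equals $-\langle\dd\delta u+\delta\dd u,\dd\mu\rangle$ modulo first order. The coefficient of $\langle\dd\mu,\cdot\rangle$ then totals $-2$ on $\dd\delta u$ and $-1$ on $\delta\dd u$. Adding $\lambda$ and $2\mu$ gives $-\varrho^{-1}\langle\dd(\lambda+2\mu),\dd\delta u\rangle-\varrho^{-1}\langle\dd\mu,\delta\dd u\rangle$, which matches \eqref{eq:hodge-P}.

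\emph{Vorticity.} The same scheme applies with $\dd(\varphi\alpha)=\dd\varphi\wedge\alpha+\varphi\dd\alpha$. We get $\dd(c_S^2\DHo u)=c_S^2\DHo\dd u+\dd c_S^2\wedge(\dd\delta u+\delta\dd u)$. Since $\dd\dd\delta u=0$, $\dd((c_P^2-c_S^2)\dd\delta u)=\dd(c_P^2-c_S^2)\wedge\dd\delta u$. These sum to $c_S^2\DHo\dd u+\dd c_P^2\wedge\dd\delta u+\dd c_S^2\wedge\delta\dd u$. For $-\dd(\varrho^{-1}\cB u)$, the principal part is $-\varrho^{-1}\dd\cB u$. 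First, $\dd((\dv u)\dd\lambda)=-\dd\delta u\wedge\dd\lambda$ plus zero order, which gives $-\varrho^{-1}\dd\lambda\wedge\dd\delta u$ after reordering signs. Second, for the $\mu$ part, I would use the Cartan-type identity $2\Def u(\nabla\mu,\cdot)=\mathcal L_{\nabla\mu}u^\flat-\ldots$. More concretely, $\nabla_{\nabla\mu}u^\flat+\langle\nabla_\cdot u,\nabla\mu\rangle$ equals $\nabla_{\nabla\mu}u^\flat+\dd\langle u,\nabla\mu\rangle$ minus a zero-order term in $u$. Applying $\dd$ kills the exact part and leaves $\dd\nabla_{\nabla\mu}u^\flat=\nabla_{\nabla\mu}\dd u$ modulo first order. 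This yields the term $-\varrho^{-1}\nabla_{\nabla\mu}(\dd u)$ in \eqref{eq:hodge-S}.

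The main obstacle is bookkeeping the second-order contributions of $\delta\cB$ and $\dd\cB$ correctly. This requires commuting covariant derivatives and using \eqref{eq:weitz} to convert $\DB$ into $\DHo$, so that all second-order terms appear only through $\dd\delta u$, $\delta\dd u$, $\nabla_{\nabla\mu}\dd u$ and the wave operators. I would verify the coefficients first in flat space with constant $\varrho$, where the commutators vanish, and then argue that curvature corrections are of first order. The distributional statement follows because both sides are differential operators with smooth coefficients, and the identity for smooth $u$ transfers by duality.
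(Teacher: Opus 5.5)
Your proposal is correct and follows essentially the same route as the paper. Both apply $\delta$ and $\dd$ to \eqref{eq:P-hodge}, use the product rules together with $\dd^2=\delta^2=0$ and the time-independence of the coefficients, and read off the second-order parts of $\delta\cB u$ and $\dd\cB u$ from the three pieces of $\cB$, with \eqref{eq:weitz} converting $\DB$ into $\DHo$. The one minor variation is the $1$-form $\gamma=\langle\nabla_{\cdot}u,\nabla\mu\rangle$: you write it as $\dd\langle u,\nabla\mu\rangle-\nabla^2\mu(u,\cdot)$ and let $\dd^2=0$ show that $\dd\gamma$ is first order, whereas the paper computes $\dd\gamma$ directly through the curvature identity; both arguments are valid.
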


\begin{proof}
Write $y=\delta u$ and $w=\dd u$. By \eqref{eq:P-hodge}, $\PP u=\partial_t^2u+c_P^2\,\dd y+c_S^2\,\delta w+\mathcal Qu$. In this proof, the symbols $\mathcal M$ and $\mathcal M'$ denote first-order differential operators in $u$ with smooth coefficients which may change from line to line. Since the coefficients do not depend on $t$, $\delta$ and $\dd$ commute with $\partial_t^2$. Since $\delta\dd y=\DHo y$ for the function $y$, $\delta\delta w=0$ and $\dd w=0$, the product rules \eqref{eq:product} give
\[
\delta(c_P^2\dd y)=c_P^2\DHo y-\langle\dd c_P^2,\dd y\rangle,\qquad \delta(c_S^2\delta w)=-\langle\dd c_S^2,\delta w\rangle,
\]
\[
\dd(c_P^2\dd y)=\dd c_P^2\wedge\dd y,\qquad \dd(c_S^2\delta w)=c_S^2\DHo w+\dd c_S^2\wedge\delta w .
\]
The terms $\delta(c_S^2\Ric(u))$ and $\dd(c_S^2\Ric(u))$ are of the form $\mathcal Mu$. By \eqref{eq:product}, $\delta(\varrho^{-1}\cB u)=\varrho^{-1}\delta\cB u+\mathcal Mu$ and $\dd(\varrho^{-1}\cB u)=\varrho^{-1}\dd\cB u+\mathcal M'u$, so that it remains to compute the second-order parts of $\delta\cB u=-\dv\cB u$ and of $\dd\cB u$.

Let $X=\nabla\mu$. We compute at a point $x$, in a local orthonormal frame $\{e_i\}$ with $\nabla e_i=0$ at $x$. First,
\[
\dv\bigl((\dv u)\nabla\lambda\bigr)=\langle\nabla\lambda,\nabla\dv u\rangle+(\dv u)\DB\lambda=-\langle\dd\lambda,\dd y\rangle+\mathcal Mu .
\]
Second, since $\nabla_{e_i}(\nabla_Xu)=\nabla^2_{e_i,X}u+\nabla_{\nabla_{e_i}X}u$ and $\nabla^2_{e_i,X}u=\nabla^2_{X,e_i}u+R(e_i,X)u$,
\[
\dv(\nabla_Xu)=\sum_i\langle\nabla^2_{X,e_i}u,e_i\rangle+\Ric(X,u)+\sum_i\langle\nabla_{\nabla_{e_i}X}u,e_i\rangle=X(\dv u)+\mathcal Mu=-\langle\dd\mu,\dd y\rangle+\mathcal Mu .
\]
Third, since $\langle(\nabla u)^\top X,e_i\rangle=\langle\nabla_{e_i}u,X\rangle$, \eqref{eq:weitz} gives
\[
\dv\bigl((\nabla u)^\top X\bigr)=\langle\DB u,X\rangle+\sum_i\langle\nabla_{e_i}u,\nabla_{e_i}X\rangle=-\langle\dd\mu,\dd y+\delta w\rangle+\mathcal Mu,
\]
because $\DB u=-\DHo u+\Ric(u)=-\dd y-\delta w+\Ric(u)$. Adding the three identities,
\[
\delta\cB u=\langle\dd(\lambda+2\mu),\dd y\rangle+\langle\dd\mu,\delta w\rangle+\mathcal Mu .
\]
Collecting terms, we obtain \eqref{eq:hodge-P}.

For the exterior derivative, recall that $\dd\beta(Y,Z)=(\nabla_Y\beta)(Z)-(\nabla_Z\beta)(Y)$ for a $1$-form $\beta$. Since
\[
\dd\bigl((\dv u)\dd\lambda\bigr)=\dd(\dv u)\wedge\dd\lambda=-\dd y\wedge\dd\lambda,
\]
it remains to consider the last two terms of \eqref{eq:B}. For $\beta(Z)=\langle\nabla_Xu,Z\rangle$ we have
\[
(\nabla_Y\beta)(Z)=\langle\nabla^2_{Y,X}u,Z\rangle+\langle\nabla_{\nabla_YX}u,Z\rangle,
\]
and since $\nabla^2_{Y,X}u=\nabla^2_{X,Y}u+R(Y,X)u$,
\[
\dd\beta(Y,Z)=\langle\nabla^2_{X,Y}u,Z\rangle-\langle\nabla^2_{X,Z}u,Y\rangle+(\mathcal Mu)(Y,Z)=(\nabla_Xw)(Y,Z)+(\mathcal Mu)(Y,Z),
\]
because $w(Y,Z)=\langle\nabla_Yu,Z\rangle-\langle\nabla_Zu,Y\rangle$. For $\gamma(Z)=\langle\nabla_Zu,X\rangle$ we have
\[
(\nabla_Y\gamma)(Z)=\langle\nabla^2_{Y,Z}u,X\rangle+\langle\nabla_Zu,\nabla_YX\rangle,
\]
so that
\[
\dd\gamma(Y,Z)=\langle R(Y,Z)u,X\rangle+\langle\nabla_Zu,\nabla_YX\rangle-\langle\nabla_Yu,\nabla_ZX\rangle
\]
is of the form $\mathcal Mu$. Therefore $\dd\cB u=-\dd y\wedge\dd\lambda+\nabla_{\nabla\mu}w+\mathcal Mu$. Collecting terms and using $-\varrho^{-1}(-\dd y\wedge\dd\lambda)=-\varrho^{-1}\dd\lambda\wedge\dd y$, we obtain \eqref{eq:hodge-S}.
\end{proof}

\begin{remark}\label{rem:PS}
Proposition~\ref{prop:hodge} shows that the dilatation $\delta u$ travels with the pressure speed $c_P$ and the vorticity $\dd u$ with the shear speed $c_S$, the couplings between $\delta u$, $\dd u$ and $u$ being of first order. When $\varrho$, $\lambda$ and $\mu$ are constant, \eqref{eq:hodge-P} and \eqref{eq:hodge-S} reduce to
\[
\delta\,\PP u=\bigl(\partial_t^2+c_P^2\DHo\bigr)\delta u-2c_S^2\,\delta\bigl(\Ric(u)\bigr),\qquad \dd\,\PP u=\bigl(\partial_t^2+c_S^2\DHo\bigr)\dd u-2c_S^2\,\dd\bigl(\Ric(u)\bigr).
\]
In flat space $\Ric=0$ and the decoupling is exact, which recovers the classical equations for $\dv u$ and $\operatorname{curl}u$ recalled in \cite[Section~2.1]{BSMS23}. In the Euclidean case with variable coefficients, the first-order couplings are those of the principally diagonal system used in \cite{EINT02,IIY03}.
\end{remark}

\subsection{Green's formula, Korn's inequality and the elastic operator}

We introduce the bilinear form
\begin{equation}\label{eq:form-a}
a(u,v)=\int_M\bigl(\lambda\,\dv u\,\dv v+2\mu\langle\Def u,\Def v\rangle\bigr)\,\dd v,\qquad u,v\in H^1_0(M;TM),
\end{equation}
and the weighted inner product $(u,v)_\varrho=\int_M\varrho\langle u,v\rangle\,\dd v$ on $L^2(M;TM)$, whose norm $\|\cdot\|_\varrho$ is equivalent to $\|\cdot\|$.

\begin{lemma}\label{lem:green}
For $u\in H^2(M;TM)$ and $v\in H^1_0(M;TM)$ we have $(-\Lam u,v)=a(u,v)$.
\end{lemma}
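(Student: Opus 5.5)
The plan is to integrate by parts directly on the divergence form $\Lam u=\dv\sigma(u)$ from \eqref{eq:Lame-def}. I will first establish the identity for smooth fields and then pass to the stated regularity by density.

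For a symmetric $(0,2)$-tensor $T$ of class $C^1$ and a vector field $v$ of class $C^1$, I will use the pointwise product rule
\[
\dv\bigl(T(v,\cdot)\bigr)=\langle\dv T,v\rangle+\langle T,\nabla v\rangle .
\]
Here $T(v,\cdot)$ is regarded as a vector field through $g$. The rule is checked in a local orthonormal frame with $\nabla e_i=0$ at the point, where $\sum_i e_i\bigl(T(v,e_i)\bigr)=\sum_i(\nabla_{e_i}T)(v,e_i)+\sum_iT(\nabla_{e_i}v,e_i)$ and symmetry of $T$ gives $(\dv T)(v)$.

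Applying the divergence theorem on $M$ yields
\[
(\dv T,v)=-\int_M\langle T,\nabla v\rangle\,\dd v+\int_{\partial M}T(v,\nu)\,\dd s .
\]
The boundary term vanishes when $v|_{\partial M}=0$. Because $T$ is symmetric, $\langle T,\nabla v\rangle=\langle T,S_v\rangle=\langle T,\Def v\rangle$. Taking $T=\sigma(u)$ and using $\langle g,\Def v\rangle=\tr\Def v=\dv v$, I obtain
\[
\langle\sigma(u),\Def v\rangle=\lambda\,\dv u\,\dv v+2\mu\langle\Def u,\Def v\rangle .
\]
Therefore $(-\Lam u,v)=a(u,v)$ for $u\in C^2$ and $v\in C^1$ vanishing on $\partial M$.

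For the general case, I take $u\in H^2(M;TM)$ and $v\in H^1_0(M;TM)$ and approximate. Choose $u_k\in C^\infty(M;TM)$ with $u_k\to u$ in $H^2$; this is possible since $M$ is compact with smooth boundary. Choose $v_k\in C^\infty_c(\inte M;TM)$ with $v_k\to v$ in $H^1$, by the definition of $H^1_0$. The smooth case covers these pairs because $v_k$ is compactly supported, so no trace argument is needed.

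Passing to the limit uses two continuity properties. First, $\Lam:H^2\to L^2$ is continuous, because $\Lam$ is a second-order operator with smooth coefficients. Second, $a$ is continuous on $H^1\times H^1$, since $\lambda$ and $\mu$ are bounded and $\dv$ and $\Def$ are bounded from $H^1$ to $L^2$. Both sides of the identity therefore converge.

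I do not expect a genuine obstacle. The only points needing care are the symmetry of $\sigma(u)$, which replaces $\nabla v$ by $\Def v$, and the sign conventions in the divergence theorem; both are settled by the frame computation above.
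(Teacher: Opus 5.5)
Your proposal is correct and matches the paper's proof. The paper also applies the divergence theorem to the field $X$ with $\langle X,Y\rangle=\sigma(u)(v,Y)$, uses the symmetry of $\sigma(u)$ to get $\dv X=\langle\Lam u,v\rangle+\langle\sigma(u),\Def v\rangle$, and reduces by density to smooth $v$ with compact support in $\inte M$; your additional smoothing of $u$ is harmless but unnecessary, since for $u\in H^2$ the field $X$ is already in $H^1$ with compact support.
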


\begin{proof}
By density we may assume that $v$ is smooth with compact support in $\inte M$. Let $X$ be the vector field defined by $\langle X,Y\rangle=\sigma(u)(v,Y)$. Since $\sigma(u)$ is symmetric, a computation in an orthonormal frame gives
\[
\dv X=(\dv\sigma(u))(v)+\sum_i\sigma(u)(\nabla_{e_i}v,e_i)=\langle\Lam u,v\rangle+\langle\sigma(u),\Def v\rangle .
\]
The divergence theorem yields $\int_M\dv X\,\dd v=0$, and
\[
\langle\sigma(u),\Def v\rangle=\lambda\dv u\dv v+2\mu\langle\Def u,\Def v\rangle,
\]
because $\tr\Def v=\dv v$.
\end{proof}

\begin{lemma}\label{lem:korn}
Assume \Lh{}. Then the following assertions hold.
\begin{enumerate}[label=(\roman*)]
\item For every $u\in H^1_0(M;TM)$,
\begin{equation}\label{eq:korn-identity}
\|\nabla u\|^2=2\|\Def u\|^2-\|\dv u\|^2+\int_M\Ric(u,u)\,\dd v .
\end{equation}
\item There exists $C_K>0$ such that $\|\nabla u\|^2\le C_K\|\Def u\|^2$ for every $u\in H^1_0(M;TM)$.
\item Let $c_0=\min_M\min\{2\mu,\,n\lambda+2\mu\}>0$. For every symmetric endomorphism $E$ of $T_xM$, $x\in M$,
\begin{equation}\label{eq:pointwise}
\lambda(x)(\tr E)^2+2\mu(x)|E|^2\ge c_0|E|^2 ,
\end{equation}
and, for every $u\in H^1_0(M;TM)$,
\begin{equation}\label{eq:coercive}
a(u,u)\ge c_0\|\Def u\|^2\ge \frac{c_0}{C_K}\|\nabla u\|^2 .
\end{equation}
In particular, $a$ is an inner product on $H^1_0(M;TM)$ equivalent to the standard one, and
\begin{equation}\label{eq:lambda1}
\lambda_1=\inf_{u\in H^1_0\setminus\{0\}}\frac{a(u,u)}{\|u\|_\varrho^2}>0 .
\end{equation}
\end{enumerate}
\end{lemma}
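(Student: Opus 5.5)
For (i), I will first prove the identity for smooth $u$ with compact support in $\inte M$ and then pass to $H^1_0(M;TM)$ by density. Both sides of \eqref{eq:korn-identity} are continuous in the $H^1$ norm, so the density step is routine.

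\emph{Part (i).} I start from the pointwise expansion
\[
2|\Def u|^2=\tfrac12|S_u+S_u^\top|^2=|\nabla u|^2+\langle S_u,S_u^\top\rangle ,
\]
which is just $|S_u|^2=|S_u^\top|^2=|\nabla u|^2$ together with $\langle S_u^\top,S_u\rangle=\sum_{i,j}\langle\nabla_{e_i}u,e_j\rangle\langle\nabla_{e_j}u,e_i\rangle$. Integrating, the identity reduces to showing
\[
\int_M\langle S_u,S_u^\top\rangle\,\dd v=\|\dv u\|^2-\int_M\Ric(u,u)\,\dd v .
\]
To get this, I integrate by parts with the vector field $Z$ defined by $\langle Z,Y\rangle=S_u^\top(u,Y)=\langle\nabla_Y u,u\rangle$. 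The frame computation used in Lemma~\ref{lem:green} gives
\[
\dv Z=(\dv S_u^\top)(u)+\langle S_u^\top,S_u\rangle .
\]
By \eqref{eq:ricci-id}, $(\dv S_u^\top)(u)=\langle\nabla\dv u,u\rangle+\Ric(u,u)$. The boundary terms vanish by compact support, so integrating and applying the divergence theorem yields
\[
\int\langle S_u^\top,S_u\rangle=-\int\langle\nabla\dv u,u\rangle-\int\Ric(u,u)=\|\dv u\|^2-\int\Ric(u,u).
\]
Here the last step uses $\int\langle\nabla\dv u,u\rangle=-\int(\dv u)^2$, again by the divergence theorem. Substituting back gives \eqref{eq:korn-identity}.

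\emph{Part (ii).} This is the main obstacle, because the curvature term in \eqref{eq:korn-identity} has no sign and so (i) does not give (ii) directly. From (i) and $(\dv u)^2=(\tr\Def u)^2\ge0$ I get
\[
\|\nabla u\|^2\le 2\|\Def u\|^2+K\|u\|^2,\qquad K=\max_M|\Ric| .
\]
I then remove the $L^2$ term by a compactness--contradiction argument. Suppose $u_k\in H^1_0$ satisfy $\|\nabla u_k\|=1$ and $\|\Def u_k\|\to0$. The displayed inequality and a Poincaré inequality on $H^1_0$ show that $(u_k)$ is bounded in $H^1$. By Rellich, a subsequence converges in $L^2$ and weakly in $H^1$ to some $u$ with $\Def u=0$. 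Applying the displayed inequality to $u_k-u_l$ shows the sequence is Cauchy in $H^1$, so in fact $u_k\to u$ in $H^1$ and $\|\nabla u\|=1$.

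On the other hand, $\Def u=0$ means $u$ is a Killing field; it is smooth by ellipticity of $\Def^*\Def$. A Killing field is determined by its value and covariant derivative at one point. On $\partial M$ we have $u=0$, and $\nabla u$ is skew with vanishing tangential derivatives there. From this I expect $\nabla u|_{\partial M}=0$: the normal column equals minus the tangential row, which is zero. Connectedness of $M$ then gives $u\equiv0$, contradicting $\|\nabla u\|=1$.

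\emph{Part (iii).} For the pointwise bound, I split $E=E_0+\tfrac{\tr E}{n}I$ into its trace-free part and its trace part. Then
\[
\lambda(\tr E)^2+2\mu|E|^2=2\mu|E_0|^2+\tfrac{n\lambda+2\mu}{n}(\tr E)^2 .
\]
This is at least $\min\{2\mu,n\lambda+2\mu\}\bigl(|E_0|^2+(\tr E)^2/n\bigr)$, which equals $\min\{2\mu,n\lambda+2\mu\}\,|E|^2\ge c_0|E|^2$, proving \eqref{eq:pointwise}.

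Applying \eqref{eq:pointwise} with $E=\Def u$ and integrating gives the first inequality in \eqref{eq:coercive}; the second follows from (ii). Hence $a$ is an inner product on $H^1_0$. Its norm is equivalent to the standard one by boundedness of the coefficients and Poincaré. Finally, $\lambda_1>0$ follows from $a(u,u)\ge c\|\nabla u\|^2\ge c'\|u\|^2_\varrho$, combining Poincaré with the equivalence of $\|\cdot\|_\varrho$ and $\|\cdot\|$.
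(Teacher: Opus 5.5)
\textbf{Part (i) has a concrete error.} With $\langle Z,Y\rangle=S_u^\top(u,Y)=\langle\nabla_Yu,u\rangle$ you have $Z=\tfrac12\nabla|u|^2$. The frame computation then gives $\dv Z=\langle\DB u,u\rangle+|\nabla u|^2$, which is the Bochner formula, and not $(\dv S_u^\top)(u)+\langle S_u^\top,S_u\rangle$. The computation of Lemma~\ref{lem:green} can be quoted only because $\sigma(u)$ is symmetric. Since $S_u^\top$ is not symmetric, it matters in which slot $u$ is inserted. For instance, on a planar region where $u=(-x_2,x_1)$, the true divergence is $2$ and your formula gives $-2$.

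The fix is to insert $u$ in the other slot. Take $Z=\nabla_uu$, i.e.\ $\langle Z,Y\rangle=S_u^\top(Y,u)=S_u(u,Y)$. At a point where $\nabla e_i=0$,
\[
\dv Z=\sum_i\langle\nabla^2_{e_i,u}u,e_i\rangle+\sum_i\langle\nabla_{\nabla_{e_i}u}u,e_i\rangle=(\dv S_u^\top)(u)+\langle S_u,S_u^\top\rangle .
\]
From there your argument goes through unchanged. This is exactly the paper's integration by parts $\int\nabla_iu_j\nabla^ju^i=-\int u_j\nabla_i\nabla^ju^i$.

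\textbf{Part (ii) is correct and takes a genuinely different route.} The paper extends $u$ by zero to a collar and cites the Riemannian Korn inequality of Chen--Jost for fields vanishing on a set of Hausdorff dimension larger than $n-2$. It needs an orientation double cover because Chen--Jost assume orientability. You instead combine $\|\nabla u\|^2\le2\|\Def u\|^2+K\|u\|^2$ from (i) with a Rellich compactness--contradiction argument and the rigidity of Killing fields. This is self-contained and needs no orientability; in both routes the constant is non-explicit.

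One step needs more justification. Ellipticity of $\Def^*\Def$ gives smoothness of the limit $u$ only in $\inte M$, whereas your boundary computation ($\nabla u=0$ on $\partial M$) needs $u\in C^1$ up to $\partial M$. This does follow from Dirichlet regularity for this strongly elliptic system, whose G\aa rding inequality on $H^1_0$ is precisely (i).

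A simpler option is to extend $u$ by zero to a collar, as the paper does. The extension $\tilde u$ still satisfies $\Def\tilde u=0$ weakly, so it is a smooth Killing field on a connected open manifold. It vanishes on an open set, hence $\tilde u\equiv0$, with no boundary computation needed.

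\textbf{Part (iii)} coincides with the paper's argument.
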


\begin{proof}
(i) Both sides of \eqref{eq:korn-identity} are continuous on $H^1_0$, so we may assume that $u$ is smooth with compact support in $\inte M$. Since $|\Def u|^2=\tfrac12|\nabla u|^2+\tfrac12\langle S_u,S_u^\top\rangle$, it suffices to show that
\[
\int_M\langle S_u,S_u^\top\rangle\,\dd v=\|\dv u\|^2-\int_M\Ric(u,u)\,\dd v .
\]
In index notation, two integrations by parts without boundary terms and \eqref{eq:ricci-id} give
\[
\begin{aligned}
\int_M\nabla_iu_j\,\nabla^ju^i\,\dd v&=-\int_Mu_j\,\nabla_i\nabla^ju^i\,\dd v=-\int_Mu_j\bigl(\nabla^j\dv u+R^{j}{}_{k}u^k\bigr)\dd v\\
&=\|\dv u\|^2-\int_M\Ric(u,u)\,\dd v .
\end{aligned}
\]

(ii) Extend $(M,g)$ across $\partial M$ by a collar to a Riemannian manifold $(\widetilde M,\tilde g)$, and let $\Omega\subset\widetilde M$ be a connected open set with smooth boundary such that $M\subset\Omega$ and $\Omega\setminus M$ has nonempty interior. The extension $\tilde u$ of $u\in H^1_0(M;TM)$ by zero belongs to $H^1(\Omega;T\widetilde M)$, satisfies $\mathcal L_{\tilde u}\tilde g=2\Def\tilde u$, and vanishes on the open set $\Omega\setminus M$, whose Hausdorff dimension is larger than $n-2$. Hence Chen and Jost \cite[Theorems~1.3 and~5.3]{CJ02} provide a constant $C$ with
\[
\|\nabla\tilde u\|_{L^2(\Omega)}^2\le C\|\mathcal L_{\tilde u}\tilde g\|_{L^2(\Omega)}^2,
\]
which is (ii) with $C_K=4C$. The manifolds of \cite{CJ02} are oriented, and orientability is used in the proof of \cite[Theorem~1.1]{CJ02}. If $\Omega$ is not orientable, let $\pi\colon\widehat M\to\widetilde M$ be the orientation double cover of $\widetilde M$, endowed with the pulled-back metric, for which $\pi$ is a local isometry, and let $\widehat\Omega=\pi^{-1}(\Omega)$, which is an open subset of the oriented manifold $\widehat M$ with smooth boundary, connected because $\Omega$ is not orientable. The lift $\hat u$ of $\tilde u$, defined by $\dd\pi(\hat u(p))=\tilde u(\pi(p))$, belongs to $H^1(\widehat\Omega;T\widehat M)$, vanishes on the open set $\pi^{-1}(\Omega\setminus M)$ and satisfies $\mathcal L_{\hat u}\pi^*\tilde g=\pi^*(\mathcal L_{\tilde u}\tilde g)$. The inequality above holds on $\widehat\Omega$, and each of its two sides equals twice the corresponding integral over $\Omega$, so that it holds on $\Omega$ as well.

(iii) Let $E_0=E-\tfrac1n(\tr E)\,\mathrm{Id}$ be the trace-free part of $E$. Since $|E|^2=|E_0|^2+\tfrac1n(\tr E)^2$,
\[
\lambda(\tr E)^2+2\mu|E|^2=2\mu|E_0|^2+\frac{n\lambda+2\mu}{n}(\tr E)^2\ge c_0|E|^2 .
\]
Applying \eqref{eq:pointwise} with $E=\Def u$ and using (ii) we obtain \eqref{eq:coercive}. The Poincar\'e inequality completes the proof.
\end{proof}

Let $A=-\varrho^{-1}\Lam$ with domain $D(A)=H^2(M;TM)\cap H^1_0(M;TM)$, regarded as an operator in the Hilbert space $L^2_\varrho=(L^2(M;TM),(\cdot,\cdot)_\varrho)$.

\begin{lemma}\label{lem:operatorA}
Assume \Lh{}. Then $A$ is a positive self-adjoint operator in $L^2_\varrho$ with compact resolvent, $D(A^{1/2})=H^1_0(M;TM)$ and $\|A^{1/2}u\|_\varrho^2=a(u,u)$. Moreover, there exists $C>0$ such that $\|u\|_{H^2}\le C\|Au\|$ for every $u\in D(A)$.
\end{lemma}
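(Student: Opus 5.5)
The plan is to build $A$ from the form $a$ by the Friedrichs (Lax--Milgram) construction, and to identify its domain by elliptic regularity for the Dirichlet problem of the Lamé system.

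\emph{Step 1: the form operator.} By Lemma~\ref{lem:korn}(iii), $a$ is a symmetric, bounded and coercive form on $H^1_0(M;TM)$. Moreover, $H^1_0$ is dense in $L^2_\varrho$ and compactly embedded in it by Rellich's theorem. The form-representation theorem then gives a positive self-adjoint operator $\tilde A$ in $L^2_\varrho$. It is characterized by $(\tilde Au,v)_\varrho=a(u,v)$ for all $v\in H^1_0$. It satisfies $D(\tilde A^{1/2})=H^1_0$ and $\|\tilde A^{1/2}u\|_\varrho^2=a(u,u)$. Its resolvent is compact, since $\tilde A^{-1}$ maps $L^2_\varrho$ boundedly into $H^1_0$. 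Positivity follows from \eqref{eq:lambda1}: the spectrum lies in $[\lambda_1,\infty)$.

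\emph{Step 2: $A\subset\tilde A$.} Let $u\in H^2\cap H^1_0$. For every $v\in H^1_0$, Lemma~\ref{lem:green} gives
\[
(-\varrho^{-1}\Lam u,v)_\varrho=(-\Lam u,v)=a(u,v).
\]
Hence $u\in D(\tilde A)$ and $\tilde Au=Au$.

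\emph{Step 3: $D(\tilde A)\subset H^2$, the main step.} Let $u\in D(\tilde A)$ and set $f=\varrho\tilde Au\in L^2$. Then $u\in H^1_0$ is a weak solution of $-\Lam u=f$ with Dirichlet data. We need $u\in H^2$ and $\|u\|_{H^2}\le C(\|f\|+\|u\|)$.
\begin{itemize}
\item By \eqref{eq:form-B}, the principal symbol of $-\Lam$ at $\xi$ is $\mu|\xi|^2\mathrm{Id}+(\lambda+\mu)\xi\otimes\xi$. Its eigenvalues are $\mu|\xi|^2$ and $(\lambda+2\mu)|\xi|^2$, both positive by \eqref{eq:lambda-mu}. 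So the system is strongly elliptic, i.e.\ it satisfies the Legendre--Hadamard condition.
\item The Dirichlet condition covers this operator: strongly elliptic systems with Dirichlet data satisfy the complementing condition.
\item Regularity can therefore be obtained by Nirenberg's difference-quotient method. Localize in coordinate charts and flatten the boundary. Take difference quotients in the tangential directions, using the coercivity (Gårding) supplied by Lemma~\ref{lem:korn}. This controls all second derivatives except $\partial_n^2u$.
\item Recover $\partial_n^2u$ from the equation itself. This is possible because the coefficient matrix of $\partial_n^2$, namely $\mu\,\mathrm{Id}+(\lambda+\mu)\nu\otimes\nu$, is invertible.
\item Alternatively, quote the Agmon--Douglis--Nirenberg estimates. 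They transfer to manifolds through charts, since all coefficients are smooth.
\end{itemize}
The result of Step~3 is $D(\tilde A)=D(A)$, so $A=\tilde A$ is self-adjoint.

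\emph{Step 4: the $H^2$ bound.} The term $\|u\|$ in the regularity estimate of Step~3 is removed using $\|u\|\le C\|Au\|$. This holds because $\lambda_1>0$, so $A^{-1}$ is bounded. Combining the two gives $\|u\|_{H^2}\le C\|Au\|$.

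I expect Step~3 to be the only substantial point: verifying the boundary regularity for the variable-coefficient Lamé system on a manifold. The remaining steps are standard.
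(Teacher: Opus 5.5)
Your proposal is correct and follows essentially the same route as the paper. Both build the form operator of $a$, obtain $A\subset\tilde A$ from Lemma~\ref{lem:green}, prove $D(\tilde A)\subset H^2$ from the strong ellipticity of $-\Lam$ and Dirichlet regularity, and then drop $\|u\|$ using $\lambda_1>0$. The only difference is that the paper cites Taylor's results on regular elliptic boundary problems for systems on vector bundles, where you sketch Nirenberg's difference-quotient argument or invoke Agmon--Douglis--Nirenberg.
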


\begin{proof}
By Lemma~\ref{lem:korn}, $a$ is a closed, symmetric and coercive form on $H^1_0$, densely defined in $L^2_\varrho$. Let $A_a$ be the associated positive self-adjoint operator in $L^2_\varrho$, so that $D(A_a^{1/2})=H^1_0$ and $(A_au,v)_\varrho=a(u,v)$. By Lemma~\ref{lem:green}, $(Au,v)_\varrho=(-\Lam u,v)=a(u,v)$ for $u\in D(A)$ and $v\in H^1_0$, hence $A\subset A_a$. Conversely, if $u\in D(A_a)$, then $u\in H^1_0$ is a weak solution of $-\Lam u=\varrho A_au$ with homogeneous Dirichlet condition. The principal symbol of $-\Lam$ is $\mu|\xi|^2\mathrm{Id}+(\lambda+\mu)\,\xi\otimes\xi$, and by \eqref{eq:lambda-mu}
\[
\mu|\xi|^2|w|^2+(\lambda+\mu)\langle\xi,w\rangle^2\ge\Bigl(\min_M\mu\Bigr)|\xi|^2|w|^2,
\]
so that $-\Lam$ is strongly elliptic. Hence the Dirichlet problem is a regular elliptic boundary problem \cite[Chapter~5, Proposition~11.10]{Tay23}, and the regularity results \cite[Chapter~5, Propositions~11.14 and~11.15]{Tay23}, which apply to systems acting on sections of a vector bundle, give the following statement, used repeatedly below. If $u\in H^1_0(M;TM)$ is a weak solution of $-\Lam u=F$ with $F\in H^k(M;TM)$, $k\ge0$, then $u\in H^{k+2}(M;TM)$ and $\|u\|_{H^{k+2}}\le C(\|F\|_{H^k}+\|u\|)$. For $k=0$ this gives $u\in H^2$ together with the estimate $\|u\|_{H^2}\le C(\|A_au\|+\|u\|)\le C'\|A_au\|$. Thus $A=A_a$. The compactness of the resolvent follows from the compact embedding $H^1_0\subset L^2$.
\end{proof}

\subsection{Elliptic estimates for compactly supported fields}

\begin{lemma}\label{lem:gaffney}
There exist constants $C_1,C_2>0$ such that, for every $u\in H^1(M;TM)$ with compact support in $\inte M$,
\begin{equation}\label{eq:gaffney1}
\|\nabla u\|^2\le\|\dd u\|^2+\|\delta u\|^2+C_1\|u\|^2,
\end{equation}
and, if moreover $u\in H^2(M;TM)$,
\begin{equation}\label{eq:gaffney2}
\|\nabla^2u\|^2\le\|\DB u\|^2+C_2\|u\|_{H^1}^2\le C_2\bigl(\|\nabla\dd u\|^2+\|\nabla\delta u\|^2+\|u\|_{H^1}^2\bigr).
\end{equation}
\end{lemma}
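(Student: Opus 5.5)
Both inequalities will come from integrating Weitzenböck-type identities against the compactly supported field. Since $u$ vanishes near $\partial M$, no boundary terms appear. By density, with $C_1,C_2$ depending only on $(M,g)$, we may assume that $u$ is smooth with compact support in $\inte M$.

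For \eqref{eq:gaffney1}, integrate \eqref{eq:weitz} against $u$. Since $\delta$ is the formal adjoint of $\dd$ and $u$ has compact support,
\[
(\DHo u,u)=\|\dd u\|^2+\|\delta u\|^2,\qquad (-\DB u,u)=\|\nabla u\|^2 .
\]
Hence
\[
\|\nabla u\|^2=\|\dd u\|^2+\|\delta u\|^2-\int_M\Ric(u,u)\,\dd v\le\|\dd u\|^2+\|\delta u\|^2+C_1\|u\|^2,
\]
with $C_1=\max_M|\Ric|$. This is an identity up to the curvature term, so it is routine.

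For the first inequality in \eqref{eq:gaffney2}, I would prove the Bochner-type identity $\|\nabla^2u\|^2=\|\DB u\|^2+(\text{lower order})$ by two integrations by parts in index notation. Writing $\nabla^2u$ as $\nabla_i\nabla_ju_k$, integrate by parts in $\nabla_i$, commute $\nabla_i$ past $\nabla_j$ (producing curvature times $\nabla u$, plus $\nabla R$ times $u$), and then integrate by parts in $\nabla_j$. This gives
\[
\int_M\nabla_i\nabla_ju_k\,\nabla^i\nabla^ju^k\,\dd v=\int_M\nabla^j\nabla_j u_k\,\nabla^i\nabla_i u^k\,\dd v+\int_M\bigl(R*\nabla u*\nabla u+\nabla R*u*\nabla u\bigr)\dd v .
\]
A cleaner way to organise the commutation is to apply the Weitzenböck identity on $1$-forms to the $1$-forms $\alpha_k=\nabla u_k$ and use compact support. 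The error terms are bounded by $C\|u\|_{H^1}^2$. This commutation bookkeeping is the main (though standard) point; the only care needed is that every error term contains at most one second derivative paired against a first derivative. Such terms either integrate by parts again or, as here, come with curvature only. Indeed, the commutator $[\nabla_i,\nabla_j]$ acting on $\nabla u$ produces only zero-order curvature terms applied to $\nabla u$, so the errors are $O(\|u\|_{H^1}^2)$ with no second-derivative residue. If such a residue did appear, I would absorb it by Young's inequality, giving $\|\nabla^2u\|^2\le 2\|\DB u\|^2+C\|u\|_{H^1}^2$, which suffices for the second inequality.

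For the second inequality in \eqref{eq:gaffney2}, use \eqref{eq:weitz} again: $\DB u=-\dd\delta u-\delta\dd u+\Ric(u)$. Pointwise, $|\dd\delta u|\le|\nabla\delta u|$ and $|\delta\dd u|\le C|\nabla\dd u|$, since $\delta$ on $2$-forms is a contraction of $\nabla$. Therefore
\[
\|\DB u\|^2\le C\bigl(\|\nabla\dd u\|^2+\|\nabla\delta u\|^2+\|u\|^2\bigr),
\]
and enlarging $C_2$ gives the stated bound.
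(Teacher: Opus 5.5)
Your proposal is correct and follows the paper's proof essentially step for step. You obtain \eqref{eq:gaffney1} by pairing \eqref{eq:weitz} with $u$, the first inequality in \eqref{eq:gaffney2} by two integrations by parts with the Ricci identity $\nabla^i\nabla_i\nabla_ju_k=\nabla_j\nabla^i\nabla_iu_k+(R*\nabla u+\nabla R*u)_{jk}$ in between, and the second inequality from $\DB u=-\dd\delta u-\delta\dd u+\Ric(u)$. Since this commutator leaves no second-order residue, the coefficient of $\|\DB u\|^2$ is exactly $1$ as stated. Your Young-inequality fallback would only give $2\|\DB u\|^2$, which is not the stated inequality, but it is never needed.
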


\begin{proof}
By density we may assume that $u$ is smooth with compact support in $\inte M$. Integrating \eqref{eq:weitz} against $u$, we get
\[
\|\dd u\|^2+\|\delta u\|^2=(\DHo u,u)=(-\DB u,u)+\int_M\Ric(u,u)\,\dd v=\|\nabla u\|^2+\int_M\Ric(u,u)\,\dd v,
\]
which gives \eqref{eq:gaffney1} with $C_1=\max_M\|\Ric\|$. For \eqref{eq:gaffney2}, the Ricci identities give, in index notation,
\[
\nabla^i\nabla_i\nabla_ju_k=\nabla_j\nabla^i\nabla_iu_k+\mathcal E_{jk},\qquad |\mathcal E|\le C\bigl(|\nabla u|+|u|\bigr),
\]
where $\mathcal E$ is a sum of contractions of the curvature tensor with $\nabla u$ and of its covariant derivative with $u$. Integrating by parts twice, without boundary terms,
\[
\|\nabla^2u\|^2=-\int_M\nabla_ju_k\,\nabla^i\nabla_i\nabla^ju^k\,\dd v=\|\DB u\|^2-\int_M\nabla^ju^k\,\mathcal E_{jk}\,\dd v\le\|\DB u\|^2+C\|u\|_{H^1}^2 .
\]
Finally, by \eqref{eq:weitz}, $\DB u=-\dd\delta u-\delta\dd u+\Ric(u)$. Since $\dd\delta u$ is the gradient of $\delta u$ and $\delta\dd u$ is a contraction of $\nabla\dd u$, we obtain $\|\DB u\|\le C(\|\nabla\delta u\|+\|\nabla\dd u\|+\|u\|)$, and \eqref{eq:gaffney2} follows.
\end{proof}

The next estimate uses the Hodge form \eqref{eq:form-PS} in an essential way.

\begin{lemma}\label{lem:interior-elliptic}
Assume \Lh{}. There exists $C>0$ such that $\|\nabla^2u\|\le C\bigl(\|\Lam u\|+\|u\|_{H^1}\bigr)$ for every $u\in H^2(M;TM)$ with compact support in $\inte M$.
\end{lemma}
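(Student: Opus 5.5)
By \eqref{eq:gaffney2} of Lemma~\ref{lem:gaffney}, it suffices to bound $\|\nabla\delta u\|$ and $\|\nabla\dd u\|$ by $C(\|\Lam u\|+\|u\|_{H^1})$. By density I would take $u$ smooth with compact support in $\inte M$. The Hodge form \eqref{eq:form-PS} splits $\Lam u$ into a pressure part and a shear part:
\[
(\lambda+2\mu)\,\dd\delta u+\mu\,\delta\dd u=-\Lam u+2\mu\Ric(u)+\cB u=:F .
\]
Since $\cB$ is first order, $\|F\|\le C(\|\Lam u\|+\|u\|_{H^1})$. Dividing by the positive functions $\lambda+2\mu$ and $\mu$, write $p=\lambda+2\mu$ and $G=p^{-1}F$. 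Then
\[
\dd y+\kappa\,\delta w=G,\qquad \kappa=\mu/(\lambda+2\mu),\qquad y=\delta u,\quad w=\dd u .
\]

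The key step is that $\dd y$ and $\delta w$ are almost $L^2$-orthogonal, because $\delta\delta w=0$. Indeed,
\[
(\dd y,\kappa\delta w)=(y,\delta(\kappa\delta w))=-(y,\langle\dd\kappa,\delta w\rangle),
\]
using \eqref{eq:product} and compact support, so there are no boundary terms. Expanding $\|G\|^2=\|\dd y\|^2+\|\kappa\delta w\|^2+2(\dd y,\kappa\delta w)$ gives
\[
\|\dd y\|^2+\|\kappa\delta w\|^2\le\|G\|^2+C\|y\|\,\|\delta w\|\le\|G\|^2+\varepsilon\|\delta w\|^2+C_\varepsilon\|u\|_{H^1}^2 .
\]
Since $\kappa\ge\min\kappa>0$, the term $\varepsilon\|\delta w\|^2$ is absorbed. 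Hence $\|\nabla\delta u\|=\|\dd y\|$ and $\|\delta\dd u\|$ are both bounded by $C(\|\Lam u\|+\|u\|_{H^1})$.

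It remains to pass from $\|\delta w\|$ to $\|\nabla w\|$ for the compactly supported $2$-form $w$, which satisfies $\dd w=0$. I would use the Gaffney-type inequality for forms, proved exactly as \eqref{eq:gaffney1} via the Weitzenb\"ock formula for $2$-forms:
\[
\|\nabla w\|^2\le\|\dd w\|^2+\|\delta w\|^2+C\|w\|^2 .
\]
Since $\dd w=0$ and $\|w\|\le C\|u\|_{H^1}$, this gives $\|\nabla\dd u\|\le C(\|\Lam u\|+\|u\|_{H^1})$. Combining this with the bound on $\|\nabla\delta u\|$ in \eqref{eq:gaffney2} concludes the proof.

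I expect the main obstacle to be the cross term in the orthogonality step, where the variable ratio $\kappa$ spoils exact orthogonality; the $\varepsilon$-absorption above should handle it. As a fallback, one could instead obtain the estimate from strong ellipticity of $-\Lam$, established in the proof of Lemma~\ref{lem:operatorA}, through interior elliptic regularity, but the Hodge argument is self-contained.
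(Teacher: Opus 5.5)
Your proof is correct and follows essentially the same route as the paper: the form \eqref{eq:form-PS} with one coefficient normalized, near-orthogonality of $\dd\delta u$ and $\delta\dd u$ up to a cross term produced by the variable ratio of the moduli, and then \eqref{eq:gaffney2}. The only difference is that the paper uses the first inequality of \eqref{eq:gaffney2} together with $\|\DB u\|\le\|\DHo u\|+C\|u\|$ and the exact identity $\|\DHo u\|^2=\|\dd\delta u\|^2+\|\delta\dd u\|^2$, so it never needs $\|\nabla\dd u\|$ and avoids your Gaffney inequality for $2$-forms (which is standard and valid).
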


\begin{proof}
By density we may assume that $u$ is smooth with compact support in $\inte M$. Let $y=\delta u$, $w=\dd u$ and $k=(\lambda+2\mu)/\mu$, so that $k>1$ on $M$ by \eqref{eq:lambda-mu}. By \eqref{eq:form-PS},
\[
k\,\dd y+\delta w=-\mu^{-1}\bigl(\Lam u-2\mu\Ric(u)-\cB u\bigr),\qquad\text{hence}\qquad \|k\,\dd y+\delta w\|\le C\bigl(\|\Lam u\|+\|u\|_{H^1}\bigr).
\]
Since $(\dd y,\delta w)=(\dd\dd y,w)=0$ and, by \eqref{eq:product},
\[
(k\,\dd y,\delta w)=(\dd(ky),\delta w)-(y\,\dd k,\delta w)=-(y\,\dd k,\delta w),
\]
we have
\[
\|\dd y\|^2+\|\delta w\|^2\le\|k\,\dd y\|^2+\|\delta w\|^2=\|k\,\dd y+\delta w\|^2+2(y\,\dd k,\delta w).
\]
Moreover,
\[
\|\DHo u\|^2=\|\dd y+\delta w\|^2=\|\dd y\|^2+\|\delta w\|^2,\qquad |2(y\,\dd k,\delta w)|\le C\|\nabla u\|\,\|\nabla^2u\|,
\]
and $\|\DB u\|\le\|\DHo u\|+C\|u\|$ by \eqref{eq:weitz}. Inserting these relations in the first inequality of \eqref{eq:gaffney2}, we obtain
\[
\|\nabla^2u\|^2\le C\bigl(\|\Lam u\|^2+\|u\|_{H^1}^2\bigr)+C\|\nabla u\|\,\|\nabla^2u\|,
\]
and the Young inequality completes the proof.
\end{proof}

%=====================================================================
\section{Assumptions and well-posedness}\label{sec:wellposed}
%=====================================================================

Let $\omega\subset M$ be an open set, which plays the role of the damping region.
We first state the geometric hypothesis on $\omega$, which is formulated in terms of the ray metrics of the medium. For $c\in\{c_S,c_P\}$, we denote by $g_c=c^{-2}g$ the corresponding ray metric \eqref{eq:speeds} and by $\nabla^2_{g_c}$ the Hessian with respect to $g_c$.

\begin{assumption}[G]\label{ass:G}
The open set $\omega\subset M$ and a function $d\in C^\infty(M)$ satisfy the following conditions, where $K=M\setminus\omega$.
\begin{enumerate}[label=(G\arabic*)]
\item $\partial M\subset\omega$, that is, $K$ is a compact subset of $\inte M$.
\item $d$ has no critical points on $K$ and is strictly convex on $K$ with respect to both ray metrics, namely
\[
\nabla d(x)\ne0\quad\text{and}\quad\nabla^2_{g_S}d(x)(X,X)>0,\quad \nabla^2_{g_P}d(x)(X,X)>0\qquad\text{for } x\in K,\ X\in T_xM\setminus\{0\}.
\]
\end{enumerate}
\end{assumption}

The set $K$ may be empty, in which case $\omega=M$ and (G2) is void, and it need not be connected; (G2) only concerns the restriction of $d$ to $K$. Assumption~\ref{ass:G} is discussed in Section~\ref{sec:geometry}, where we also show that it can be fulfilled with damping regions of arbitrarily small volume. It is not needed for the well-posedness results of this section.

We now state the conditions on the damping, the load and the nonlinearity.
\begin{enumerate}[label=(H\arabic*)]
\item $a\in L^\infty(M)$, $a\ge0$ a.e. in $M$, and there exists $a_0>0$ such that $a\ge a_0$ a.e. in $\omega$.
\item $h\in L^2(M;TM)$.
\item $n=3$ and $F\in C^2(TM)$. For $x\in M$, $f(x,u)\in T_xM$ denotes the gradient of $F(x,\cdot)$ at $u$ with respect to $g_x$, and $\partial_uf(x,u)\in\operatorname{End}(T_xM)$ the corresponding Hessian. There exist $\gamma\in[0,2)$ and $C_f\ge0$ such that
\[
\|\partial_uf(x,u)\|\le C_f\bigl(1+|u|^\gamma\bigr)\qquad\text{for all } (x,u)\in TM.
\]
\item There exist $\kappa\in[0,\lambda_1)$ and $C_F\ge0$ such that, for all $(x,u)\in TM$,
\[
F(x,u)\ge-\frac\kappa2\varrho(x)|u|^2-C_F\qquad\text{and}\qquad \langle f(x,u),u\rangle\ge-\kappa\varrho(x)|u|^2-C_F .
\]
\end{enumerate}

A typical example is $F(x,u)=\frac{1}{\gamma+2}|u|^{\gamma+2}-\frac{m(x)\varrho(x)}{2}|u|^2$ with $\gamma\in[0,2)$ and $m\in C^\infty(M)$ satisfying $\max_Mm<\lambda_1$. Since $M$ is compact, $f_0=\sup_{x\in M}|f(x,0)|$ is finite, and the mean value theorem applied on the segment joining $u$ and $v$ in $T_xM$ gives
\begin{equation}\label{eq:f-lip}
|f(x,u)-f(x,v)|\le C_f\bigl(1+|u|^\gamma+|v|^\gamma\bigr)|u-v|,\qquad |f(x,u)|\le f_0+C_f\bigl(|u|+|u|^{\gamma+1}\bigr).
\end{equation}
The restriction $n=3$ in (H3) enters only through the Sobolev embeddings $H^1_0\subset L^6$, in Lemma~\ref{lem:nemytskii} and in the proof of Lemma~\ref{lem:stab}, and $H^2\subset C(M)$, in Proposition~\ref{prop:reg} and Lemma~\ref{lem:rigidity}. For $n=2$ all the results of Section~\ref{sec:dynamics} hold with nonlinearities of arbitrary polynomial growth, with the obvious changes. For $n\ge4$ the method breaks down in Lemma~\ref{lem:rigidity}, because the boundedness of the potential generated by the trajectory, which Theorem~\ref{thm:ucp} requires, is no longer available. For a vector field $u$ we write $\hat f(u)=f(\cdot,u(\cdot))$, and we set
\begin{equation}\label{eq:r}
r=\frac{6}{3-\gamma}\in[2,6).
\end{equation}

\begin{lemma}\label{lem:nemytskii}
Assume (H3). Then $\hat f$ maps $H^1_0(M;TM)$ into $L^2(M;TM)$, and there exists $C>0$ such that
\[
\|\hat f(u)-\hat f(v)\|\le C\bigl(1+\|u\|_{L^6}^\gamma+\|v\|_{L^6}^\gamma\bigr)\|u-v\|_{L^r}\qquad\text{for all } u,v\in H^1_0(M;TM).
\]
\end{lemma}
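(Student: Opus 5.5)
The plan is to start from the pointwise bound \eqref{eq:f-lip} and integrate it, with Hölder's inequality and the Sobolev embedding $H^1_0\subset L^6$ valid in dimension three doing the rest. Fix $u,v\in H^1_0(M;TM)$. By \eqref{eq:f-lip}, for a.e.\ $x\in M$,
\[
|\hat f(u)-\hat f(v)|(x)\le C_f\bigl(1+|u(x)|^\gamma+|v(x)|^\gamma\bigr)|u(x)-v(x)|.
\]
Squaring and integrating gives $\|\hat f(u)-\hat f(v)\|\le C_f\|(1+|u|^\gamma+|v|^\gamma)|u-v|\|_{L^2}$. Measurability of $\hat f(u)$ is clear, since $F\in C^2(TM)$ makes $f$ continuous and $u$ is measurable.

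The key step is choosing Hölder exponents that match $r=6/(3-\gamma)$ from \eqref{eq:r}. If $\gamma=0$, then $r=2$ and the bound is immediate, with factor $3C_f$. If $\gamma\in(0,2)$, let $p=6/\gamma>3$ and note
\[
\frac1p+\frac1r=\frac{\gamma}{6}+\frac{3-\gamma}{6}=\frac12 .
\]
Hölder's inequality then gives
\[
\bigl\||u|^\gamma|u-v|\bigr\|_{L^2}\le\||u|^\gamma\|_{L^{6/\gamma}}\|u-v\|_{L^r}=\|u\|_{L^6}^\gamma\|u-v\|_{L^r},
\]
and the same bound holds with $v$ in place of $u$. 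For the constant term, $\|u-v\|\le|M|^{1/2-1/r}\|u-v\|_{L^r}$ because $r\ge2$ and $M$ has finite volume. Adding the three contributions yields the asserted estimate with $C=C_f\max\{1,|M|^{1/2-1/r}\}$.

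It remains to show that $\hat f$ maps $H^1_0$ into $L^2$. Take $v=0$: then $\hat f(0)$ is bounded by $f_0$, so $\|\hat f(u)\|\le f_0|M|^{1/2}+C(1+\|u\|_{L^6}^\gamma)\|u\|_{L^r}$. Since $r<6$, this is finite by the embedding $H^1_0(M;TM)\subset L^6\subset L^r$ on the compact three-manifold $M$. The embedding for vector fields follows from the scalar one applied to $|u|$, using Kato's inequality $|\nabla|u||\le|\nabla u|$, so $|u|\in H^1_0(M)$. There is no real obstacle here; the only point needing care is the exponent bookkeeping, in particular treating $\gamma=0$ separately so that $p=6/\gamma$ makes sense.
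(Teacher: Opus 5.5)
Your proof is correct and matches the paper's argument: the pointwise bound \eqref{eq:f-lip}, H\"older's inequality with $\frac12=\frac\gamma6+\frac1r$, and the embedding $H^1_0\subset L^6$ in dimension three. You also spell out details the paper leaves implicit, namely the constant term via $L^r\subset L^2$, the case $\gamma=0$, and the mapping property obtained by taking $v=0$.
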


\begin{proof}
Since $\frac12=\frac\gamma6+\frac1r$, the claim follows from \eqref{eq:f-lip}, the H\"older inequality and the embedding $H^1_0\subset L^6$, valid because $n=3$.
\end{proof}

The phase space is $\cH=H^1_0(M;TM)\times L^2(M;TM)$ with the norm $\|(u,v)\|_{\cH}^2=a(u,u)+\|v\|_\varrho^2$. For $y=(u,v)\in\cH$ we define the energy
\begin{equation}\label{eq:Phi}
\Phi(y)=\frac12\|v\|_\varrho^2+\frac12a(u,u)+\int_MF(x,u)\,\dd v-(h,u),
\end{equation}
and, for a solution of a linear problem, the quadratic energy $E(t)=\frac12\bigl(\|\partial_tu(t)\|_\varrho^2+a(u(t),u(t))\bigr)$. In $\cH$ we consider the operator $\cA_0(u,v)=(v,-Au-\varrho^{-1}av)$ with domain $D(\cA_0)=D(A)\times H^1_0$, and the map $\mathcal F(u,v)=(0,\varrho^{-1}(h-\hat f(u)))$, so that \eqref{eq:main} reads $\partial_tU=\cA_0U+\mathcal F(U)$ for $U=(u,\partial_tu)$.

\begin{proposition}\label{prop:wellposed}
Assume \Lh{}, (H1)--(H4). For every $y=(u_0,u_1)\in\cH$ problem \eqref{eq:main} has a unique mild solution
\[
u\in C([0,\infty);H^1_0(M;TM))\cap C^1([0,\infty);L^2(M;TM)),
\]
and the energy identity
\begin{equation}\label{eq:energy-identity}
\Phi(u(t),\partial_tu(t))+\int_s^t\bigl(a\,\partial_tu,\partial_tu\bigr)\,\dd\tau=\Phi(u(s),\partial_tu(s)),\qquad 0\le s\le t,
\end{equation}
holds. Setting $S_ty=(u(t),\partial_tu(t))$, we obtain a dynamical system $(\cH,S_t)$. For every $R>0$ there exists $L_R$ such that $\|S_ty^1-S_ty^2\|_{\cH}\le e^{L_Rt}\|y^1-y^2\|_{\cH}$ whenever $\|y^1\|_{\cH},\|y^2\|_{\cH}\le R$.
\end{proposition}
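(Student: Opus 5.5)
We treat \eqref{eq:main} as the abstract Cauchy problem $\partial_tU=\cA_0U+\mathcal F(U)$ in $\cH$ and argue in four steps: linear semigroup theory for $\cA_0$, a local Lipschitz property of $\mathcal F$, a fixed point argument on short time intervals, and continuation to all times by means of the energy identity and (H4).

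\textbf{Linear part.} By Lemma~\ref{lem:operatorA}, $A$ is positive and self-adjoint with $D(A^{1/2})=H^1_0$ and $\|A^{1/2}u\|_\varrho^2=a(u,u)$. Hence the operator $(u,v)\mapsto(v,-Au)$ with domain $D(A)\times H^1_0$ is skew-adjoint in $\cH$ with the norm $a(u,u)+\|v\|_\varrho^2$. The damping operator $(u,v)\mapsto(0,-\varrho^{-1}av)$ is bounded on $\cH$ by (H1). It is also dissipative, because its contribution to $\langle\cA_0U,U\rangle_\cH$ is $(-\varrho^{-1}av,v)_\varrho=-(av,v)\le0$. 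Thus $\cA_0$ is m-dissipative: maximality is preserved by a bounded dissipative perturbation of a skew-adjoint operator. By the Lumer--Phillips theorem, $\cA_0$ generates a $C_0$ contraction semigroup $e^{t\cA_0}$ on $\cH$.

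\textbf{Nonlinear part.} Lemma~\ref{lem:nemytskii} and the embedding $H^1_0\subset L^r$ (recall $r<6$) show that $\mathcal F$ is Lipschitz on bounded sets of $\cH$. The Lipschitz constant depends only on the radius, through the $L^6$ norms, and $\|u\|_{L^6}\le C\,a(u,u)^{1/2}$. A Banach fixed point argument for the Duhamel formula
\[
U(t)=e^{t\cA_0}y+\int_0^te^{(t-s)\cA_0}\mathcal F(U(s))\,\dd s
\]
then yields a unique local mild solution in $C([0,T];\cH)$. Its existence time depends only on $\|y\|_\cH$, and the blow-up alternative holds. The first component of this mild solution has the regularity stated in the proposition.

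\textbf{Energy identity.} For $y\in D(\cA_0)$, we need $\mathcal F$ to map $D(\cA_0)$ into itself locally, i.e.\ $h\in H^1_0$ and $\hat f(u)\in H^1_0$ for $u\in H^2\cap H^1_0$. The second condition follows from (H3), since in dimension three $H^2\subset C(M)$. We therefore first take $h\in H^1_0$, so that the solution is strong. We multiply the equation by $\partial_tu$ in $L^2$, use Lemma~\ref{lem:green} and the identity $\frac{\dd}{\dd t}\int_MF(x,u)\,\dd v=(\hat f(u),\partial_tu)$, and integrate from $s$ to $t$; this gives \eqref{eq:energy-identity}. For general $y\in\cH$ and $h\in L^2$, we approximate $y$ and $h$ by such data. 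The solutions converge in $C([0,T];\cH)$ by the Lipschitz estimate. The integral $\int_MF(x,u)\,\dd v$ is continuous on $H^1_0$, since $|F(x,u)|\le C(1+|u|^{\gamma+2})$ and $\gamma+2<6$. Hence every term of the identity passes to the limit. \textbf{This approximation step is the one requiring the most care}, mainly in checking the local invariance of $D(\cA_0)$ under the flow.

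\textbf{Global existence and the Lipschitz bound.} By (H4),
\[
\Phi(u,v)\ge\tfrac12\|v\|_\varrho^2+\tfrac12\Bigl(1-\frac{\kappa}{\lambda_1}\Bigr)a(u,u)-C_F|M|-\|h\|\,\|u\|,
\]
which with \eqref{eq:lambda1} and Young's inequality gives $\Phi(y)\ge c\|y\|_\cH^2-C$. Conversely, $\Phi$ is bounded on bounded sets. Since $\Phi$ is nonincreasing along solutions by \eqref{eq:energy-identity}, the $\cH$-norm stays bounded, so the blow-up alternative rules out blow-up and solutions are global. The bound is uniform: $\|S_ty\|_\cH\le R'(R)$ for $\|y\|_\cH\le R$. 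Finally, subtract the Duhamel formulas for two solutions, use the contraction property of $e^{t\cA_0}$ and the Lipschitz constant of $\mathcal F$ on the ball of radius $R'$, and apply Gronwall's lemma; this gives the estimate with $L_R$. Uniqueness, together with continuity in $y$ and in $t$, gives the semigroup property, so $(\cH,S_t)$ is a dynamical system.
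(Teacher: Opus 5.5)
\textbf{Gap: the claimed invariance of $D(\cA_0)$ under $\mathcal F$ is false.} The problem is in your energy-identity step. You claim that $\mathcal F$ maps $D(\cA_0)=D(A)\times H^1_0$ into itself once $h\in H^1_0$, because $\hat f(u)\in H^1_0$ for $u\in H^2\cap H^1_0$. This fails in general. For $u\in H^2\cap H^1_0\subset C(M)$, the trace of $\hat f(u)$ on $\partial M$ is $f(\cdot,0)|_{\partial M}$, and nothing in (H3)--(H4) forces $f(x,0)=0$. For instance, $F(x,u)=\tfrac13|u|^3+\langle b(x),u\rangle$, with $b$ a smooth field not vanishing on $\partial M$, satisfies (H3) with $\gamma=1$ and also (H4); the paper even introduces $f_0=\sup_x|f(x,0)|$. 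So $\varrho^{-1}(h-\hat f(u))\notin H^1_0$, and approximating $h$ by $H^1_0$ fields does not justify that the approximating solutions are strong.

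Even where invariance holds, you do not say how it yields strong solutions. The natural mechanism is a fixed point in $D(\cA_0)$ with the graph norm. That needs $\hat f$ locally Lipschitz from $H^2$ into $H^1$, hence $\partial_uf$ and the $x$-derivatives of $f$ locally Lipschitz in $u$, which is more than $F\in C^2$.

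\textbf{How to repair it.} The invariance is simply not needed. As the paper does, replace $\mathcal F$ by a globally Lipschitz map (for example $\mathcal F$ composed with the radial retraction onto a ball) that coincides with $\mathcal F$ on a ball containing the trajectory on $[0,T]$. Since $\cH$ is a Hilbert space, hence reflexive, \cite[Theorem~6.1.6]{Pazy83} then shows that the mild solution with $y\in D(\cA_0)$ is a strong solution, with no condition on $h$ or on $f(\cdot,0)$. Alternatively, $\mathcal F$ is of class $C^1$ on $\cH$ under (H3), since $\partial_uf(\cdot,u)$ depends continuously on $u\in L^6$ with values in $L^3$. The regularity theorem for $C^1$ nonlinearities in \cite[Section~6.1]{Pazy83} then gives a classical solution. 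For strong solutions your computation of the energy identity is correct, and you only need to approximate $y$, not $h$. If you prefer to keep your route, you must approximate $h$ in $L^2$ by $H^1$ fields whose trace equals $f(\cdot,0)|_{\partial M}$, and you still need a regularity argument.

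\textbf{The rest is fine.} This covers the linear part, the local theory, the coercivity bound on $\Phi$, global existence and the Gronwall estimate. In the linear part you use a bounded dissipative perturbation of a skew-adjoint operator where the paper checks the range condition directly with Lax--Milgram; both work.
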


\begin{proof}
For $U=(u,v)\in D(\cA_0)$, Lemma~\ref{lem:operatorA} gives
\[
(\cA_0U,U)_{\cH}=a(v,u)-a(u,v)-(av,v)\le0,
\]
and, given $(f_1,f_2)\in\cH$, the equation $U-\cA_0U=(f_1,f_2)$ reduces to $v=u-f_1$ and
\[
\varrho u-\Lam u+au=\varrho(f_1+f_2)+af_1,
\]
which has a unique solution $u\in D(A)$ by the Lax--Milgram theorem and Lemma~\ref{lem:operatorA}. By the Lumer--Phillips theorem \cite[Theorem~1.4.3]{Pazy83}, $\cA_0$ generates a $C_0$-semigroup of contractions on $\cH$. By Lemma~\ref{lem:nemytskii}, $\mathcal F$ is Lipschitz on bounded subsets of $\cH$. Hence \cite[Theorem~6.1.4]{Pazy83} provides a unique mild solution on a maximal interval $[0,t_{\max})$, with $\|S_ty\|_{\cH}\to\infty$ as $t\to t_{\max}$ if $t_{\max}<\infty$.

If $y\in D(\cA_0)$, we replace $\mathcal F$ by a globally Lipschitz map which coincides with $\mathcal F$ on a ball containing the trajectory on a given compact interval. Then \cite[Theorem~6.1.6]{Pazy83} shows that the solution is a strong solution. For strong solutions,
\[
\frac{\dd}{\dd t}\,\frac12\|U\|^2_{\cH}=(\cA_0U,U)_{\cH}+(\mathcal F(U),U)_{\cH},
\]
and the functional $u\mapsto\int_MF(x,u)\,\dd v$ is of class $C^1$ on $H^1_0$ with derivative $\hat f(u)$, because $\gamma+2<6$. Hence \eqref{eq:energy-identity} holds for strong solutions, and by density and continuous dependence it holds for mild solutions.

By (H4) and \eqref{eq:lambda1}, $\int_MF(x,u)\,\dd v\ge-\frac{\kappa}{2\lambda_1}a(u,u)-C_F|M|$, and therefore
\begin{equation}\label{eq:Phi-lower}
\Phi(y)\ge\frac12\|v\|_\varrho^2+\frac14\Bigl(1-\frac\kappa{\lambda_1}\Bigr)a(u,u)-C,\qquad y=(u,v)\in\cH,
\end{equation}
where the term $(h,u)$ has been absorbed by the Young inequality. Since $\Phi$ is nonincreasing along trajectories by \eqref{eq:energy-identity}, the solution remains bounded and $t_{\max}=\infty$. The Lipschitz estimate follows from the variation of constants formula and the Gronwall lemma, because trajectories issued from the ball of radius $R$ remain in a ball whose radius depends only on $R$.
\end{proof}

\begin{lemma}\label{lem:Phi-equilibria}
Assume \Lh{}, (H1)--(H4). Then $\Phi$ is continuous on $\cH$, bounded on bounded sets, and the sets $\{y\in\cH:\Phi(y)\le R\}$ are bounded and forward invariant for every $R$. The set of stationary points of $(\cH,S_t)$ is
\[
\cN=\bigl\{(e,0):\ e\in D(A),\ -\Lam e+\hat f(e)=h\bigr\},
\]
and it is bounded in $\cH$.
\end{lemma}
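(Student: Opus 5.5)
The argument has four parts: continuity and local boundedness of $\Phi$, boundedness and forward invariance of the sublevel sets, the identification of the stationary points, and the bound on $\cN$.

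\emph{Continuity of $\Phi$.} The quadratic terms $\frac12\|v\|_\varrho^2+\frac12a(u,u)$ and the linear term $(h,u)$ are continuous on $\cH$. For the potential term, $u\mapsto\int_MF(x,u)\,\dd v$ is $C^1$ on $H^1_0$ with derivative $\hat f(u)$, as noted in the proof of Proposition~\ref{prop:wellposed}. Quantitatively, integrating \eqref{eq:f-lip} along segments gives
\[
|F(x,u)|\le |F(x,0)|+f_0|u|+C(|u|^2+|u|^{\gamma+2}).
\]
Combined with the mean value theorem, H\"older's inequality and $H^1_0\subset L^6$ (recall $\gamma+2<6$), this shows that the potential term is Lipschitz on bounded sets of $H^1_0$. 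Hence $\Phi$ is continuous on $\cH$ and bounded on bounded sets.

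\emph{Sublevel sets.} Boundedness of $\{\Phi\le R\}$ follows from the coercive lower bound \eqref{eq:Phi-lower}: $\Phi(y)\le R$ forces $\|v\|_\varrho^2$ and $a(u,u)$ to be bounded in terms of $R$. Forward invariance follows from the energy identity \eqref{eq:energy-identity}. Since $a\ge0$, we get $\Phi(S_ty)\le\Phi(y)$ for all $t\ge0$.

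\emph{Stationary points.} First let $(e,0)\in\cN$. Then $u(t)\equiv e$ belongs to $C([0,\infty);D(A))$ and solves \eqref{eq:main} classically, because $\partial_tu=0$ and $-\Lam e+\hat f(e)=h$. It is therefore a strong, hence mild, solution, and uniqueness in Proposition~\ref{prop:wellposed} gives $S_t(e,0)=(e,0)$.

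Conversely, let $S_ty=y=(u_0,u_1)$ for all $t$. Then $u(t)=u_0$ is constant in time, so $\partial_tu=0$, and the $C^1([0,\infty);L^2)$ regularity forces $u_1=0$. To pass to the equation, I would test the mild formulation against $D(\cA_0^*)$, or equivalently use the weak form. The weak form of \eqref{eq:main} for a time-independent $u$ reduces, for all $v\in H^1_0$, to
\[
a(u_0,v)+(\hat f(u_0),v)=(h,v).
\]
So $u_0$ is a weak solution of $-\Lam u_0=h-\hat f(u_0)\in L^2$, where $\hat f(u_0)\in L^2$ by Lemma~\ref{lem:nemytskii}. Elliptic regularity from the proof of Lemma~\ref{lem:operatorA} then gives $u_0\in D(A)$.

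Justifying the weak form from the mild solution is the only delicate point. I would handle it cleanly with the resolvent: a fixed point of the mild flow satisfies $\int_0^t S_0(t-s)\mathcal F(y)\,\dd s=y-S_0(t)y$. This implies $y\in D(\cA_0)$ and $\cA_0y+\mathcal F(y)=0$, from the generator characterization $\lim_{t\to 0}t^{-1}(S_0(t)y-y)=\cA_0y$ applied to $y-S_0(t)y=\int_0^tS_0(s)\mathcal F(y)\,\dd s$. Reading off the components of $\cA_0y+\mathcal F(y)=0$ gives $v=0$ and $Au_0=\varrho^{-1}(h-\hat f(u_0))$.

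\emph{Boundedness of $\cN$.} Take the $L^2$ inner product of $-\Lam e+\hat f(e)=h$ with $e$ and use Lemma~\ref{lem:green}. This gives
\[
a(e,e)+(\hat f(e),e)=(h,e).
\]
By (H4) and \eqref{eq:lambda1},
\[
(\hat f(e),e)\ge-\kappa\|e\|_\varrho^2-C_F|M|\ge-\frac{\kappa}{\lambda_1}a(e,e)-C.
\]
Hence
\[
\Bigl(1-\frac\kappa{\lambda_1}\Bigr)a(e,e)\le\|h\|\,\|e\|+C,
\]
and Young's inequality together with Poincar\'e's inequality bounds $a(e,e)$ uniformly. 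This proves that $\cN$ is bounded in $\cH$.
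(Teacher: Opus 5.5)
Your proposal is correct and follows the paper's proof in all four parts: the growth bound on $F$ for continuity, \eqref{eq:Phi-lower} and \eqref{eq:energy-identity} for the sublevel sets, and testing the stationary equation with $e$ together with (H4) for the bound on $\cN$. The one variation is in identifying the stationary points. You get $y\in D(\cA_0)$ and $\cA_0y+\mathcal F(y)=0$ directly from the generator limit applied to $y-e^{t\cA_0}y=\int_0^te^{s\cA_0}\mathcal F(y)\,\dd s$, whereas the paper cites the integrated identity \eqref{eq:mild-int}, which is proved only later in Section~\ref{sec:dynamics}. Your version is self-contained and already gives $u_0\in D(A)$, so the weak-form and elliptic-regularity detour you sketch beforehand is unnecessary.
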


\begin{proof}
By \eqref{eq:f-lip}, $|F(x,u)|\le C(1+|u|^{\gamma+2})$ with $\gamma+2<6$, which gives the continuity of $\Phi$ and its boundedness on bounded sets. The boundedness of the sublevel sets follows from \eqref{eq:Phi-lower}, and their forward invariance from \eqref{eq:energy-identity}. If $S_ty=y$ for all $t$, then $u(t)=e$ is constant, $\partial_tu=0$, and the integrated form \eqref{eq:mild-int} of the equation, proved in Section~\ref{sec:dynamics} for every mild solution, gives $y=(e,0)\in D(\cA_0)$ with $-\Lam e+\hat f(e)=h$. Taking the inner product with $e$ and using (H4),
\[
a(e,e)\le\kappa\|e\|_\varrho^2+C_F|M|+\|h\|\|e\|\le\frac\kappa{\lambda_1}a(e,e)+C_F|M|+C\|h\|\,a(e,e)^{1/2},
\]
so that $a(e,e)$ is bounded by a constant depending only on $\kappa$, $\lambda_1$, $C_F$, $|M|$, $\varrho$ and $\|h\|$.
\end{proof}

%=====================================================================
\section{Geometric hypothesis and Carleman weights}\label{sec:geometry}
%=====================================================================

In this section we derive a quantitative form of the convexity required in Assumption~\ref{ass:G}, we show that \Gh{} can be fulfilled by damping regions of arbitrarily small volume, and we introduce the Carleman weights.

\begin{lemma}\label{lem:U}
Assume \Gh{}. There exist open sets $U,U_1\subset M$ and constants $\rho>0$, $\varpi>0$ such that $K\subset U$, $\overline U\subset U_1$, the closures $\overline U$ and $\overline{U_1}$ are compact manifolds with smooth boundary contained in $\inte M$, and, for $c\in\{c_S,c_P\}$,
\begin{equation}\label{eq:convex}
\nabla^2_{g_c}d(x)(X,X)\ge2\rho|X|_{g_c}^2,\qquad |\nabla d(x)|\ge\varpi,\qquad x\in\overline{U_1},\ X\in T_xM .
\end{equation}
\end{lemma}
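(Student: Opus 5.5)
The argument is a compactness and continuity argument applied to the strict inequalities of (G2), followed by a smoothing step that produces sets with smooth boundary. If $K=\emptyset$ the statement is vacuous with $U=U_1=\emptyset$, so assume $K\neq\emptyset$.

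\textbf{Step 1: an open neighbourhood where the strict conditions persist.} Consider the continuous functions
\[
q_c(x)=\min\bigl\{\nabla^2_{g_c}d(x)(X,X):\ X\in T_xM,\ |X|_{g_c}=1\bigr\},\qquad c\in\{c_S,c_P\},
\]
together with $x\mapsto|\nabla d(x)|$. Continuity of $q_c$ follows because $d$ and $c$ are smooth, so the $g_c$-Hessian is a continuous symmetric tensor and $q_c$ is its least eigenvalue relative to $g_c$. By (G2), $q_{c_S}$, $q_{c_P}$ and $|\nabla d|$ are strictly positive on $K$. By (G1), $K$ is compact, so these functions have positive minima on $K$. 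Let $4\rho$ be the smaller of $\min_K q_{c_S}$ and $\min_K q_{c_P}$, and let $2\varpi=\min_K|\nabla d|$. The set
\[
W=\{x\in\inte M:\ q_{c_S}(x)>2\rho,\ q_{c_P}(x)>2\rho,\ |\nabla d(x)|>\varpi\}
\]
is then open and contains $K$. By homogeneity, $q_c\ge2\rho$ gives $\nabla^2_{g_c}d(X,X)\ge2\rho|X|_{g_c}^2$ for all $X$. Hence \eqref{eq:convex} holds at every point of $W$, and it suffices to arrange $\overline{U_1}\subset W$.

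\textbf{Step 2: nested compact neighbourhoods with smooth boundary.} Since $K\subset W$ is compact and $\partial M\cap K=\emptyset$, choose open sets $K\subset V_0$ with $\overline{V_0}\subset V_1$, $\overline{V_1}\subset V_2$ and $\overline{V_2}\subset W\cap\inte M$, for instance $\epsilon$-neighbourhoods of $K$ for small $\epsilon$. Take a smooth function $\chi\colon M\to[0,1]$ with $\chi=1$ on $\overline{V_0}$ and $\supp\chi\subset V_2$. By Sard's theorem, pick regular values $0<s_1<s_0<1$ of $\chi$, and set
\[
U=\{\chi>s_0\},\qquad U_1=\{\chi>s_1\}.
\]
These are open sets containing $K$, with $\overline U\subset\{\chi\ge s_0\}\subset U_1$ and $\overline{U_1}\subset\{\chi\ge s_1\}\subset\supp\chi\subset W\cap\inte M$. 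At regular values, $\{\chi\ge s\}$ is a compact smooth manifold with boundary $\{\chi=s\}\subset\inte M$, and it coincides with the closure of $\{\chi>s\}$: near a level point $d\chi\neq0$, so every point of $\{\chi=s\}$ is a limit of points of $\{\chi>s\}$. Thus $\overline U$ and $\overline{U_1}$ are compact manifolds with smooth boundary contained in $\inte M$, and \eqref{eq:convex} holds on $\overline{U_1}$ by Step 1.

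The only point requiring care is the last one: the closures must be genuine smooth manifolds with boundary, rather than merely compact sets. Choosing regular values by Sard handles this. Everything else is routine continuity and compactness.
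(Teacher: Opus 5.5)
Your proposal is correct and follows essentially the same route as the paper: an open neighbourhood of $K$ in $\inte M$ on which the strict conditions of (G2) persist, a cut-off equal to $1$ near $K$ and supported there, and Sard's theorem to take $U$ and $U_1$ as superlevel sets at regular values. The only difference is cosmetic: you fix $\rho$ and $\varpi$ from minima on $K$ before building $W$, whereas the paper takes them as minima over the compact set $\overline{U_1}$ after choosing $U_1$.
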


\begin{proof}
The set $\cO$ of points of $\inte M$ where $\nabla d\ne0$ and both Hessians $\nabla^2_{g_S}d$ and $\nabla^2_{g_P}d$ are positive definite is open and contains the compact set $K$. Let $\theta\in C^\infty_c(\cO)$ with $0\le\theta\le1$ and $\theta=1$ on a neighbourhood of $K$. By Sard's theorem there are regular values $0<\bar c_1<\bar c<1$ of $\theta$, and $U=\{\theta>\bar c\}$, $U_1=\{\theta>\bar c_1\}$ have the required properties, with $\rho$ and $\varpi$ given by the minima of the continuous functions $\nabla^2_{g_c}d(X,X)/2$ and $|\nabla d|$ on the compact sets $\{(x,X):x\in\overline{U_1},\ |X|_{g_c}=1\}$ and $\overline{U_1}$.
\end{proof}

From now on $U$, $U_1$, $\rho$ and $\varpi$ are fixed as in Lemma~\ref{lem:U}. They depend on the auxiliary choices made in its proof, and so does the geometric time defined next; all the constants below are relative to these fixed choices. If $K=\emptyset$, the set $\cO$ in the proof of Lemma~\ref{lem:U} may be empty, and we agree that $U=U_1=\emptyset$, $\rho=\varpi=1$ and $T_*=0$; the Carleman estimates of Section~\ref{sec:HC} are then void, and the proofs of Sections~\ref{sec:ucp} and~\ref{sec:obs} go through with the obvious simplifications, every cut-off supported in $U$ being zero. We define the geometric time
\begin{equation}\label{eq:Tstar}
T_*=2\Bigl(\frac{\max_{\overline U}d-\min_{\overline U}d}{\rho}\Bigr)^{1/2}.
\end{equation}

\begin{remark}\label{rem:examples}
(i) If $\lambda/\mu$ is constant, then $g_P=\frac{\mu}{\lambda+2\mu}\,g_S$ is a constant multiple of $g_S$. Since the Levi-Civita connection is invariant under constant rescalings, $\nabla^2_{g_P}d=\nabla^2_{g_S}d$, and the two convexity conditions in (G2) coincide. This is the case of a medium with constant Poisson ratio, and in particular of a Poisson solid, $\lambda=\mu$, which is a classical model for the rocks of the Earth's crust and mantle \cite[Section~4.6]{Puj03}. If moreover $\varrho$ and $\mu$ are constant, (G2) means that $d$ is strictly convex with respect to $g$. In general (G2) is a condition on the ray metrics and not on $g$, and a function which is strictly convex with respect to $g$ need not satisfy it, as the next example shows.

(ii) Let $M$ be a smooth bounded domain of $\R^n$ with the Euclidean metric, let $x_0\notin K$ and $d(x)=|x-x_0|^2$. We write $g_c=e^{2\varphi_c}g$ with $\varphi_c=-\log c$, so that the Christoffel symbols of $g_c$ are
\[
\Gamma^k_{ij}=\delta_{ik}\partial_j\varphi_c+\delta_{jk}\partial_i\varphi_c-\delta_{ij}\partial_k\varphi_c .
\]
A direct computation gives
\[
\nabla^2_{g_c}d(X,X)=2|X|^2+2|X|^2\langle\nabla\varphi_c,x-x_0\rangle-4\langle X,\nabla\varphi_c\rangle\langle X,x-x_0\rangle .
\]
The quadratic form $2|X|^2\langle a,b\rangle-4\langle X,a\rangle\langle X,b\rangle$ has smallest eigenvalue $-2|a||b|$, and therefore
\[
\nabla^2_{g_c}d(X,X)\ge2\bigl(1-|\nabla\log c|\,|x-x_0|\bigr)|X|^2 .
\]
Hence \Gh{} holds for every open set $\omega\supset\partial M$ with $x_0\notin K$ provided $|x-x_0|\,|\nabla\log c|<1$ on $K$ for $c\in\{c_S,c_P\}$, which is the case of a slowly varying medium. When $c_S$ and $c_P$ are constant, this is a particular case of the multiplier geometry of Zuazua \cite{Zua90}, and of its elastic versions \cite{Gue00,AC02}, where $\omega$ is only required to contain a neighbourhood of the part of $\partial M$ on which $\langle x-x_0,\nu\rangle>0$; in that case \Gh{} is strictly more restrictive.

(iii) Let $\varrho$, $\lambda$ and $\mu$ be constant and let $M$ be a smooth compact domain in a complete simply connected manifold $(N,g)$ with nonpositive sectional curvature. Then $d=\frac12\operatorname{dist}(p,\cdot)^2$ is smooth, $\Hess d\ge g$ by the Hessian comparison theorem \cite[Theorem~6.4.3]{Pet16}, and \Gh{} holds when $\omega$ contains a neighbourhood of $\partial M$ and either $p\notin M$ or $\omega$ contains a neighbourhood of $p$.

(iv) On the other hand, if $K$ contains a closed geodesic $\Gamma$ of $g_S$ or of $g_P$, then \Gh{} fails, because $(d\circ\Gamma)''=\nabla^2_{g_c}d(\Gamma',\Gamma')>0$ is incompatible with the periodicity of $d\circ\Gamma$. Thus $\omega$ must meet every closed ray, in the spirit of the geometric control condition \cite{BLR92}. We refer to Appendix~\ref{app:rays} for a comparison with the geometric control and obstacle conditions, and to \cite[Chapter~6]{BY17} for systematic constructions of convex weights.
\end{remark}

The next result shows that \Gh{} imposes no restriction on the size of the damping region, besides (G1). It is the counterpart for the Lam\'e system of the constructions of Cavalcanti, Domingos Cavalcanti, Fukuoka and Soriano \cite[Section~6]{CDFS10} and of \cite[Section~2]{CMMS26} for the scalar wave equation.

\begin{proposition}\label{prop:small}
Assume \Lh{}. For every $\varepsilon>0$ there exist an open set $\omega\subset M$ with $\vol(\omega)<\varepsilon$ and a function $d\in C^\infty(M)$ such that $(\omega,d)$ satisfies \Gh{}.
\end{proposition}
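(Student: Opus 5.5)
The plan is to follow the scalar-wave constructions of \cite[Section~6]{CDFS10} and \cite[Section~2]{CMMS26}: first build a compact set $K\subset\inte M$ of volume close to $\vol(M)$ that is a finite disjoint union of small closed balls, then put a locally defined convex weight on each ball. Since (G2) only concerns the restriction of $d$ to $K$, it suffices to produce, for each ball, a smooth function with the required properties there. These local functions can then be glued with cut-offs supported in disjoint neighbourhoods, and extended arbitrarily to a smooth function on $M$. Setting $\omega=M\setminus K$, (G1) holds because $K\subset\inte M$, and $\vol(\omega)<\varepsilon$ will follow from the covering step.

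\emph{Local step.} Fix $x_0\in\inte M$ and work in normal coordinates $x$ for $g$ centred at $x_0$. Put $d(x)=\langle \xi,x\rangle+\tfrac{A}{2}|x|^2$ with a fixed unit covector $\xi$ and a constant $A>0$ to be chosen. Then $\nabla d(x_0)=\xi\neq0$, so $\nabla d\ne0$ on a ball of radius $r\le r_0(A)$. For $c\in\{c_S,c_P\}$ and $g_c=c^{-2}g$, the conformal change formula (as in Remark~\ref{rem:examples}(ii)) gives
\[
\nabla^2_{g_c}d=\nabla^2_gd-\dd\varphi_c\otimes\dd d-\dd d\otimes\dd\varphi_c+\langle\dd\varphi_c,\dd d\rangle g,\qquad \varphi_c=-\log c .
\]
At $x$ we have $\nabla_g^2d=Ag+O(|x|)+O(A|x|)$, and the remaining terms are bounded by $C_0|\nabla d|\le C_0(1+A|x|)$, with $C_0$ depending only on $\|\nabla\log c_S\|_\infty$ and $\|\nabla\log c_P\|_\infty$ (computed from \Lh{} via \eqref{eq:speeds}). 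Choose $A=4C_0+1$. Then on a ball $B(x_0,r)$ with $r$ small, depending on $A$ and on uniform bounds for the geometry, both Hessians are $\ge (A/2)g$ and $|\nabla d|\ge1/2$. The key point is that the admissible radius $r_*$ can be taken uniform in $x_0$ on any compact subset of $\inte M$.

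\emph{Covering step and gluing.} First choose a compact $M_\delta=\{x:\dist(x,\partial M)\ge\delta\}$ with $\vol(M\setminus M_\delta)<\varepsilon/2$. By the Vitali covering lemma for the Riemannian measure (which is doubling at small scales with volume ratios close to Euclidean), there are finitely many pairwise disjoint closed balls $\overline{B(x_j,r_j)}\subset\inte M_{\delta/2}$ with $r_j<r_*$ that cover $M_\delta$ up to a set of measure $<\varepsilon/2$. Take $K=\bigcup_j\overline{B(x_j,r_j)}$. Because these balls are disjoint and compact, their distances are positive, so we can pick $\chi_j\in C_c^\infty$ equal to $1$ near $\overline{B(x_j,r_j)}$ with pairwise disjoint supports inside $B(x_j,r_*)$, and set $d=\sum_j\chi_jd_j$. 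On each ball $d=d_j$, so (G2) holds. Then $\vol(\omega)\le\vol(M\setminus M_\delta)+\vol(M_\delta\setminus K)<\varepsilon$.

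The main obstacle is the uniformity of $r_*$ with respect to the centre $x_0$, since the covering argument needs balls of arbitrarily small radius but a fixed upper bound on the radius. I would obtain it from the uniform control of normal-coordinate Christoffel symbols on $M_{\delta/2}$ (injectivity radius bounded below there) together with compactness. A finite subcover of a Vitali family then works: take the Vitali family of all balls with centres in $M_\delta$ and radii below $\min(r_*,\delta/2)$, extract a disjoint countable family covering almost everything, and truncate it to finitely many balls.
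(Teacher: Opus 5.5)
Your proposal is correct, but it makes the pieces of $K$ disjoint by a different mechanism than the paper.

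\textbf{The paper's route.} The paper never makes the balls disjoint. It takes an arbitrary finite cover of $K'=M\setminus C_\ell$ by small coordinate balls $B_j$. Each carries an off-centre paraboloid $d_j=\frac12|x-q|^2$ with $|q|=2r_p$, so the gradient is small and the Christoffel symbols of $g_S,g_P$ are dominated by the identity Hessian. It then puts an open neighbourhood $O$ of the null set $\bigcup_j\partial B_j$ into $\omega$. The sets $N_j=B_j\setminus\bigcup_{i<j}\overline{B_i}$ are disjoint and split $K=K'\setminus O$ into compact pieces, on each of which $d=d_j$. This uses only compactness, the fact that finitely many smooth hypersurfaces are null, and outer regularity of the measure. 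It needs no covering theorem, no normal coordinates and no uniformity of radii.

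\textbf{Your route.} You replace this with the Vitali covering theorem for the (locally doubling) Riemannian measure. In return you get a cleaner undamped set: a finite union of disjoint closed geodesic balls, each with an explicit weight $\langle\xi,x\rangle+\frac A2|x|^2$. Here $A$ dominates the conformal terms, and after rescaling this weight is the paper's off-centre paraboloid. The price is the measure-theoretic covering argument and the extra care with normal coordinates near $\partial M$.

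\textbf{On the uniformity of $r_*$.} The uniformity of $r_*$ in the centre, which you single out as the main obstacle, is not actually needed. The family of balls $\overline{B(x_0,r)}$ with $x_0\in M_\delta$ and $r<\min\{r_*(x_0),\delta/10\}$ is already a fine cover of $M_\delta$. Vitali's theorem for fine covers only requires admissible balls of arbitrarily small radius centred at each point, together with doubling for the enlarged balls, which holds here since they stay inside $\inte M$.
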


\begin{proof}
\emph{Step 1 (collar).} For $\ell>0$ let $C_\ell=\{x\in M:\dist(x,\partial M)<\ell\}$. Since the sets $C_\ell$ decrease to $\partial M$ as $\ell\downarrow0$ and $\vol(\partial M)=0$, we may fix $\ell$ with $\vol(C_\ell)<\varepsilon/2$. The set $K'=M\setminus C_\ell$ is a compact subset of $\inte M$.

\emph{Step 2 (local convex functions).} Let $p\in K'$ and let $x=(x^1,\dots,x^n)$ be a coordinate chart defined on a neighbourhood $W_p$ of $p$, with $\overline{W_p}\subset\inte M$ compact, $x(p)=0$ and $x(W_p)\supset\overline{B_{r_0}}$, where $B_r$ denotes the Euclidean ball of radius $r$ centred at $0$. Let $\Lambda_p$ be a bound for the Christoffel symbols of $g_S$ and $g_P$ on $x^{-1}(\overline{B_{r_0}})$, and fix $r_p\in(0,r_0/3]$ with $3n^3\Lambda_pr_p\le\frac12$. Choose $q\in\R^n$ with $|q|=2r_p$, and let $d_p=\frac12|x-q|^2$ on $B_p=x^{-1}(B_{r_p})$. For $c\in\{c_S,c_P\}$ and $X\in T_zM$, $z\in B_p$, writing $\Gamma^k_{ij}$ for the Christoffel symbols of $g_c$,
\[
\nabla^2_{g_c}d_p(X,X)=\sum_{i,j}\Bigl(\delta_{ij}-\sum_k\Gamma^k_{ij}(x^k-q^k)\Bigr)X^iX^j\ge\bigl(1-n^3\Lambda_p|x-q|\bigr)|X|_e^2\ge\frac12|X|_e^2,
\]
because $|x-q|\le3r_p$ on $B_p$, where $|X|_e$ is the Euclidean norm of the components of $X$. Moreover the coordinate gradient of $d_p$ is $x-q$, which does not vanish on $B_p$ because $|x-q|\ge r_p$ there. Thus $\nabla d_p\ne0$ and $d_p$ is strictly convex with respect to $g_S$ and $g_P$ on $B_p$. The set $B_p$ is open, its boundary $\partial B_p=x^{-1}(\partial B_{r_p})$ is a smooth compact hypersurface, and $\overline{B_p}\subset\inte M$.

\emph{Step 3 (disjoint pieces).} By compactness, $K'\subset B_1\cup\dots\cup B_k$ for finitely many sets $B_j=B_{p_j}$ as in Step 2, with functions $d_j=d_{p_j}$. The set $\Sigma=\bigcup_j\partial B_j$ is a finite union of smooth compact hypersurfaces, so that $\vol(\Sigma)=0$, and by the outer regularity of the Riemannian measure there exists an open set $O\supset\Sigma$ with $\vol(O)<\varepsilon/2$. Let
\[
N_j=B_j\setminus\bigcup_{i<j}\overline{B_i},\qquad K=K'\setminus O .
\]
The sets $N_j$ are open and mutually disjoint, and $K$ is compact. If $x\in K$, let $j$ be the smallest index with $x\in B_j$. For $i<j$ we have $x\notin B_i$ and $x\notin\partial B_i\subset\Sigma$, hence $x\notin\overline{B_i}$ and $x\in N_j$. Thus $K\subset\bigcup_jN_j$, and the sets $K_j=K\cap N_j=K\setminus\bigcup_{i\ne j}N_i$ are compact.

\emph{Step 4 (the global function).} Let $\chi_j\in C^\infty_c(N_j)$ with $0\le\chi_j\le1$ and $\chi_j=1$ on a neighbourhood of $K_j$, and let $d=\sum_j\chi_jd_j$, where each product is extended by zero outside $\supp\chi_j\subset B_j$. Then $d\in C^\infty(M)$. Since $\supp\chi_i$ is a compact subset of $N_i$ and $N_i\cap N_j=\emptyset$ for $i\ne j$, the compact sets $K_j$ and $\bigcup_{i\ne j}\supp\chi_i$ are disjoint, so that $d=d_j$ on a neighbourhood of $K_j$. By Step 2, $d$ satisfies (G2) on $K=\bigcup_jK_j$.

\emph{Step 5 (the damping region).} Let $\omega=C_\ell\cup O$. Then $\omega$ is open, $\vol(\omega)<\varepsilon$, $\partial M\subset C_\ell\subset\omega$, and $M\setminus\omega=K'\setminus O=K$. Hence $(\omega,d)$ satisfies \Gh{}.
\end{proof}

\begin{remark}\label{rem:boundary}
In Proposition~\ref{prop:small} the volume of $\omega$ is arbitrarily small, while $\omega$ contains the whole boundary. The constants $\rho$, $\varpi$ and $T_*$ of Lemma~\ref{lem:U} depend on $\varepsilon$, and the construction gives no control of them as $\varepsilon\to0$. Condition (G1) cannot be removed by the method of this paper. The functions $\delta u$ and $\dd u$ in \eqref{eq:intro-P} satisfy no boundary condition, so that the Carleman estimates of Section~\ref{sec:HC} are proved for fields with compact support in $\inte M$. For the scalar wave equation with Dirichlet condition, the boundary term of the Carleman estimate has a favourable sign on the part of $\partial M$ where $\partial_\nu d<0$, see \cite[Theorem~4.2]{BY17}, and this is what allows damping regions whose boundary trace is also arbitrarily small, see \cite{CMMS26}. Carleman estimates for the Lam\'e system up to a Dirichlet boundary are available in two- and three-dimensional Euclidean domains \cite{IY05,IY05b}, under a sign condition on the weight on the undamped part of the boundary, and their Riemannian counterpart is not known. From the physical viewpoint, (G1) describes an elastic body whose boundary layer is made of a dissipative material.
\end{remark}

\subsection{Carleman weights}
Let $T>0$, $\beta\in(0,\rho)$ and $I_T=(-1,T+1)$. We set
\begin{equation}\label{eq:weight}
\psi(t,x)=d(x)-\beta\Bigl(t-\frac T2\Bigr)^2+\beta_0,\qquad \phi(t,x)=e^{\gamma_0\psi(t,x)},\qquad (t,x)\in\overline{I_T}\times\overline{U_1},
\end{equation}
where $\beta_0=1+\max_{\overline{U_1}}|d|+\beta(T+2)^2/4$, so that $\psi\ge1$, and $\gamma_0\ge1$ is the constant given by Lemma~\ref{lem:carleman-scalar} below, which depends on $T$ and $\beta$, as do the constants $s_0$ and $C$ of all the Carleman estimates of this section and of Section~\ref{sec:HC}. In the rest of the paper we use the abbreviation
\[
\ww{F}=\int_{I_T}\!\int_M e^{2s\phi}|F|^2\,\dd v\,\dd t
\]
for functions, vector fields and forms $F$ supported in $I_T\times\overline{U_1}$.

\begin{lemma}\label{lem:carleman-scalar}
Assume \Lh{} and \Gh{}. There exists $\gamma_0\ge1$ with the following property. For $c\in\{c_S,c_P\}$ and for every first-order differential operator $P_1$ with bounded measurable coefficients, there exist $s_0>0$ and $C>0$ such that
\[
s\ww{\nabla u}+s\ww{\partial_tu}+s^3\ww{u}\le C\ww{\partial_t^2u-c^2\Delta_gu+P_1u},\qquad s\ge s_0,
\]
for every function $u\in H^2(I_T\times M)$ with compact support in $I_T\times U_1$. Here $\Delta_g=\DB$ is the Laplace--Beltrami operator.
\end{lemma}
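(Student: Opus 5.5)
This estimate is the standard Riemannian Carleman estimate for a scalar wave operator, and I would reduce it to the one in \cite{BY17} through the ray metric $g_c=c^{-2}g$. \emph{Step 1: rewriting the operator.} In local coordinates we have $c^2\Delta_g u=\Delta_{g_c}u+(n-2)c^2\langle\nabla\log c,\nabla u\rangle_g$, which follows from $\Delta_{e^{2\varphi}g}=e^{-2\varphi}(\Delta_g+(n-2)\langle\nabla\varphi,\nabla\cdot\rangle)$ with $\varphi=-\log c$. Hence
\[
\partial_t^2u-c^2\Delta_gu+P_1u=\partial_t^2u-\Delta_{g_c}u+\widetilde P_1u,
\]
where $\widetilde P_1$ is again a first-order operator with bounded measurable coefficients. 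The norms $|\nabla u|_g$ and $|\nabla u|_{g_c}$ are equivalent on $\overline{U_1}$, and so are the measures $\dd v_g$ and $\dd v_{g_c}$. So it suffices to prove the estimate for $\Box_c=\partial_t^2-\Delta_{g_c}$ with weighted norms taken in $g_c$. The lower-order term costs $\ww{\widetilde P_1u}\le C(\ww{\nabla u}+\ww{\partial_tu}+\ww{u})$. For $s\ge s_0$ large this is absorbed into the left-hand side, which carries the factor $s$ on the first-order terms and $s^3$ on $u$. This is why $s_0$ may depend on $P_1$, while $\gamma_0$ must not.

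\emph{Step 2: the Carleman estimate for $\Box_c$.} I would apply the Riemannian Carleman estimate of \cite[Theorem~4.2 / Ch.~4]{BY17} for $\partial_t^2-\Delta_{g_c}$, with weight $\psi=d-\beta(t-T/2)^2+\beta_0$ and $\phi=e^{\gamma_0\psi}$. Its hypotheses are checked with \eqref{eq:convex}:
\begin{itemize}[label={}]
\item strict convexity $\nabla^2_{g_c}d\ge2\rho\,g_c$ on $\overline{U_1}$, together with the choice $0<\beta<\rho$;
\item $|\nabla d|_{g_c}\ge c_{\min}^{-1}\varpi>0$, since $|\nabla d|_{g_c}=c\,|\nabla d|_g$ and $c>0$ on the compact set $\overline{U_1}$.
\end{itemize}
These conditions give strong pseudoconvexity of $\psi$ for $\Box_c$ on $\overline{I_T}\times\overline{U_1}$. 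The compact support of $u$ in $I_T\times U_1$ removes all boundary terms. The result is
\[
s\gamma_0\ww{\phi^{1/2}\nabla u}+s\gamma_0\ww{\phi^{1/2}\partial_tu}+s^3\gamma_0^4\ww{\phi^{3/2}u}\le C\ww{\Box_cu}
\]
for $\gamma_0\ge\gamma_*$ and $s\ge s_*(\gamma_0)$. Since $\phi\ge1$, the weights $\phi^{1/2}$ and $\phi^{3/2}$ can be dropped.

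\emph{Step 3: a common $\gamma_0$.} The estimate is applied for $c=c_S$ and for $c=c_P$. I would take $\gamma_0$ to be the maximum of the two thresholds $\gamma_*$. Any larger $\gamma_0$ also works, and $\gamma_*$ depends only on $\rho,\beta,T,\varpi$ and the metrics, never on $P_1$. With $\gamma_0$ fixed, $s_0$ is chosen large enough to absorb $\widetilde P_1$ as in Step 1.

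\emph{Main obstacle.} The main risk is whether \cite{BY17} states the estimate with exactly the pseudoconvexity condition furnished by \eqref{eq:convex}, and whether its constant $\gamma_*$ is uniform in the lower-order terms. If the cited version is not directly applicable, my fallback is to prove the estimate by hand. I would conjugate with $e^{s\phi}$, split the conjugated operator into its symmetric and antisymmetric parts, and integrate the commutator by parts. The positive interior term comes from $\nabla^2_{g_c}\psi-\beta\,\dd t^2$ after a large $\gamma_0$ amplifies $|\nabla\psi|^2$. Positivity rests on $\beta<\rho$ and $|\nabla d|\ne0$.
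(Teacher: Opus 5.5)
Your proposal is correct and follows the paper's proof. The paper also rewrites $\partial_t^2-c^2\Delta_g$ as $\partial_t^2-\Delta_{g_c}$ plus a first-order term via the conformal change $g_c=c^{-2}g$, and checks the convexity and gradient hypotheses of \cite{BY17} from \eqref{eq:convex}; there it invokes Theorem~4.1 with $\rho'=(\beta+\rho)/2$, so that the convexity is strict and $\beta<\rho'$. It then takes $\gamma_0$ above both thresholds for $c_S$ and $c_P$ and absorbs $P_1$ for large $s$. One harmless slip: the gradient bound should read $|\nabla_{g_c}d|_{g_c}=c\,|\nabla d|\ge\varpi\min_M c$, not $c_{\min}^{-1}\varpi$, and only its positivity is used.
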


\begin{proof}
For a conformal metric $\hat g=e^{2\varphi}g$ one has
\[
\Delta_{\hat g}u=e^{-2\varphi}\bigl(\Delta_gu+(n-2)\langle\nabla\varphi,\nabla u\rangle\bigr).
\]
With $\hat g=g_c$ and $\varphi=-\log c$ we obtain
\[
\partial_t^2u-c^2\Delta_gu=\partial_t^2u-\Delta_{g_c}u-(n-2)\,c\,\langle\nabla c,\nabla u\rangle .
\]
By \eqref{eq:convex}, $\rho'=(\beta+\rho)/2$ satisfies $\nabla^2_{g_c}d(X,X)>2\rho'|X|^2_{g_c}$ for $X\ne0$, and $|\nabla_{g_c}d|_{g_c}=c|\nabla d|\ge\varpi\min_Mc$ on $\overline{U_1}$. Hence the compact manifold $(\overline{U_1},g_c)$ and the positive function $d+1+\max_{\overline{U_1}}|d|$ satisfy the assumptions called (A.1) and (A.2) in \cite[Section~4.2]{BY17} with the constant $\rho'$, and $0<\beta<\rho'$ because $\beta<\rho$. We apply \cite[Theorem~4.1]{BY17} on the time interval $I_T$, with $t_0=T/2$, to the operator $\partial_t^2-\Delta_{g_c}-(n-2)c\langle\nabla c,\nabla\,\cdot\,\rangle$. Since $u$ vanishes near $\partial U_1$ and near the endpoints of $I_T$, the boundary integral in that theorem vanishes, and we obtain $\gamma_*(c)>0$ such that, for $\gamma_0>\gamma_*(c)$ and $s$ large,
\[
\int_{I_T}\!\int_{U_1}e^{2s\phi}\sigma\bigl(|\nabla_{g_c}u|_{g_c}^2+|\partial_tu|^2+\sigma^2|u|^2\bigr)\dd v_{g_c}\dd t\le C\int_{I_T}\!\int_{U_1}e^{2s\phi}|\partial_t^2u-c^2\Delta_gu|^2\dd v_{g_c}\dd t,
\]
where $\sigma=s\gamma_0\phi$. Here the weight of \cite[(4.2)--(4.3)]{BY17}, with $\psi_0=d+1+\max_{\overline{U_1}}|d|$ and with the constant $\beta(T+2)^2/4$ in place of $\beta_0$, coincides with \eqref{eq:weight}. Since $e^{\gamma_0}\le\phi\le e^{\gamma_0\max\psi}$, $\dd v_{g_c}=c^{-n}\dd v$ and $|\nabla_{g_c}u|^2_{g_c}=c^2|\nabla u|^2$, we obtain the claim for $P_1=0$ with $\gamma_0=1+\max\{\gamma_*(c_S),\gamma_*(c_P)\}$. Finally, $|P_1u|^2\le C(|\nabla u|^2+|\partial_tu|^2+|u|^2)$, and the contribution of $P_1$ to the right-hand side is absorbed by the left-hand side for $s$ large.
\end{proof}

We now extend Lemma~\ref{lem:carleman-scalar} to forms. In local coordinates, the principal symbol of $\DHo$ acting on $k$-forms is $|\xi|^2\mathrm{Id}$, because $\varepsilon_\xi\iota_\xi+\iota_\xi\varepsilon_\xi=|\xi|^2$ for the exterior and interior multiplications by $\xi$. Hence, for every $k$-form $\zeta$,
\begin{equation}\label{eq:DH-coord}
(\DHo\zeta)_I=-\Delta_g(\zeta_I)+\sum_J\bigl(b_{IJ}^j\partial_j\zeta_J+c_{IJ}\zeta_J\bigr)
\end{equation}
in every coordinate chart, with smooth coefficients $b_{IJ}^j$ and $c_{IJ}$.

\begin{lemma}\label{lem:carleman-forms}
Assume \Lh{} and \Gh{}. Let $k\in\{0,1,2\}$, $c\in\{c_S,c_P\}$, and let $\mathcal T$ be a first-order differential operator in $(t,x)$ on $k$-forms with bounded measurable coefficients. There exist $s_0>0$ and $C>0$ such that
\[
s\ww{\nabla\zeta}+s\ww{\partial_t\zeta}+s^3\ww{\zeta}\le C\ww{\partial_t^2\zeta+c^2\DHo\zeta+\mathcal T\zeta},\qquad s\ge s_0,
\]
for every $k$-form $\zeta\in H^2((0,T)\times M)$ with compact support in $(0,T)\times U$.
\end{lemma}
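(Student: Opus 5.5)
The plan is to reduce Lemma~\ref{lem:carleman-forms} to the scalar estimate of Lemma~\ref{lem:carleman-scalar} componentwise. We localize with a partition of unity and use \eqref{eq:DH-coord} to see that $\DHo$ is diagonal at principal order in coordinates.

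\emph{Partition of unity.} Cover the compact set $\overline U$ by finitely many coordinate charts $W_1,\dots,W_m$, each contained in $U_1$, and choose $\theta_\ell\in C^\infty_c(W_\ell)$ with $\sum_\ell\theta_\ell^2=1$ on a neighbourhood of $\overline U$. Since $\zeta$ has compact support in $(0,T)\times U$, we have $\zeta=\sum_\ell\theta_\ell^2\zeta$. Moreover, each $\zeta^\ell=\theta_\ell\zeta$ has compact support in $(0,T)\times W_\ell\subset I_T\times U_1$.

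\emph{Scalar estimate on each component.} In the chart $W_\ell$, write $\zeta^\ell_I$ for the components of $\zeta^\ell$ in the coordinate basis $\dd x^I$. By \eqref{eq:DH-coord},
\[
\bigl(\partial_t^2\zeta^\ell+c^2\DHo\zeta^\ell\bigr)_I=\partial_t^2\zeta^\ell_I-c^2\Delta_g\zeta^\ell_I+c^2\sum_J\bigl(b^j_{IJ}\partial_j\zeta^\ell_J+c_{IJ}\zeta^\ell_J\bigr).
\]
We apply Lemma~\ref{lem:carleman-scalar} with $P_1=0$ to each function $\zeta^\ell_I$ and sum over $I$. The coupling terms and the components of $\mathcal T\zeta^\ell$ are first order, so they are bounded pointwise by $C(|\nabla\zeta^\ell|+|\partial_t\zeta^\ell|+|\zeta^\ell|)$ in terms of the full vector of components. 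Their weighted $L^2$ norms are therefore absorbed by the left-hand side for $s$ large. This absorption is the only point where the componentwise lemma, whose $P_1$ acts on a single function, must be replaced by a system version. It works because the constant $C$ of Lemma~\ref{lem:carleman-scalar} with $P_1=0$ does not depend on the absorbed terms. On the compact chart, the Euclidean and $g$-norms of the components and of their derivatives are equivalent, up to zero-order terms coming from differentiating the frame. Hence
\[
s\ww{\nabla\zeta^\ell}+s\ww{\partial_t\zeta^\ell}+s^3\ww{\zeta^\ell}\le C\ww{\partial_t^2\zeta^\ell+c^2\DHo\zeta^\ell+\mathcal T\zeta^\ell}.
\]

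\emph{Commutators and summation.} The commutator $[\partial_t^2+c^2\DHo+\mathcal T,\theta_\ell]\zeta$ is a first-order operator in $x$ applied to $\zeta$; no time derivative of $\theta_\ell$ appears. Its weighted norm is therefore bounded by $C(\ww{\nabla\zeta}+\ww{\zeta})$. Summing over $\ell$ and using $|\zeta|^2=\sum_\ell|\zeta^\ell|^2$ together with $|\nabla\zeta|^2\le C\sum_\ell(|\nabla\zeta^\ell|^2+|\zeta|^2)$ on the support, we get
\[
s\ww{\nabla\zeta}+s\ww{\partial_t\zeta}+s^3\ww{\zeta}\le C\ww{\partial_t^2\zeta+c^2\DHo\zeta+\mathcal T\zeta}+C\ww{\nabla\zeta}+C\ww{\zeta}.
\]
The last two terms are absorbed by the left-hand side once $s\ge s_0$ is large.

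We expect no serious obstacle. The only care needed is that the weight $\phi$, the parameter $\gamma_0$ and the convexity are the same on every chart, and they are, because Lemma~\ref{lem:carleman-scalar} is global on $U_1$ with a $\gamma_0$ that does not depend on $P_1$. The $H^2$ regularity of $\zeta$ is exactly what is needed to apply the scalar lemma to each component $\zeta^\ell_I$.
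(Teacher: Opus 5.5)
Your proposal is correct and follows essentially the paper's proof: you localize with a partition of unity subordinate to coordinate charts, apply Lemma~\ref{lem:carleman-scalar} with $P_1=0$ to each coordinate component using \eqref{eq:DH-coord}, and absorb for $s$ large the first-order couplings, the terms coming from $\mathcal T$ and the commutators with the cut-offs. The differences are cosmetic. You use $\sum_\ell\theta_\ell^2=1$ and localize in two stages, whereas the paper uses $\sum_m\chi_m=1$ and estimates $\chi_m\zeta_I$ in one pass; in either version the bound $|\nabla\zeta|^2\le C\sum_\ell(|\nabla\zeta^\ell|^2+|\zeta|^2)$ also produces a term $Cs\ww{\zeta}$, which is absorbed by $s^3\ww{\zeta}$.
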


\begin{proof}
Cover $\overline U$ by finitely many open sets $W_1,\dots,W_J$ with $\overline{W_m}\subset W_m'$ and $\overline{W_m'}\subset U_1$, where each $\overline{W_m'}$ is contained in the domain $W_m''$ of a coordinate chart, which we use on $W_m$ and on $W_m'$. Let $\chi_1,\dots,\chi_J$ be smooth functions with $\supp\chi_m\subset W_m$ and $\sum_m\chi_m=1$ on a neighbourhood of $\overline U$. For each $m$ and each multi-index $I$, the function $u_{m,I}=\chi_m\zeta_I$, extended by zero, belongs to $H^2((0,T)\times M)$ and has compact support in $(0,T)\times U$. Writing $\PP_c\zeta=\partial_t^2\zeta+c^2\DHo\zeta+Q\zeta$, \eqref{eq:DH-coord} yields
\[
\bigl|\bigl(\partial_t^2-c^2\Delta_g\bigr)u_{m,I}-\chi_m(\PP_c\zeta)_I\bigr|\le C\bigl(|\partial_t\zeta|+|\nabla\zeta|+|\zeta|\bigr),
\]
where the commutator $[\Delta_g,\chi_m]$ and the lower-order terms in \eqref{eq:DH-coord} and in $\mathcal T$ have been estimated by the pointwise norms of $\partial_t\zeta$, $\nabla\zeta$ and $\zeta$, which are equivalent to the coordinate norms on $\supp\chi_m$. Lemma~\ref{lem:carleman-scalar} with $P_1=0$ gives
\[
s\ww{\nabla u_{m,I}}+s\ww{\partial_tu_{m,I}}+s^3\ww{u_{m,I}}\le C\ww{\PP_c\zeta}+C\ww{\partial_t\zeta}+C\ww{\nabla\zeta}+C\ww{\zeta}.
\]
Since $\sum_m\chi_m=1$ on the support of $\zeta$, we have $|\zeta|^2\le C\sum_{m,I}|u_{m,I}|^2$, $|\partial_t\zeta|^2\le C\sum_{m,I}|\partial_tu_{m,I}|^2$ and $|\nabla\zeta|^2\le C\sum_{m,I}\bigl(|\nabla u_{m,I}|^2+|\zeta|^2\bigr)$. Summing over $m$ and $I$, we obtain
\[
s\ww{\nabla\zeta}+s\ww{\partial_t\zeta}+s^3\ww{\zeta}\le C\ww{\PP_c\zeta}+C\ww{\partial_t\zeta}+C\ww{\nabla\zeta}+Cs\ww{\zeta},
\]
and the last three terms are absorbed for $s$ large.
\end{proof}

The second estimate concerns sources in divergence form. It is the Riemannian version of the Carleman estimate in negative norms of Imanuvilov, Isakov and Yamamoto \cite[Theorem~3.2]{IIY03}, which goes back to Ruiz \cite{Ruiz92} and to Imanuvilov, see the references in \cite{IIY03}, for the wave equation.

\begin{lemma}\label{lem:carleman-neg}
Assume \Lh{} and \Gh{}. Let $k\in\{0,1,2\}$ and $c\in\{c_S,c_P\}$, and let $\mathcal D_1,\dots,\mathcal D_N$ be first-order differential operators in $x$ with smooth coefficients, $\mathcal D_m$ acting on $k_m$-forms with values in $k$-forms. There exist $s_0>0$ and $C>0$ with the following property, for $s\ge s_0$. If $\zeta\in L^2((0,T)\times M)$ is a $k$-form with compact support in $(0,T)\times U$, and
\[
\partial_t^2\zeta+c^2\DHo\zeta=\sum_{m=1}^N\mathcal D_mG_m+G_0\qquad\text{in } \mathcal D'((0,T)\times\inte M),
\]
where $G_m\in L^2((0,T)\times M)$ is a $k_m$-form and $G_0\in L^2((0,T)\times M)$ is a $k$-form, all with compact support in $(0,T)\times U$, then
\[
s\ww{\zeta}\le C\sum_{m=1}^N\ww{G_m}+Cs^{-2}\ww{G_0}.
\]
\end{lemma}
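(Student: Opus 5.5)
The plan is to prove Lemma~\ref{lem:carleman-neg} by duality, following \cite[Theorem~3.2]{IIY03}. The dual ingredient is a Carleman estimate with a source in $L^2$ and control of one derivative. First I will localize and reduce to scalar functions exactly as in the proof of Lemma~\ref{lem:carleman-forms}. Cut off with the partition $\chi_m$ and pass to coordinate components. By \eqref{eq:DH-coord}, each component $u=\chi_m\zeta_I$ satisfies
\[
\partial_t^2u-c^2\Delta_gu=\sum_m\tilde{\mathcal D}_m\tilde G_m+\tilde G_0+\mathcal E,
\]
where $\mathcal E$ collects the commutator and the lower-order terms $b^j_{IJ}\partial_j\zeta_J$ and $c_{IJ}\zeta_J$. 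The term $b\,\partial_j\zeta_J=\partial_j(b\zeta_J)-(\partial_jb)\zeta_J$ is itself in divergence form with $G=b\zeta_J$. This is why the estimate must be stable under first-order divergence-form perturbations with $G\sim\zeta$. Its contribution $C\ww{\zeta}$ is absorbed by $s\ww{\zeta}$ for $s$ large. So it suffices to prove the scalar estimate
\[
s\ww{u}\le C\sum_j\ww{g_j}+Cs^{-2}\ww{g_0},\qquad \partial_t^2u-c^2\Delta_gu=\sum_j\partial_jg_j+g_0,
\]
for $u$ compactly supported in $(0,T)\times U_1$. The $\partial_t$ direction is not needed, since the $\mathcal D_m$ act in $x$ only.

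For the scalar estimate I will use the standard duality (Imanuvilov--Ruiz) argument. Consider the minimization problem
\[
J(z)=\tfrac12\ww{z}\,\text{(weight $e^{-2s\phi}$)}+\tfrac{1}{2s}\int e^{-2s\phi}|P^*z-s\,e^{2s\phi}u|^2\cdots,
\]
or equivalently use the adjoint construction in \cite[Appendix]{IIY03}. The goal is to produce $z$ solving $P^*z=s\,e^{2s\phi}u$ in a Carleman sense with bounds
\[
s\int e^{-2s\phi}\bigl(|\nabla z|^2+s^2|z|^2\bigr)\le C\,s\ww{u}.
\]
These bounds come from applying the $H^1$ Carleman estimate of Lemma~\ref{lem:carleman-scalar} to the conjugated operator with weight $e^{-s\phi}$. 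That is the same pseudoconvexity, since $P=\partial_t^2-\Delta_{g_c}+$ first order is formally self-adjoint up to lower order. Then test the equation for $u$ against $z$:
\[
s\ww{u}=\int u\,P^*z=\sum_j\int g_j\,(-\partial_jz)+\int g_0z .
\]
Cauchy--Schwarz with the weights gives
\[
s\ww{u}\le\bigl(\ww{g}\bigr)^{1/2}\bigl(s\ww{u}\bigr)^{1/2}+\bigl(s^{-2}\ww{g_0}\bigr)^{1/2}\bigl(s\ww{u}\bigr)^{1/2},
\]
which gives the claim. Since the paper states that the proof is reproduced in Appendix~\ref{app:IIY}, I would model it precisely on \cite{IIY03}. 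The only Riemannian input is Lemma~\ref{lem:carleman-scalar} on the compact manifold $(\overline{U_1},g_c)$.

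The main obstacle is the construction of the dual solution $z$ with the weighted bounds. Since the support of $z$ cannot be forced to be compact, boundary terms at $\partial U_1$ and at $t\in\partial I_T$ must be handled. The approach I would try first is the variational formulation of \cite{IIY03}. Minimize over $z$ in a weighted space the functional
\[
\tfrac12\int e^{-2s\phi}|P^*z|^2+\tfrac{s^3}{2}\int e^{-2s\phi}|z|^2-\int zu\,e^{0},
\]
with the boundary terms controlled by the Carleman estimate with boundary integrals from \cite[Theorem~4.1]{BY17}. Coercivity of this functional is exactly the Carleman estimate for $e^{-s\phi}$. I expect this is also where $\beta<\rho$ and the choice of $\gamma_0$ from Lemma~\ref{lem:carleman-scalar} are used. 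A second point needing care is that $\zeta$ is only in $L^2$. I would handle it by mollification in $(t,x)$ inside $(0,T)\times U$, noting that the commutators give divergence-form errors that are small in $L^2$, and then pass to the limit.
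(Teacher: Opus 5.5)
Your reduction to scalar components matches the paper: partition of unity, \eqref{eq:DH-coord}, first-order terms rewritten in divergence form with $G\sim\zeta$, and absorption of $C\ww{\zeta}$. So does your final mollification step.

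\textbf{The gap.} The gap is the step you flag yourself: the existence of $z$ with $P^*z=s\,e^{2s\phi}u$ and $\int e^{-2s\phi}(|\nabla z|^2+s^2|z|^2)\le Cs\ww{u}$. Your justification fails on two counts.

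First, no Carleman estimate with weight $e^{-s\phi}$ is available. Along every shear or pressure ray $x(t)$, \eqref{eq:convex} gives $\frac{d^2}{dt^2}\psi(t,x(t))=\nabla^2_{g_c}d(\dot x,\dot x)-2\beta\ge2(\rho-\beta)>0$. Replacing $\phi$ by $-\phi$ reverses this sign, so $-\phi$ is not pseudoconvex. That $P^*$ has the same principal part as $P$ does not change the sign of the weight. (The functional you write also has Euler--Lagrange equation $P(e^{-2s\phi}P^*z)+s^3e^{-2s\phi}z=u$, not $P^*z=s\,e^{2s\phi}u$.)

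Second, and more fundamentally, duality with the admissible weight cannot produce $\nabla z$. Apply Hahn--Banach to $Pv\mapsto s\int e^{2s\phi}uv$, which is bounded by Lemma~\ref{lem:carleman-scalar}. This gives $z$ with $\|e^{-s\phi}z\|\le Cs^{-1/2}\ww{u}^{1/2}$ and nothing more. That suffices for $g_0$ but not for $\int g_j\,\partial_jz$. In general, dualizing the $H^1$ Carleman estimate gives adjoint solutions in weighted $L^2$ for data in weighted $H^{-1}$. What you need is adjoint solutions in weighted $H^1$ for data in $L^2$, and by the closed range theorem that is equivalent to the estimate you are proving. For a hyperbolic adjoint equation there is no regularity gain to recover $\nabla z$ afterwards, unlike parabolic duality arguments. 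The boundary integrals of \cite[Theorem~4.1]{BY17} do not supply it either.

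\textbf{The missing idea.} What is needed is a shift of the $H^1$ Carleman estimate by one derivative. This is what \cite[Section~3]{IIY03} does, and it is not a duality argument.
\begin{itemize}
\item Let $P_s=e^{s\phi}Pe^{-s\phi}$ and let $\Lambda_s$ be the Fourier multiplier with symbol $\langle\xi\rangle+s$. The commutator $[\Lambda_s^{-1},P_s]$ is bounded on $L^2$ uniformly in $s$ (Lemma~\ref{lem:B2}).
\item Apply the $H^1$ estimate to $\chi\Lambda_s^{-1}w$, and control $\Lambda_s^{-1}w$ away from $\supp w$ by the decay of its kernel. This gives $s^3\|\Lambda_s^{-1}w\|^2+s\|w\|^2\le C\|\Lambda_s^{-1}P_sw\|^2$ (Lemma~\ref{lem:B3}). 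The factor $s$ in front of $\|w\|^2$ is what absorbs the commutator error $C\|w\|^2$.
\item For $w=e^{s\phi}u$ one has $P_sw=e^{s\phi}g_0+\sum_j\partial_j(e^{s\phi}g_j)-s\sum_j(\partial_j\phi)e^{s\phi}g_j$. Together with $\|\Lambda_s^{-1}\partial_j\|\le1$ and $\|\Lambda_s^{-1}\|\le s^{-1}$, this yields the claimed estimate directly, with no dual solution.
\end{itemize}
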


\begin{proof}
We use the charts and the partition of unity of the proof of Lemma~\ref{lem:carleman-forms}, and we write $x=(x^1,\dots,x^n)$ for the coordinates in $W_\ell''$ and $x^0=t$. Let $u_{\ell,I}=\chi_\ell\zeta_I$. In the coordinates of $W_\ell''$, the operator $P_0=\partial_t^2-c^2g^{jl}\partial_j\partial_l$ has smooth coefficients, and
\[
P_0=\partial_t^2-c^2\Delta_g+P_1,\qquad P_1=-c^2g^{jl}\Gamma^k_{jl}\partial_k ,
\]
where $\Gamma^k_{jl}$ are the Christoffel symbols of $g$. By \eqref{eq:DH-coord}, the expression of $\Delta_g$ in coordinates, and the identity $b\,\partial_jv=\partial_j(b\,v)-(\partial_jb)v$, every first-order term in $\zeta$ or in $G_m$ with smooth coefficients can be written as $\sum_j\partial_j(\cdot)$ plus a zero-order term. Therefore
\[
P_0u_{\ell,I}=h_0+\sum_{j=1}^n\partial_jg_j\qquad\text{in }\mathcal D',
\]
where $|g_j|\le C(\sum_m|G_m|+|\zeta|)$ for $1\le j\le n$ and $|h_0|\le C(|G_0|+\sum_m|G_m|+|\zeta|)$. All these functions have compact support in $(0,T)\times(W_\ell\cap U)$.

Let
\begin{equation}\label{eq:QQstar}
Q=(0,T)\times x(W_\ell\cap U),\qquad Q^*=I_T\times x(W_\ell'\cap U_1),
\end{equation}
so that $\overline Q\subset Q^*$ and the coefficients of $P_0$ are smooth on $\overline{Q^*}$. We claim that there exist $s_0>0$ and $C>0$ such that, for every function $v\in L^2(\R^{n+1})$ with compact support in $Q$ satisfying $P_0v=h_0+\sum_{j=0}^n\partial_jg_j$ in $\mathcal D'$ with $h_0,g_0,\dots,g_n\in L^2$, where $\partial_0=\partial_t$,
\begin{equation}\label{eq:IIY}
s\int e^{2s\phi}|v|^2\,\dd x\,\dd t\le C\int e^{2s\phi}\Bigl(s^{-2}|h_0|^2+\sum_{j=0}^n|g_j|^2\Bigr)\dd x\,\dd t,\qquad s\ge s_0 .
\end{equation}
This is the Carleman estimate in negative norms of Imanuvilov, Isakov and Yamamoto \cite[Theorem~3.2]{IIY03}, whose proof we reproduce in Appendix~\ref{app:IIY} in the present setting. The only ingredient of that proof which depends on the geometry is the $H^1$ Carleman estimate for the conjugated operator, and in Appendix~\ref{app:IIY} it is supplied by Lemma~\ref{lem:carleman-scalar}.

Applying \eqref{eq:IIY} to $u_{\ell,I}$, comparing the coordinate measure with $\dd v$ on $\supp\chi_\ell$, and summing over $\ell$ and $I$, we obtain
\[
s\ww{\zeta}\le C\sum_{m=1}^N\ww{G_m}+Cs^{-2}\ww{G_0}+C\ww{\zeta},
\]
because $|\zeta|^2\le C\sum_{\ell,I}|u_{\ell,I}|^2$. The last term is absorbed for $s$ large.
\end{proof}

%=====================================================================
\section{Hodge--Carleman estimates}\label{sec:HC}
%=====================================================================

Recall that $\PP=\partial_t^2-\varrho^{-1}\Lam$. For a vector field $z$ we write
\[
|\partial^1z|^2=|\partial_tz|^2+|\nabla z|^2,\qquad |\partial^2z|^2=|\partial_t^2z|^2+|\nabla\partial_tz|^2+|\nabla^2z|^2 .
\]
For $z$ with compact support in $(0,T)\times\inte M$ we set $F=\PP z$, $y=\delta z$ and $w=\dd z$. By \eqref{eq:P-hodge}, \eqref{eq:hodge-P} and \eqref{eq:hodge-S},
\begin{align}
\bigl(\partial_t^2+c_S^2\DHo\bigr)z+\mathcal Qz&=F-(c_P^2-c_S^2)\,\dd y,\label{eq:Z}\\
\bigl(\partial_t^2+c_P^2\DHo\bigr)y-\bigl\langle\dd c_P^2+\varrho^{-1}\dd(\lambda+2\mu),\dd y\bigr\rangle&=\delta F+\bigl\langle\dd c_S^2+\varrho^{-1}\dd\mu,\delta w\bigr\rangle-\mathcal M_0z,\label{eq:W}\\
\bigl(\partial_t^2+c_S^2\DHo\bigr)w+\dd c_S^2\wedge\delta w-\varrho^{-1}\nabla_{\nabla\mu}w&=\dd F-\bigl(\dd c_P^2-\varrho^{-1}\dd\lambda\bigr)\wedge\dd y-\mathcal M_2z.\label{eq:Psi}
\end{align}

\begin{theorem}\label{thm:HC2}
Assume \Lh{} and \Gh{}, let $T>0$, $\beta\in(0,\rho)$, and let $\phi$ be given by \eqref{eq:weight}. There exist $s_0>0$ and $C>0$ such that
\begin{equation}\label{eq:HC2}
s^4\ww{z}+s^2\ww{\partial^1z}+\ww{\partial^2z}\le C\bigl(s\ww{\PP z}+\ww{\nabla\PP z}\bigr),\qquad s\ge s_0,
\end{equation}
for every vector field $z\in C^\infty_c((0,T)\times U;TM)$.
\end{theorem}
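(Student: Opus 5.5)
The plan is to apply the form-valued Carleman estimate, Lemma~\ref{lem:carleman-forms}, separately to the three unknowns $z$, $y=\delta z$ and $w=\dd z$, using the system \eqref{eq:Z}--\eqref{eq:Psi}. Since $z\in C^\infty_c((0,T)\times U;TM)$, so are $y$ and $w$, and they have compact support in $(0,T)\times U$.

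\emph{Estimates for $y$ and $w$.} The left-hand side of \eqref{eq:W} is $(\partial_t^2+c_P^2\DHo)y$ plus a first-order term in $y$, so Lemma~\ref{lem:carleman-forms} with $k=0$, $c=c_P$ applies. Likewise, the left-hand side of \eqref{eq:Psi} is $(\partial_t^2+c_S^2\DHo)w$ plus a first-order term in $w$, so the same lemma with $k=2$, $c=c_S$ applies. This gives
\[
s\ww{\partial^1 y}+s^3\ww{y}+s\ww{\partial^1 w}+s^3\ww{w}\le C\Bigl(\ww{\nabla F}+\ww{\nabla w}+\ww{\nabla y}+\ww{\partial^1 z}+\ww{z}\Bigr),
\]
where $\ww{\partial^1 y}$ means $\ww{\partial_t y}+\ww{\nabla y}$, $F=\PP z$, and we used $|\delta F|+|\dd F|\le C|\nabla F|$. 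The coupling terms $\ww{\nabla y}$ and $\ww{\nabla w}$ on the right are absorbed for $s$ large. This leaves
\[
s\ww{\partial^1 y}+s^3\ww{y}+s\ww{\partial^1 w}+s^3\ww{w}\le C\bigl(\ww{\nabla F}+\ww{\partial^1 z}+\ww{z}\bigr).
\]

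\emph{Estimate for $z$.} Lemma~\ref{lem:carleman-forms} with $k=1$, $c=c_S$ applied to \eqref{eq:Z} gives
\[
s\ww{\partial^1z}+s^3\ww{z}\le C\ww{F}+C\ww{\dd y}.
\]
Multiplying by $s$ and inserting the bound $s\ww{\nabla y}\le C(\ww{\nabla F}+\ww{\partial^1 z}+\ww{z})$, the right-hand side becomes $Cs\ww{F}+C\ww{\nabla F}+C\ww{\partial^1z}+C\ww{z}$. The last two terms are absorbed into $s^2\ww{\partial^1 z}+s^4\ww{z}$. This yields the first two terms of \eqref{eq:HC2}, and also $s^3(\ww{y}+\ww{w})$ and $s(\ww{\nabla y}+\ww{\nabla w})$, all bounded by the right-hand side of \eqref{eq:HC2}.

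\emph{Second derivatives.} This is the step I expect to be the main obstacle, because of the weight. Lemma~\ref{lem:gaffney} and Lemma~\ref{lem:interior-elliptic} are unweighted, so they cannot be used directly.

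\begin{itemize}
\item For $\nabla^2 z$: apply \eqref{eq:gaffney2} to $e^{s\phi}z$, which has compact support in the interior. Then $\nabla(e^{s\phi}z)$, $\dd(e^{s\phi}z)$ and $\delta(e^{s\phi}z)$ differ from $e^{s\phi}(\nabla z,w,y)$ by terms $s|\nabla\phi|e^{s\phi}|z|$, with one more power of $s$ at each derivative. This gives
\[
\ww{\nabla^2z}\le C\Bigl(\ww{\nabla w}+\ww{\nabla y}+s^2\ww{\partial^1z}+s^2(\ww{w}+\ww{y})+s^4\ww{z}\Bigr),
\]
and every term on the right is already controlled.
\item For $\nabla\partial_t z$: apply the first-order estimate of Lemma~\ref{lem:carleman-forms} to $\partial_t z$. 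This needs $\ww{\PP\partial_t z}=\ww{\partial_tF}$, which is not available, so instead use the weighted identity $\ww{\nabla\partial_t z}\le\ww{\nabla^2 z}+\ww{\partial_t^2 z}+Cs^2\ww{\partial^1 z}$. This follows from integrating by parts $\int e^{2s\phi}|\nabla\partial_tz|^2$ once in $t$ and once in $x$.
\item For $\partial_t^2 z$: use the equation itself, $\partial_t^2z=F+\varrho^{-1}\Lam z$, so $\ww{\partial_t^2z}\le C(\ww{F}+\ww{\nabla^2z}+\ww{\partial^1 z}+\ww{z})$.
\end{itemize}

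Combining the three bounds closes \eqref{eq:HC2}, possibly after enlarging $s_0$. If the integration-by-parts bound for $\nabla\partial_tz$ turns out circular, I will instead apply Lemma~\ref{lem:carleman-forms} to $e^{s\phi}$-free quantities obtained by commuting one time derivative, using $\partial_t\phi=O(1)$, and estimate the commutator terms with a loss of one power of $s$.
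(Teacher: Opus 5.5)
Your proof is correct. Step~1, the bound for $\ww{\nabla^2z}$ via \eqref{eq:gaffney2} applied to $e^{s\phi}z$, and the bound for $\ww{\partial_t^2z}$ from the equation coincide with the paper's argument. The three applications of Lemma~\ref{lem:carleman-forms}, the absorption of $\ww{\nabla y}+\ww{\nabla w}$, and multiplying the $z$-estimate by $s$ are equivalent to what the paper does.

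The genuine difference is the mixed derivative $\nabla\partial_tz$.

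\emph{The paper's route.} It applies \eqref{eq:gaffney1} to $e^{s\phi}\partial_tz$. The exterior derivative and codifferential of this field are expressed through $\partial_tw$, $\partial_ty$ and $s\,\partial_tz$, which gives
\[
\ww{\nabla\partial_tz}\le C\bigl(\ww{\partial_tw}+\ww{\partial_ty}+s^2\ww{\partial_tz}\bigr).
\]
The right-hand side is already controlled, because Lemma~\ref{lem:carleman-forms} also bounds $s\ww{\partial_ty}+s\ww{\partial_tw}$. Your Step~1 contains this through $\ww{\partial^1y}$ and $\ww{\partial^1w}$, but you do not use it.

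\emph{Your route.} You integrate $\int e^{2s\phi}|\nabla\partial_tz|^2$ by parts once in $t$ and once in $x$. Since $|\partial_t\phi|+|\nabla\phi|\le C$, Young's inequality gives
\[
\ww{\nabla\partial_tz}\le C\bigl(\ww{\nabla^2z}+\ww{\partial_t^2z}\bigr)+Cs^2\ww{\nabla z}.
\]
The constant in front of the first two terms is a generic $C$, for instance from $|\DB z|\le\sqrt n\,|\nabla^2z|$, not $1$ as you wrote; this is harmless. The argument is not circular: your bounds for $\nabla^2z$ and $\partial_t^2z$ involve only spatial derivatives of $z$, $\delta z$, $\dd z$ and the source. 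So your fallback is unnecessary, and it would not work as stated anyway, since running the Carleman estimate on $\partial_tz$ requires $\partial_t\PP z$, which does not appear in \eqref{eq:HC2}.

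\emph{What each buys.} The paper's route keeps the three second-order quantities independent and reuses the Hodge structure at no extra cost. Yours is more elementary and generic: it works for any compactly supported field once $\nabla^2z$ and $\partial_t^2z$ are controlled. The price is that you must proceed in the order $\nabla^2z$, then $\partial_t^2z$, then $\nabla\partial_tz$. Neither route loses a power of $s$.
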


\begin{proof}
All the unknowns are smooth and have compact support in $(0,T)\times U$.

\emph{Step 1.} We apply Lemma~\ref{lem:carleman-forms} to \eqref{eq:W} ($k=0$, $c=c_P$), to \eqref{eq:Psi} ($k=2$, $c=c_S$) and to \eqref{eq:Z} ($k=1$, $c=c_S$), the operator $\mathcal T$ being the first-order operator acting on the unknown in the left-hand side. Since
\[
|\delta F|+|\dd F|\le C|\nabla F|,\qquad |\mathcal M_0z|+|\mathcal M_2z|\le C(|\nabla z|+|z|),\qquad |\delta w|\le C|\nabla w|,\qquad |\dd y|=|\nabla y|,
\]
we obtain
\begin{align*}
s\ww{\partial^1y}+s^3\ww{y}&\le C\bigl(\ww{\nabla F}+\ww{\nabla w}+\ww{\nabla z}+\ww{z}\bigr),\\
s\ww{\partial^1w}+s^3\ww{w}&\le C\bigl(\ww{\nabla F}+\ww{\nabla y}+\ww{\nabla z}+\ww{z}\bigr),\\
s\ww{\partial^1z}+s^3\ww{z}&\le C\bigl(\ww{F}+\ww{\nabla y}\bigr).
\end{align*}
Adding the first two inequalities and absorbing $C\ww{\nabla y}+C\ww{\nabla w}$ for $s$ large, we find
\begin{equation}\label{eq:st1a}
s\ww{\partial^1y}+s^3\ww{y}+s\ww{\partial^1w}+s^3\ww{w}\le C\bigl(\ww{\nabla F}+\ww{\nabla z}+\ww{z}\bigr).
\end{equation}
Inserting \eqref{eq:st1a} into the third inequality,
\[
s\ww{\partial^1z}+s^3\ww{z}\le C\ww{F}+Cs^{-1}\bigl(\ww{\nabla F}+\ww{\nabla z}+\ww{z}\bigr),
\]
and absorbing the last terms we find, for $s$ large,
\begin{equation}\label{eq:st1c}
s\ww{\partial^1z}+s^3\ww{z}\le C\ww{F}+Cs^{-1}\ww{\nabla F}.
\end{equation}
Using \eqref{eq:st1c} in \eqref{eq:st1a} we also obtain
\begin{equation}\label{eq:st1d}
s\ww{\partial^1y}+s^3\ww{y}+s\ww{\partial^1w}+s^3\ww{w}\le C\ww{\nabla F}+Cs^{-1}\ww{F}.
\end{equation}

\emph{Step 2 (second spatial derivatives).} For fixed $t$, let $\zeta=e^{s\phi}z$, which has compact support in $U$. We have
\[
\dd\zeta=e^{s\phi}\bigl(w+s\,\dd\phi\wedge z\bigr),\qquad \delta\zeta=e^{s\phi}\bigl(y-s\langle\nabla\phi,z\rangle\bigr).
\]
Since $\phi$ and its spatial derivatives up to order two are bounded on $[0,T]\times\overline U$, we obtain, for $s\ge1$,
\[
|\nabla\dd\zeta|+|\nabla\delta\zeta|\le Ce^{s\phi}\bigl(|\nabla w|+|\nabla y|+s|w|+s|y|+s|\nabla z|+s^2|z|\bigr),\qquad |\nabla\zeta|\le Ce^{s\phi}\bigl(|\nabla z|+s|z|\bigr).
\]
Moreover
\[
e^{s\phi}|\nabla^2z|\le|\nabla^2\zeta|+Cs|\nabla\zeta|+Cs^2|\zeta| .
\]
Applying \eqref{eq:gaffney2} to $\zeta$ and integrating in $t$, we get
\[
\ww{\nabla^2z}\le C\bigl(\ww{\nabla w}+\ww{\nabla y}+s^2\ww{w}+s^2\ww{y}+s^2\ww{\nabla z}+s^4\ww{z}\bigr),
\]
and \eqref{eq:st1c}, \eqref{eq:st1d} yield
\begin{equation}\label{eq:st2}
\ww{\nabla^2z}\le C\bigl(s\ww{F}+\ww{\nabla F}\bigr).
\end{equation}

\emph{Step 3 (derivatives involving time).} Let $\zeta_1=e^{s\phi}\partial_tz$. As in Step 2,
\[
\dd\zeta_1=e^{s\phi}\bigl(\partial_tw+s\,\dd\phi\wedge\partial_tz\bigr),\qquad \delta\zeta_1=e^{s\phi}\bigl(\partial_ty-s\langle\nabla\phi,\partial_tz\rangle\bigr).
\]
Applying \eqref{eq:gaffney1} to $\zeta_1$ and using \eqref{eq:st1c} and \eqref{eq:st1d},
\[
\ww{\nabla\partial_tz}\le C\bigl(\ww{\partial_tw}+\ww{\partial_ty}+s^2\ww{\partial_tz}\bigr)\le C\bigl(s\ww{F}+\ww{\nabla F}\bigr).
\]
Finally, $\partial_t^2z=\varrho^{-1}\Lam z+F$ and $|\Lam z|\le C(|\nabla^2z|+|\nabla z|+|z|)$, so that \eqref{eq:st1c} and \eqref{eq:st2} give $\ww{\partial_t^2z}\le C(s\ww{F}+\ww{\nabla F})$. Multiplying \eqref{eq:st1c} by $s$ and collecting the estimates, we obtain \eqref{eq:HC2}.
\end{proof}

\begin{theorem}\label{thm:HC0}
Assume \Lh{} and \Gh{}, let $T>0$, $\beta\in(0,\rho)$, and let $\phi$ be given by \eqref{eq:weight}. There exist $s_0>0$ and $C>0$ such that
\begin{equation}\label{eq:HC0}
s\ww{z}+s\ww{\delta z}+s\ww{\dd z}\le C\ww{\PP z},\qquad s\ge s_0,
\end{equation}
for every vector field $z\in H^1((0,T)\times M;TM)$ with compact support in $(0,T)\times U$ such that $\PP z\in L^2$.
\end{theorem}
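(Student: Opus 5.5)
The plan is to mirror Step~1 of the proof of Theorem~\ref{thm:HC2}, replacing Lemma~\ref{lem:carleman-forms} by the negative-norm estimate of Lemma~\ref{lem:carleman-neg}. We set $F=\PP z$, $y=\delta z$, $w=\dd z$. All three are in $L^2$ with compact support in $(0,T)\times U$, since $z\in H^1$. The identities \eqref{eq:Z}, \eqref{eq:W} and \eqref{eq:Psi} hold in $\mathcal D'$ by the distributional versions of Propositions~\ref{prop:three-forms} and~\ref{prop:hodge}. We then read each of them as a wave equation for $y$, $w$ and $z$ respectively, with a right-hand side of the form $\sum_m\mathcal D_mG_m+G_0$.

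\emph{The three equations.}
\begin{itemize}[label=--]
\item In \eqref{eq:W} ($k=0$, $c=c_P$), the first-order term $\langle\dd c_P^2+\varrho^{-1}\dd(\lambda+2\mu),\dd y\rangle$ is written as $\delta(\cdot\,y)$ plus a zero-order term in $y$. Likewise $\langle\cdot,\delta w\rangle=\delta$-type derivative of $w$ plus a zero-order term. $\delta F$ is a first-order operator applied to $G=F$, and $\mathcal M_0z$ is a first-order operator applied to $z$.
\item In \eqref{eq:Psi} ($k=2$, $c=c_S$), the same rewriting puts all first-order terms in $w$, $y$, $F$, $z$ in divergence form.
\item In \eqref{eq:Z} ($k=1$, $c=c_S$), $\mathcal Qz$ is a first-order operator of $z$. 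The source is $F$, together with $(c_P^2-c_S^2)\dd y=\dd((c_P^2-c_S^2)y)-y\,\dd(c_P^2-c_S^2)$.
\end{itemize}
In each case the first-order pieces are moved to the right as $\mathcal D_mG_m$ with $G_m\in\{z,y,w,F\}$, and the zero-order remainders go into $G_0$.

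\emph{Applying Lemma~\ref{lem:carleman-neg}.} This gives
\[
s\ww{y}\le C\bigl(\ww{F}+\ww{w}+\ww{y}+\ww{z}\bigr),\qquad s\ww{w}\le C\bigl(\ww{F}+\ww{w}+\ww{y}+\ww{z}\bigr),
\]
\[
s\ww{z}\le C\bigl(\ww{F}+\ww{y}+\ww{z}\bigr),
\]
where we used $s^{-2}\le1$ on the $G_0$ terms. Adding the three inequalities, the right-hand side is $C\ww{F}+C(\ww{y}+\ww{w}+\ww{z})$. The last group is absorbed by the left side for $s$ large, which yields \eqref{eq:HC0}.

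\emph{Main obstacle.} The delicate point is that Lemma~\ref{lem:carleman-neg} must be allowed to take the first-order terms in the unknown itself, e.g.\ $\dd y$ in the $y$-equation, as divergence-form sources with $G_m=y$. This is legitimate because the lemma only requires $G_m\in L^2$ with compact support, and the resulting $\ww{y}$ on the right carries no power of $s$, so it is absorbed. The same argument handles the $\nabla_{\nabla\mu}w$ and $\dd c_S^2\wedge\delta w$ terms.

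One must also check that $z\in H^1$ with $\PP z\in L^2$ suffices for the distributional identities. This holds since they are identities between differential operators applied to an $L^2_{\mathrm{loc}}$ field, as stated in Proposition~\ref{prop:hodge}. Finally, we need $\delta F$ and $\dd F$ with $F\in L^2$ only, which is exactly the divergence-form structure the lemma accepts.
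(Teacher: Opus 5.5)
Your proposal is correct and is essentially the paper's proof. The paper likewise reads \eqref{eq:Z}, \eqref{eq:W} and \eqref{eq:Psi} as wave equations for $z$, $\delta z$ and $\dd z$ whose remaining terms are first-order operators with smooth coefficients applied to $G\in\{F,y,w,z\}$, applies Lemma~\ref{lem:carleman-neg} to each equation, and absorbs $C(\ww{z}+\ww{y}+\ww{w})$ for $s$ large; your explicit divergence-form rewriting is harmless but unnecessary, since Lemma~\ref{lem:carleman-neg} already accepts arbitrary first-order $\mathcal D_m$ with smooth coefficients and performs that rewriting in coordinates inside its own proof.
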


\begin{proof}
Let $F=\PP z\in L^2$, $y=\delta z\in L^2$ and $w=\dd z\in L^2$. By Propositions~\ref{prop:three-forms} and~\ref{prop:hodge}, the identities \eqref{eq:Z}, \eqref{eq:W} and \eqref{eq:Psi} hold in the sense of distributions. Each term in these identities, except the second-order operators $\partial_t^2+c^2\DHo$, is of the form $\mathcal DG$ with a differential operator $\mathcal D$ of order at most one with smooth coefficients and $G\in\{F,y,w,z\}$. For instance, $(c_P^2-c_S^2)\,\dd y=\dd\bigl((c_P^2-c_S^2)y\bigr)-y\,\dd(c_P^2-c_S^2)$. Hence Lemma~\ref{lem:carleman-neg}, applied to \eqref{eq:Z} with $k=1$ and $c=c_S$, to \eqref{eq:W} with $k=0$ and $c=c_P$, and to \eqref{eq:Psi} with $k=2$ and $c=c_S$, gives
\[
s\ww{z}+s\ww{y}+s\ww{w}\le C\bigl(\ww{F}+\ww{z}+\ww{y}+\ww{w}\bigr),
\]
and \eqref{eq:HC0} follows for $s$ large.
\end{proof}

\begin{remark}\label{rem:loss}
For the scalar wave operator the Carleman estimate with an $L^2$ source controls $s^3\ww{u}+s\ww{\partial^1u}$, see Lemma~\ref{lem:carleman-scalar}. Theorem~\ref{thm:HC0} controls only $s\ww{z}$, without the gradient, and Theorem~\ref{thm:HC2} requires the spatial gradient of the source. Both losses come from the first-order coupling of $z$ with $\delta z$ and $\dd z$ in \eqref{eq:Z}--\eqref{eq:Psi}, which forces the use of the estimate in negative norms. Theorem~\ref{thm:HC0} is the Riemannian counterpart of \cite[Theorem~3.1]{IIY03} and of \cite[Theorem~2.3]{IY05}, which control the solution in $L^2$ only, and Theorem~\ref{thm:HC2} is the counterpart of \cite[Theorem~2.1]{IY05}, which requires the source in $H^1$. In two- and three-dimensional Euclidean domains, \cite[Theorem~2.2]{IY05} and \cite[Theorem~20.2]{IY05b} control $|\nabla z|^2+s^2|z|^2$ by an $L^2$ source, with the loss of one power of $s$ with respect to the scalar case, through a microlocal factorization at the boundary which has no analogue in the present setting. The pseudoconvexity hypothesis of \cite[Theorem~3.1]{IIY03}, with respect to the two operators $\varrho\partial_t^2-\mu\Delta$ and $\varrho\partial_t^2-(\lambda+2\mu)\Delta$, is the Euclidean counterpart of (G2).
\end{remark}

%=====================================================================
\section{Unique continuation with rough potentials}\label{sec:ucp}
%=====================================================================

\begin{theorem}\label{thm:ucp}
Assume \Lh{} and \Gh{}, and let $T>T_*$. Let $V\in L^\infty((0,T)\times M;\operatorname{End}(TM))$, and let
\[
z\in L^\infty(0,T;H^1_0(M;TM)),\qquad \partial_tz\in L^\infty(0,T;L^2(M;TM)),
\]
satisfy
\[
\varrho\,\partial_t^2z-\Lam z+Vz=0\quad\text{in }\mathcal D'((0,T)\times\inte M),\qquad z=0\quad\text{a.e. in }(0,T)\times\omega .
\]
Then $z=0$ in $(0,T)\times M$.
\end{theorem}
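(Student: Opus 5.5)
The plan is the standard Carleman uniqueness argument. We use the $L^2$ estimate of Theorem~\ref{thm:HC0}, since the potential term $Vz$ is only in $L^2$. Theorem~\ref{thm:HC2} would need $\nabla(Vz)$, which is not available for merely measurable $V$.

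\emph{Step 1 (weight and level sets).} Fix $\beta\in(0,\rho)$ so close to $\rho$ that $T^2\beta/4>\max_{\overline U}d-\min_{\overline U}d$; this is possible because $T>T_*$. With $\psi$ as in \eqref{eq:weight}:
\begin{itemize}
\item at $t=T/2$, $\psi\ge \min_{\overline U}d+\beta_0$ on $\overline U$;
\item near $t=0$ and $t=T$, $\psi\le\max_{\overline U}d-\beta T^2/4+\beta_0<\min_{\overline U}d+\beta_0$.
\end{itemize}
Choose levels $\eta_1<\eta_2$ strictly between these two quantities. Then $\{\psi\ge\eta_1\}\cap([0,T]\times\overline U)$ is contained in $[\varepsilon,T-\varepsilon]\times\overline U$ for some $\varepsilon>0$, and $\{\psi>\eta_2\}$ contains a slab $(T/2-\delta,T/2+\delta)\times U$.

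\emph{Step 2 (cut-off).} Take $\chi(t)$ supported in $(0,T)$ with $\chi=1$ on $[\varepsilon,T-\varepsilon]$. Take $\theta\in C^\infty_c(U)$ with $\theta=1$ on a neighbourhood of $K$; this is possible since $K\subset U$ is compact. Set $\tilde z=\chi\theta z$. Then $\tilde z\in H^1((0,T)\times M;TM)$ has compact support in $(0,T)\times U$.

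Since $z=0$ a.e.\ on $(0,T)\times\omega$, we have $z=\theta z$ a.e. By Proposition~\ref{prop:three-forms}, in $\mathcal D'$,
\[
\PP\tilde z=-\varrho^{-1}\chi\theta Vz+\chi''\theta z+2\chi'\theta\partial_tz .
\]
The $\theta$-commutator $\varrho^{-1}[\Lam,\theta]z$ vanishes, because $\nabla\theta$ is supported where $\theta\ne1$, which lies in $\omega$, and there $z\equiv0$. Hence $\PP\tilde z\in L^2$. Moreover
\[
|\PP\tilde z|\le C|\tilde z|+C\,\mathbf 1_{\{\varepsilon'<t<\varepsilon\}\cup\{T-\varepsilon<t<T-\varepsilon'\}}(|z|+|\partial_tz|),
\]
and on the second set $\psi<\eta_1$.

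\emph{Step 3 (absorption and conclusion).} Apply \eqref{eq:HC0}:
\[
s\ww{\tilde z}\le C\ww{\tilde z}+Ce^{2se^{\gamma_0\eta_1}}\|z\|_{H^1\times L^2\text{-energy}}^2 .
\]
Absorb $C\ww{\tilde z}$ for $s$ large. Bound the left side from below by its integral over $\{\psi>\eta_2\}$, which gives
\[
s\,e^{2se^{\gamma_0\eta_2}}\int_{\{\psi>\eta_2\}}|z|^2\le Ce^{2se^{\gamma_0\eta_1}} .
\]
Letting $s\to\infty$ gives $z=0$ on $(T/2-\delta,T/2+\delta)\times M$, since $z=0$ on $\omega$ as well.

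Then $z$ solves the Dirichlet problem with zero Cauchy data at $t=T/2$, with a bounded potential. The energy estimate, via Gronwall in the weak formulation with $Vz$ treated as an $L^2$ source (well-posedness in $\cH$ of the linear problem), yields $z\equiv0$ on $(0,T)$.

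\emph{Main obstacle.} The delicate point is Step 2, namely justifying that $\PP\tilde z$ has the stated form in $L^2$ given only the low regularity $z\in L^\infty H^1_0$, $\partial_tz\in L^\infty L^2$. The term $\chi'\partial_t z$ lies in $L^2$, and $\partial_t^2 z$ is determined distributionally by the equation, so this works. One also needs Theorem~\ref{thm:HC0} to apply: it requires $\tilde z\in H^1$ and $\PP\tilde z\in L^2$, both of which hold.
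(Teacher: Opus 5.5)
Your proposal is correct and follows the paper's proof essentially step by step. It uses the same choice of $\beta\in(0,\rho)$ from $T>T_*$, the same space-time cut-off whose spatial commutator vanishes because $z=0$ on $\omega$, Theorem~\ref{thm:HC0} with absorption of the bounded potential, and an energy/Gronwall argument from the vanishing Cauchy data at $T/2$. The only step you leave as a sketch is that last one, which the paper makes rigorous beforehand (its Step~0): it identifies $z$ with the unique Lions--Magenes weak solution, so that $z\in C([0,T];H^1_0)\cap C^1([0,T];L^2)$ and the energy equality is available before Gronwall is applied.
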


\begin{proof}
\emph{Step 0 (regularity and energy identity).} We first identify $z$ with a solution in the sense of Lions and Magenes \cite[Chapter~3, Section~8]{LM72}, with $\mathbb V=H^1_0(M;TM)$ and $H=L^2_\varrho$ in the roles of the spaces called $V$ and $H$ there, and the form $a$ of \eqref{eq:form-a}, which is symmetric and coercive by Lemma~\ref{lem:korn}. Testing the equation against $\theta(t)\varphi(x)$ with $\theta\in C^\infty_c(0,T)$ and $\varphi\in C^\infty_c(\inte M;TM)$, and using that $\Lam$ is formally self-adjoint, so that $\langle-\Lam z(t),\varphi\rangle=a(z(t),\varphi)$ by Lemma~\ref{lem:green}, we see that the equation in $\mathcal D'((0,T)\times\inte M)$ is equivalent to
\[
\frac{\dd^2}{\dd t^2}(\varrho z,\varphi)+a(z,\varphi)+(Vz,\varphi)=0\quad\text{in }\mathcal D'(0,T),\qquad\varphi\in H^1_0,
\]
where the passage from $\varphi\in C^\infty_c(\inte M;TM)$ to $\varphi\in H^1_0$ uses the density of the former space in the latter and the boundedness of $a$. This is the weak formulation of \cite[Chapter~3, Section~8]{LM72} for the equation $\partial_t^2z+Az=-\varrho^{-1}Vz$ in $H$, with $\varrho^{-1}Vz\in L^2(0,T;H)$. Since $z\in L^2(0,T;\mathbb V)$ and $\partial_tz\in L^2(0,T;H)$, the identity above shows that $\partial_t^2z\in L^2(0,T;\mathbb V')$, so that $z\in C([0,T];H)$, $\partial_tz\in C([0,T];\mathbb V')$, and $z(0)\in H$, $\partial_tz(0)\in \mathbb V'$ are well defined, as in \cite[Chapter~3, Remark~8.2]{LM72}. By \cite[Chapter~3, Lemma~8.1]{LM72}, applied with $X=\mathbb V$, $Y=H$ to $z$ and with $X=H$, $Y=\mathbb V'$ to $\partial_tz$, the functions $z$ and $\partial_tz$ are weakly continuous with values in $\mathbb V$ and $H$ respectively, so that $z(0)\in\mathbb V$ and $\partial_tz(0)\in H$. Thus $z$ belongs to the class \textup{(8.7)--(8.8)} of \cite[Chapter~3, Theorem~8.1]{LM72}, in which that theorem asserts uniqueness, and $z$ is therefore the solution given by that theorem with the data $(z(0),\partial_tz(0))\in\mathbb V\times H$. Hence $z\in C([0,T];H^1_0)\cap C^1([0,T];L^2)$ by \cite[Chapter~3, Theorem~8.2]{LM72}, and the energy equality of \cite[Chapter~3, Lemma~8.3]{LM72} gives, for $t,t'\in[0,T]$,
\begin{equation}\label{eq:energy-V}
E(t)=E(t')-\int_{t'}^t(Vz,\partial_tz)\,\dd\tau,\qquad E=\frac12\bigl(\|\partial_tz\|_\varrho^2+a(z,z)\bigr).
\end{equation}

\emph{Step 1 (choice of the parameters).} The parameters are chosen in the following order. The time $T>T_*$ is given; we choose $\beta$ below, then $\gamma_0$ and the weight $\phi$ are those of Section~\ref{sec:geometry} for this pair $(T,\beta)$, then $\tau_0$, $\delta_1$ and $\varepsilon_0$ are chosen, and finally $s$ is sent to infinity. Let $m_0=\min_{\overline U}d$ and $m_1=\max_{\overline U}d$. Since $T>T_*$, by \eqref{eq:Tstar} we may choose $\beta\in(0,\rho)$ such that $\beta T^2/4>m_1-m_0$, and we set
\[
\tau_0=\frac13\Bigl(\frac{\beta T^2}{4}-(m_1-m_0)\Bigr)>0 .
\]
Then we choose $\delta_1\in(0,T/2)$ and $\varepsilon_0\in(0,T/2-\delta_1)$ such that
\[
\beta\Bigl(\frac T2-\delta_1\Bigr)^2\ge\frac{\beta T^2}{4}-\tau_0,\qquad \beta\varepsilon_0^2\le\tau_0 .
\]
Then, for $x\in\overline U$,
\[
\psi(t,x)\le m_0-2\tau_0+\beta_0\quad\text{if } t\in[0,\delta_1]\cup[T-\delta_1,T],\qquad \psi(t,x)\ge m_0-\tau_0+\beta_0\quad\text{if } |t-T/2|\le\varepsilon_0 .
\]
We set $\phi_-=e^{\gamma_0(m_0-2\tau_0+\beta_0)}$, $\phi_+=e^{\gamma_0(m_0-\tau_0+\beta_0)}$, and $J_0=[T/2-\varepsilon_0,T/2+\varepsilon_0]$.

\emph{Step 2 (cut-off).} Let $\eta\in C^\infty_c((0,T))$ with $0\le\eta\le1$ and $\eta=1$ on $[\delta_1,T-\delta_1]$, and let $\chi\in C^\infty_c(U)$ with $0\le\chi\le1$ and $\chi=1$ on an open neighbourhood $\cN$ of $K$. Then $\nabla\chi$ vanishes on $\cN$ and outside $U$, so that $\supp\nabla\chi\subset U\setminus\cN\subset M\setminus K=\omega$. The vector field $v=\eta\chi z$ belongs to $H^1((0,T)\times M;TM)$, because $z\in L^\infty(0,T;H^1_0)$ and $\partial_tz\in L^\infty(0,T;L^2)$, it has compact support in $(0,T)\times U$, and
\[
\PP v=\eta\chi\,\PP z+\chi\bigl(\eta''z+2\eta'\partial_tz\bigr)-\eta\varrho^{-1}[\Lam,\chi]z .
\]
The commutator $[\Lam,\chi]z$ is a first-order expression in $z$ with smooth coefficients supported in $\supp\nabla\chi\subset\omega$. Since $z=0$ a.e. in $(0,T)\times\omega$, also $\nabla z=0$ a.e. there, because the gradient of a Sobolev function vanishes almost everywhere on its zero set, and hence $[\Lam,\chi]z=0$. Using $\PP z=-\varrho^{-1}Vz$, we obtain
\[
\PP v=-\varrho^{-1}Vv+\chi\bigl(\eta''z+2\eta'\partial_tz\bigr)\in L^2 .
\]

\emph{Step 3 (Carleman estimate).} By Theorem~\ref{thm:HC0},
\[
s\ww{v}\le 2C\|\varrho^{-1}V\|_{L^\infty}^2\ww{v}+2C\int_0^T\!\!\int_Me^{2s\phi}\chi^2|\eta''z+2\eta'\partial_tz|^2\,\dd v\,\dd t .
\]
For $s\ge\max\{s_0,4C\|\varrho^{-1}V\|_{L^\infty}^2\}$ the first term is absorbed. Since $\eta'$ and $\eta''$ vanish outside $[0,\delta_1]\cup[T-\delta_1,T]$, where $\phi\le\phi_-$, and since $v=z$ on $J_0\times K$, where $\phi\ge\phi_+$, we arrive at
\[
\int_{J_0}\!\int_K|z|^2\,\dd v\,\dd t\le Cs^{-1}e^{-2s(\phi_+-\phi_-)}\int_0^T\bigl(\|z\|^2+\|\partial_tz\|^2\bigr)\dd t .
\]
Letting $s\to\infty$ we conclude that $z=0$ a.e. in $J_0\times K$. Since $z=0$ a.e. in $J_0\times\omega$ and $M=K\cup\omega$, we obtain $z(t)=0$ for $t\in J_0$, and then $\partial_tz(t)=0$ for $t\in J_0$.

\emph{Step 4 (conclusion).} We use \eqref{eq:energy-V} with $t'=T/2$ and the inequality
\[
\|z\|\le C\|z\|_\varrho\le C\lambda_1^{-1/2}a(z,z)^{1/2}.
\]
Then
\[
E(t)\le\Bigl|\int_{T/2}^t\|V\|_{L^\infty}\|z\|\|\partial_tz\|\,\dd\tau\Bigr|\le C\|V\|_{L^\infty}\Bigl|\int_{T/2}^tE(\tau)\,\dd\tau\Bigr| .
\]
The Gronwall lemma gives $E\equiv0$ on $[0,T]$, hence $z=0$.
\end{proof}

\begin{corollary}\label{cor:eigen}
Assume \Lh{} and \Gh{}. If $\varphi\in D(A)$ satisfies $A\varphi=\Lambda\varphi$ for some $\Lambda\in\R$ and $\varphi=0$ a.e. in $\omega$, then $\varphi=0$.
\end{corollary}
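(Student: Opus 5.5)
The plan is to reduce the statement to Theorem~\ref{thm:ucp} with a constant potential. Fix $T>T_*$ and set $z(t,x)=\varphi(x)$, independent of time. Then $z\in L^\infty(0,T;H^1_0(M;TM))$, because $\varphi\in D(A)\subset H^1_0$. Its time derivative $\partial_tz=0$ lies trivially in $L^\infty(0,T;L^2)$. Moreover $z=0$ a.e. in $(0,T)\times\omega$, since $\varphi=0$ a.e. in $\omega$.

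Next we compute the equation satisfied by $z$. Since $A\varphi=-\varrho^{-1}\Lam\varphi=\Lambda\varphi$, we have $-\Lam\varphi=\Lambda\varrho\varphi$. As $\partial_t^2z=0$, this gives
\[
\varrho\,\partial_t^2z-\Lam z+Vz=0,\qquad V=-\Lambda\varrho\,\mathrm{Id}.
\]
This potential lies in $L^\infty((0,T)\times M;\operatorname{End}(TM))$ because $\varrho$ is smooth on the compact manifold $M$. The identity holds pointwise a.e., since $\varphi\in H^2$, and hence in $\mathcal D'((0,T)\times\inte M)$.

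Theorem~\ref{thm:ucp} now applies and gives $z=0$, that is, $\varphi=0$. No step presents a real obstacle. The only points to check are the membership of $z$ in the required function classes and the boundedness of $V$, and both follow directly from $\varphi\in D(A)$ and the smoothness of $\varrho$. The sign of $\Lambda$ plays no role, because Theorem~\ref{thm:ucp} allows arbitrary bounded potentials.
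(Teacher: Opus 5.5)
Your proof is correct and, like the paper's, reduces the claim to Theorem~\ref{thm:ucp}. The only difference is in the choice of $z$ and $V$. The paper first uses the positivity of $A$ to get $\Lambda>0$ and then applies the theorem with $V=0$ to the time-harmonic solution $z=\cos(\sqrt{\Lambda}\,t)\varphi$. You take the stationary field $z=\varphi$ and absorb $-\Lambda\varrho$ into the bounded potential, which avoids the sign discussion but uses the nonzero-potential case of the theorem (the paper's choice needs unique continuation only for the free Lam\'e equation).
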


\begin{proof}
We may assume that $\varphi\ne0$, so that $\Lambda>0$ because $A$ is positive. Apply Theorem~\ref{thm:ucp} with $T>T_*$, $V=0$ and $z(t,x)=\cos(\sqrt\Lambda\,t)\varphi(x)$.
\end{proof}

\begin{remark}\label{rem:ucp}
In Theorem~\ref{thm:ucp} the potential is only bounded and measurable in $(t,x)$, and the coefficients $\varrho$, $\lambda$ and $\mu$ are arbitrary smooth functions satisfying \Lh{}. The roughness of the potential is not a feature of the present method but of every Carleman-based uniqueness theorem; in the Euclidean case, the proof of \cite[Theorem~3.1]{IIY03} already handles zero-order terms with bounded measurable coefficients, which are absorbed by the Carleman estimate exactly as here. The comparison with the sharp uniqueness theorems of \cite{Tat95,RZ98,Hor97,LL19} is one of hypotheses rather than of strength. Those theorems do not require any convexity but need coefficients analytic in time, which is the reason why the analyticity of the nonlinearity is assumed in \cite{JL13}, whereas Theorem~\ref{thm:ucp} requires no analyticity but the convexity hypothesis \Gh{}.
\end{remark}

%=====================================================================
\section{Internal observability}\label{sec:obs}
%=====================================================================

In this section we consider the linear undamped problem
\begin{equation}\label{eq:linear}
\varrho\,\partial_t^2u-\Lam u=0\ \ \text{in }(0,T)\times M,\qquad u=0\ \ \text{on }(0,T)\times\partial M,
\end{equation}
whose solutions with data in $\cH$ belong to $C([0,T];H^1_0)\cap C^1([0,T];L^2)$ and conserve the energy $E_u(t)=\frac12(\|\partial_tu\|_\varrho^2+a(u,u))$. For inhomogeneous and anisotropic elastic waves in Euclidean domains, with $\omega$ a neighbourhood of the whole boundary as in (G1), internal observability inequalities were obtained in \cite{MNY21} by multipliers of Morawetz type, under a one-sided radial condition on the coefficients and with no convexity requirement. The result below concerns the Riemannian setting under \Gh{}, and its proof passes through the second energy level, because the Carleman estimate of Theorem~\ref{thm:HC2} requires the source in $H^1$.

\begin{lemma}\label{lem:local-energy}
Assume \Lh{}. Let $\theta\in C^\infty(M)$ with $0\le\theta\le1$ and $\supp\theta\subset\omega$, and let $\vartheta\in C^\infty_c((0,T))$ with $0\le\vartheta\le1$. There exists $C>0$ such that every solution of \eqref{eq:linear} with data in $\cH$ satisfies
\[
\int_0^T\!\vartheta\,\|\theta\nabla u\|^2\,\dd t\le C\int_0^T\!\!\int_\omega\bigl(|\partial_tu|^2+|u|^2\bigr)\dd v\,\dd t .
\]
\end{lemma}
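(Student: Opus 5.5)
This is an equipartition-type multiplier argument: test \eqref{eq:linear} against $\vartheta\theta^2u$ and integrate over $(0,T)\times M$. To justify the computation, first take data in $D(A)\times H^1_0$, so that $u\in C([0,T];D(A))\cap C^1([0,T];H^1_0)\cap C^2([0,T];L^2)$. The inequality then extends to data in $\cH$ by density, since both sides are continuous with respect to the $\cH$-norm of the data.

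\emph{Step 1 (multiplier identity).} Since $\theta^2u\in H^1_0$, Lemma~\ref{lem:green} gives $(-\Lam u,\theta^2u)=a(u,\theta^2u)$. Integrating by parts in time, where no boundary terms appear because $\vartheta\in C^\infty_c((0,T))$, we obtain
\[
\int_0^T\vartheta\,a(u,\theta^2u)\,\dd t=\int_0^T\vartheta\,\|\theta\partial_tu\|_\varrho^2\,\dd t+\int_0^T\vartheta'\,(\varrho\partial_tu,\theta^2u)\,\dd t .
\]
The right-hand side is bounded by $C\int_0^T\!\int_\omega(|\partial_tu|^2+|u|^2)$, since $\supp\theta\subset\omega$ and $\vartheta,\vartheta'$ are bounded.

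\emph{Step 2 (localized coercivity).} Expand $\nabla(\theta^2u)=\theta^2\nabla u+2\theta\,\dd\theta\otimes u$. This gives
\[
a(u,\theta^2u)=a_{\mathrm{loc}}(\theta u)+R,\qquad a_{\mathrm{loc}}(\theta u)=\int_M\bigl(\lambda(\dv(\theta u))^2+2\mu|\Def(\theta u)|^2\bigr)\dd v,
\]
where the remainder satisfies $|R|\le C\int|\theta||\nabla u||u|+C\int|u|^2\mathbf 1_{\supp\theta}$. By \eqref{eq:coercive} applied to $\theta u\in H^1_0$, we have $a_{\mathrm{loc}}(\theta u)\ge (c_0/C_K)\|\nabla(\theta u)\|^2$. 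Moreover $\|\nabla(\theta u)\|^2\ge\frac12\|\theta\nabla u\|^2-C\|u\|^2_{L^2(\omega)}$.

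\emph{Step 3 (absorption).} Apply Young's inequality to the remainder: $|R|\le\varepsilon\|\theta\nabla u\|^2+C_\varepsilon\|u\|^2_{L^2(\omega)}$. Choose $\varepsilon$ small relative to $c_0/(2C_K)$, multiply by $\vartheta$ and integrate in time. The terms $\varepsilon\int_0^T\vartheta\|\theta\nabla u\|^2$ are then absorbed on the left, which gives the claim.

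The main obstacle is that Step 2 cannot be done crudely. The pointwise bound \eqref{eq:pointwise} only controls $\Def$, not the full gradient, so one has to pass through Korn's inequality (Lemma~\ref{lem:korn}(ii)) applied to the compactly supported field $\theta u$. Only then do the cut-off errors involve $u$ alone, without derivatives, on $\omega$, and these are exactly the terms the right-hand side allows. If $\lambda<0$, one must keep the combination $\lambda(\dv)^2+2\mu|\Def|^2$ intact for $\theta u$ before estimating, rather than splitting the two terms.
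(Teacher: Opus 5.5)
Your proof is correct and follows the paper's argument: you use the same multiplier $\vartheta\theta^2u$ with Lemma~\ref{lem:green} and integration by parts in time, then Korn's inequality on $\theta u$ followed by Young's inequality. The only difference is bookkeeping: the paper writes $a(u,\theta^2u)=\int_M\theta^2\bigl(\lambda(\dv u)^2+2\mu|\Def u|^2\bigr)\dd v+\mathcal R$ and applies \eqref{eq:pointwise} before Korn, whereas you compare with $a(\theta u,\theta u)$ and invoke \eqref{eq:coercive} directly. In your version the first-order cross terms cancel exactly, so $R$ is purely zeroth order in $u$ and the absorption in Step~3 is not even needed.
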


\begin{proof}
Both sides are continuous on $\cH$, so we may assume that $u$ is a strong solution. Since $\theta^2u(t)\in H^1_0$, Lemma~\ref{lem:green} and an integration by parts in $t$ give
\[
\int_0^T\vartheta\,a(u,\theta^2u)\,\dd t=\int_0^T\vartheta\|\theta\partial_tu\|_\varrho^2\,\dd t+\int_0^T\vartheta'(\varrho\,\partial_tu,\theta^2u)\,\dd t .
\]
Writing $\mathcal E(u)=\lambda(\dv u)^2+2\mu|\Def u|^2$, we have
\[
a(u,\theta^2u)=\int_M\theta^2\mathcal E(u)\,\dd v+\mathcal R,\qquad |\mathcal R|\le C\int_M\theta|\nabla\theta||\nabla u||u|\,\dd v,
\]
and $\mathcal E(u)\ge c_0|\Def u|^2$ by \eqref{eq:pointwise}. Applying Lemma~\ref{lem:korn}(ii) to $\theta u\in H^1_0$,
\[
\|\theta\nabla u\|^2\le2\|\nabla(\theta u)\|^2+2\||\nabla\theta|\,u\|^2\le4C_K\|\theta\Def u\|^2+C\|u\|^2_{L^2(\supp\theta)} .
\]
Collecting these relations and using the Young inequality
\[
C\theta|\nabla\theta||\nabla u||u|\le\varepsilon\theta^2|\nabla u|^2+C_\varepsilon|\nabla\theta|^2|u|^2
\]
with $\varepsilon$ small, we obtain the claim, because $\supp\theta\subset\omega$.
\end{proof}

\begin{theorem}\label{thm:obs}
Assume \Lh{} and \Gh{}, and let $T>T_*$. There exists $C_T>0$, which depends on $T$ and is not estimated, such that every solution of \eqref{eq:linear} with data in $\cH$ satisfies
\begin{equation}\label{eq:obs}
E_u(0)\le C_T\int_0^T\!\!\int_\omega|\partial_tu|^2\,\dd v\,\dd t .
\end{equation}
\end{theorem}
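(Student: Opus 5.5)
The plan is to lift the problem to the second energy level, prove a weak observability inequality with a compact remainder there, and remove the remainder by compactness--uniqueness based on Corollary~\ref{cor:eigen}. By density it suffices to treat data in $D(A)\times H^1_0$; then $v=\partial_tu$ solves \eqref{eq:linear} with data in $\cH$, and $E_v(0)$ is equivalent to $\|u(0)\|_{H^2}^2+\|\partial_tu(0)\|_{H^1}^2$ by Lemma~\ref{lem:operatorA}. Once we have
\begin{equation*}
E_u(0)\le C\int_0^T\!\!\int_\omega|\partial_tu|^2\,\dd v\,\dd t
\end{equation*}
for all such $u$, the inequality \eqref{eq:obs} follows for data in $\cH$ by density, because both sides are continuous on $\cH$.

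\emph{Step 1 (localized estimate).} Let $\eta(t)$ and $\chi(x)$ be the cut-offs of the proof of Theorem~\ref{thm:ucp}, with $\supp\nabla\chi\subset\omega$. Choose $\beta,\tau_0,\delta_1,\varepsilon_0$ as in Step~1 of that proof, and set $z=\eta\chi u$. By density, Theorem~\ref{thm:HC2} applies to $z$. Since $\PP u=0$, the source $\PP z=\chi(\eta''u+2\eta'\partial_tu)-\eta\varrho^{-1}[\Lam,\chi]u$ consists of terms supported in $t\in[0,\delta_1]\cup[T-\delta_1,T]$, where $\phi\le\phi_-$, and of first-order terms in $u$ supported in $\omega$. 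On $J_0\times K$ we have $z=u$ and $\phi\ge\phi_+$. Hence \eqref{eq:HC2}, together with energy conservation for $u$ and for $\partial_tu$, gives, for $s$ large,
\begin{equation*}
\int_{J_0}\bigl(\|u\|^2_{H^1(K)}+\|\partial_tu\|^2_{L^2(K)}\bigr)\dd t\le Ce^{-2s(\phi_+-\phi_-)}\bigl(E_u(0)+E_{\partial_tu}(0)\bigr)+C_s\int_0^T\|u\|^2_{H^2(\omega')}\,\dd t,
\end{equation*}
where $\omega'\Subset\omega$ contains $\supp\nabla\chi$. To close this into an estimate at the first energy level, I would control the $\omega'$ term by the observation. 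Lemma~\ref{lem:local-energy} bounds $\nabla u$ on $\omega'$ by $\partial_tu$ and $u$ on $\omega$, and applying the same lemma to $\partial_tu$ handles one more derivative. For second spatial derivatives on $\omega'$, I would use the interior elliptic estimate of Lemma~\ref{lem:interior-elliptic} applied to $\theta u$, with $\Lam u=\varrho\partial_t^2u$. This yields a bound by norms of $\partial_tu,\partial_t^2u,u$ on $\omega$. The extra time derivatives are paid by working with $v=\partial_tu$ and using that $\partial_tv=\partial_t^2u=-Au$.

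\emph{Step 2 (energy on the whole slab).} Adding the trivially observed region $\omega$ to $K$ in Step~1 and using energy conservation, the energy of $u$ on $J_0$ controls $E_u(0)$. Taking $s$ large and fixed so that the small $E_{\partial_tu}$ term is handled consistently, this gives
\begin{equation*}
E_u(0)\le C\int_0^T\!\!\int_\omega|\partial_tu|^2\,\dd v\,\dd t+C\int_0^T\|u\|^2_{L^2}\,\dd t .
\end{equation*}
The second term is a compact lower-order term relative to $E_u(0)$.

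\emph{Step 3 (compactness--uniqueness).} Suppose the inequality fails. Then there is a sequence with $E_{u_k}(0)=1$ and observation tending to $0$. By Step~2, $\|u_k\|_{L^2((0,T)\times M)}$ stays bounded below, and by compactness $u_k$ converges strongly in $L^2$ to a nonzero solution $u$ with $\partial_tu=0$ on $(0,T)\times\omega$. The space $N_T$ of such solutions is finite dimensional by Step~2 and is invariant under $A$, since differentiating in $t$ preserves it. Hence $A$ has an eigenvector in $N_T$. Because $A$ is invertible, this eigenvector $\varphi$ satisfies $A\varphi=\Lambda\varphi$ and vanishes on $\omega$. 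Corollary~\ref{cor:eigen} then forces $\varphi=0$, a contradiction.

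\emph{Main obstacle.} The hardest step will be Step~1 and, within it, the passage from the $H^1$-source Carleman estimate back to the first energy level. The commutator term requires $u$ in $H^2$ near $\supp\nabla\chi$. The bookkeeping that converts the second-level quantities, namely local $H^2$ norms on $\omega$ and $E_{\partial_tu}$, into $\partial_tu$ on $\omega$ plus compact terms is delicate. The approach I would try first is to prove the whole argument for $v=\partial_tu$, obtaining $E_v(0)\le C\bigl(\int\!\!\int_\omega|\partial_tv|^2+|\nabla\partial_tu|^2+\text{l.o.t.}\bigr)$. I would then descend one level via $u=-A^{-1}\partial_t^2u$, whose operator bounds hold by Lemma~\ref{lem:operatorA}.
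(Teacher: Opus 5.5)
Your overall architecture (Theorem~\ref{thm:HC2} at the second energy level, a weak observability inequality with a compact remainder, then compactness--uniqueness through Corollary~\ref{cor:eigen}) is the paper's. However, Steps~1--2 as written do not close, and this is the crux of the proof.

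\textbf{The gap in Steps~1--2.} The inequality you display in Step~1 keeps only the first-order part $s^4\ww{z}+s^2\ww{\partial^1z}$ of the left-hand side of \eqref{eq:HC2}, so it controls a first-level quantity. Its right-hand side contains $e^{-2s(\phi_+-\phi_-)}E_{\partial_tu}(0)$, and this term cannot be absorbed for any fixed $s$. Indeed $E_{\partial_tu}\ge\lambda_1E_u$ and not conversely: for an eigenmode with eigenvalue $\Lambda$ one has $E_{\partial_tu}=\Lambda E_u$. Moreover, treating the commutator term through Lemma~\ref{lem:interior-elliptic} and $\Lam u=\varrho\,\partial_t^2u$ puts $\partial_t^2u$ on $\omega$ into the right-hand side, which is a second-level observation. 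So the first-level inequality of Step~2, with only $\partial_tu$ on $\omega$ and $\int_0^T\|u\|^2$ on the right, does not follow from what you wrote. The phrase ``handled consistently'' hides exactly the missing step.

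\textbf{How to close it.} The fix is the one you half-state in your last paragraph, but it must be built into the estimate itself.
\begin{enumerate}[label=(\alph*)]
\item Keep $\ww{\partial^2z}$ on the left of \eqref{eq:HC2}. This gives $e^{2s\phi_+}\int_{J_0}\int_{\cN_0}(|\partial_t^2u|^2+|\nabla\partial_tu|^2)$ on a neighbourhood $\cN_0$ of $K$.
\item Complete this on $M\setminus\cN_0$ by Lemma~\ref{lem:local-energy} applied to the solution $\partial_tu$.
\item Absorb the time cut-off contribution using $e^{-2s(\phi_+-\phi_-)}(E_u+E_{\partial_tu})\le e^{-2s(\phi_+-\phi_-)}(1+\lambda_1^{-1})E_{\partial_tu}$, with $s$ fixed large. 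The result is $E_{\partial_tu}\le C\int_0^T\int_\omega(|\partial_t^2u|^2+|\partial_tu|^2+|u|^2)$.
\item To descend one level, take an arbitrary smooth solution $\tilde u$ and apply this estimate to the solution $u$ with $u(0)=-A^{-1}\partial_t\tilde u(0)$ and $\partial_tu(0)=\tilde u(0)$. Then $\partial_tu=\tilde u$ and $u=-A^{-1}\partial_t\tilde u$.
\item The observation becomes $\partial_t\tilde u$ on $\omega$, and the remainder becomes $\int_0^T(\|\tilde u\|^2+\|A^{-1}\partial_t\tilde u\|^2)$, which is compact with respect to data in $\cH$.
\end{enumerate}

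\textbf{Smaller points in Step~3.} Elements of your space $N_T$ have data only in $\cH$, so the claim that differentiating in $t$ preserves $N_T$ needs a regularity argument. One option is difference quotients combined with finite dimensionality of the invisible space on a slightly shorter interval $(0,\tau)$ with $\tau>T_*$. Another is the paper's device of choosing $\tau_1<\tau_2$ with $\mathscr V_{\tau_1}=\mathscr V_{\tau_2}$, so that the space is invariant under the group. Also, the eigenvector should be taken for the generator $(u,v)\mapsto(v,-Au)$ on the complexification. What then vanishes on $\omega$ is $\varkappa\varphi$ with $\varkappa\ne0$, which gives $\varphi=0$ on $\omega$. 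For an element of $N_T$ only $\partial_tu$ vanishes on $\omega$, so $u(0)$ itself need not vanish there.
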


\begin{proof}
\emph{Step 1 (an estimate at the second energy level).} Let $u$ be a solution of \eqref{eq:linear} with data in $\bigcap_kD(A^{k/2})$. By Lemma~\ref{lem:operatorA} and the regularity statement recalled in its proof, applied inductively, $D(A^{k/2})\subset H^k$ for every $k$, and the inclusion is continuous by the closed graph theorem. For even $k=2m$ this follows by descending induction from $A^{m-1}u\in D(A)$ and $-\Lam A^{m-1}u=\varrho A^mu\in L^2$, because $-\Lam A^{j-1}u=\varrho A^ju\in H^{2(m-j)}$ gives $A^{j-1}u\in H^{2(m-j)+2}$ for $j=m,\dots,1$, and for odd $k=2m+1$ in the same way from $A^mu\in D(A^{1/2})=H^1_0$ and $-\Lam A^{m-1}u=\varrho A^mu\in H^1$. Hence $u\in C^j([0,T];H^k)$ for all $j$ and $k$, so that $u\in C^\infty([0,T]\times M)$ by the Sobolev embedding, and $\partial_tu$ is also a solution of \eqref{eq:linear}. We denote by $E_1=E_{\partial_tu}$ its energy, which is constant in time. Since $\partial_t^2u=-Au$, the spectral theorem gives
\begin{equation}\label{eq:E1-E}
E_1=\frac12\bigl(\|Au\|_\varrho^2+a(\partial_tu,\partial_tu)\bigr)\ge\lambda_1E_u .
\end{equation}
Let $\beta,\tau_0,\delta_1,\varepsilon_0,\phi_\pm,J_0$ be as in Step~1 of the proof of Theorem~\ref{thm:ucp}. Let $\cN_0\subset U$ be an open neighbourhood of $K$ and $\chi\in C^\infty_c(U)$ with $\chi=1$ on $\cN_0$, so that $\omega_1=\supp\nabla\chi$ is a compact subset of $\omega\cap\inte M$. Let $\eta$ be as in the proof of Theorem~\ref{thm:ucp}. The field $v=\eta\chi u$ is smooth with compact support in $(0,T)\times U$, and
\[
\PP v=\chi\bigl(\eta''u+2\eta'\partial_tu\bigr)-\eta\varrho^{-1}[\Lam,\chi]u .
\]
We estimate the right-hand side of \eqref{eq:HC2}. The commutator term is supported in $\supp\eta\times\omega_1$, where
\[
|[\Lam,\chi]u|+|\nabla[\Lam,\chi]u|\le C\bigl(|\nabla^2u|+|\nabla u|+|u|\bigr).
\]
The terms with $\eta'$ and $\eta''$ are supported in $([0,\delta_1]\cup[T-\delta_1,T])\times\overline U$, where $\phi\le\phi_-$, and are bounded with their gradients by $C(|u|+|\nabla u|+|\partial_tu|+|\nabla\partial_tu|)$. Using Lemma~\ref{lem:korn} and the conservation of $E_u$ and $E_1$, we obtain, with $\phi_{\max}=\max_{[0,T]\times\overline U}\phi$,
\[
s\ww{\PP v}+\ww{\nabla\PP v}\le Cse^{2s\phi_{\max}}\int_{\supp\eta}\int_{\omega_1}\bigl(|\nabla^2u|^2+|\nabla u|^2+|u|^2\bigr)+CsTe^{2s\phi_-}\bigl(E_u+E_1\bigr).
\]
Let $\theta_1,\theta_2\in C^\infty_c(\omega\cap\inte M)$ with $0\le\theta_i\le1$, $\theta_1=1$ near $\omega_1$ and $\theta_2=1$ near $\supp\theta_1$, and let $\vartheta\in C^\infty_c((0,T))$ with $\vartheta=1$ on $\supp\eta$. By Lemma~\ref{lem:interior-elliptic} applied to $\theta_1u(t)$, and since $\Lam u=\varrho\,\partial_t^2u$,
\[
\|\nabla^2u\|_{L^2(\omega_1)}\le C\bigl(\|\partial_t^2u\|_{L^2(\omega)}+\|\theta_2\nabla u\|+\|u\|_{L^2(\omega)}\bigr).
\]
By Lemma~\ref{lem:local-energy} with $\theta_2$ and $\vartheta$, the integral over $\supp\eta\times\omega_1$ above is bounded by $C\cO(u)$, where
\[
\cO(u)=\int_0^T\!\!\int_\omega\bigl(|\partial_t^2u|^2+|\partial_tu|^2+|u|^2\bigr)\dd v\,\dd t .
\]
On the other hand, $v=u$ on $J_0\times\cN_0$, where $\phi\ge\phi_+$, so that the left-hand side of \eqref{eq:HC2} is bounded from below by
\[
e^{2s\phi_+}\int_{J_0}\int_{\cN_0}\bigl(|\partial_t^2u|^2+|\nabla\partial_tu|^2\bigr)\dd v\,\dd t .
\]
Let $\theta\in C^\infty(M)$ with $0\le\theta\le1$, $\theta=1$ on $M\setminus\cN_0$ and $\supp\theta\subset\omega$, which exists because $M\setminus\cN_0$ is a compact subset of $\omega$. Lemma~\ref{lem:local-energy} applied to the solution $\partial_tu$, with $\theta$ and with $\vartheta_0\in C^\infty_c((0,T))$ equal to $1$ on $J_0$, gives
\[
\int_{J_0}\!\int_{M\setminus\cN_0}\bigl(|\partial_t^2u|^2+|\nabla\partial_tu|^2\bigr)\dd v\,\dd t\le C\cO(u).
\]
We add the two lower bounds and use the inequality
\[
\|\partial_t^2u\|_\varrho^2+a(\partial_tu,\partial_tu)\le C\bigl(\|\partial_t^2u\|^2+\|\nabla\partial_tu\|^2\bigr).
\]
We obtain
\[
4\varepsilon_0E_1=\int_{J_0}\bigl(\|\partial_t^2u\|_\varrho^2+a(\partial_tu,\partial_tu)\bigr)\dd t\le C\bigl(1+se^{2s(\phi_{\max}-\phi_+)}\bigr)\cO(u)+Cse^{-2s(\phi_+-\phi_-)}\bigl(E_u+E_1\bigr).
\]
By \eqref{eq:E1-E}, $E_u+E_1\le(1+\lambda_1^{-1})E_1$, and fixing $s$ large we conclude that
\begin{equation}\label{eq:obs-E1}
E_1\le C\cO(u).
\end{equation}

\emph{Step 2 (a weak observability inequality).} Let $\tilde u$ be a solution of \eqref{eq:linear} with data in $\bigcap_kD(A^{k/2})$, and let $u$ be the solution with $u(0)=-A^{-1}\partial_t\tilde u(0)$ and $\partial_tu(0)=\tilde u(0)$. Then $\partial_tu$ and $\tilde u$ solve \eqref{eq:linear} with the same data, so that $\partial_tu=\tilde u$, and $u=-A^{-1}\partial_t^2u=-A^{-1}\partial_t\tilde u$. Since $E_1=E_{\tilde u}$, \eqref{eq:obs-E1} gives
\begin{equation}\label{eq:weak-obs}
E_{\tilde u}(0)\le C\int_0^T\!\!\int_\omega|\partial_t\tilde u|^2\,\dd v\,\dd t+C\int_0^T\bigl(\|\tilde u\|^2+\|A^{-1}\partial_t\tilde u\|^2\bigr)\dd t .
\end{equation}
All the terms in \eqref{eq:weak-obs} are continuous with respect to the data in $\cH$, and therefore \eqref{eq:weak-obs} holds for all solutions with data in $\cH$. The argument applies to every $\tau\in(T_*,T]$ in place of $T$, with a constant depending on $\tau$.

\emph{Step 3 (compactness).} We denote by $H^{-1}$ the dual of $H^1_0(M;TM)$ with respect to the pivot space $L^2_\varrho$, and we still write $A$ for the extension of $A$ to an isomorphism of $H^1_0$ onto $H^{-1}$ defined by $\langle Au,v\rangle=a(u,v)$, so that $A^{-1}$ is bounded from $H^{-1}$ into $H^1_0$ and coincides with the inverse of $A$ on $L^2_\varrho$. Let $\mathcal K_\tau(\tilde u_0,\tilde u_1)=(\tilde u,A^{-1}\partial_t\tilde u)$, where $\tilde u$ is the solution with data $(\tilde u_0,\tilde u_1)\in\cH$ on $[0,\tau]$. If $(\tilde u_0,\tilde u_1)$ ranges over a bounded subset of $\cH$, then $\tilde u$ is bounded in $C([0,\tau];H^1_0)$ and $\partial_t\tilde u$ is bounded in $C([0,\tau];L^2)$, while $\partial_t^2\tilde u=-A\tilde u$ is bounded in $C([0,\tau];H^{-1})$. By the Aubin--Dubinskii--Lions theorem \cite[Theorem~A.3.7]{Ch15}, $\tilde u$ and $\partial_t\tilde u$ are relatively compact in $C([0,\tau];L^2)$ and $C([0,\tau];H^{-1})$, respectively. Hence $\mathcal K_\tau$ is a compact linear operator from $\cH$ into $L^2(0,\tau;L^2)^2$.

\emph{Step 4 (the space of invisible solutions).} For $\tau\in(T_*,T]$, let $\mathscr V_\tau$ be the closed subspace of $\cH$ formed by the data whose solutions satisfy $\partial_t\tilde u=0$ a.e. in $(0,\tau)\times\omega$. By \eqref{eq:weak-obs}, $E_{\tilde u}(0)\le C\|\mathcal K_\tau(\tilde u_0,\tilde u_1)\|^2$ on $\mathscr V_\tau$, so that the unit ball of $\mathscr V_\tau$ is compact and $\dim\mathscr V_\tau<\infty$. Moreover $\mathscr V_\tau\supset\mathscr V_{\tau'}$ for $\tau<\tau'$. Fix $T'\in(T_*,T)$. The function $\tau\mapsto\dim\mathscr V_\tau$ is nonincreasing, integer valued and bounded by $\dim\mathscr V_{T'}$ on $[T',T]$, so there exist $\tau_1<\tau_2$ in $[T',T]$ such that $\mathscr V_{\tau_1}=\mathscr V_{\tau_2}$. We denote this space by $\mathscr V$. If $(\tilde u_0,\tilde u_1)\in\mathscr V$ and $0<\varepsilon<\tau_2-\tau_1$, the data at time $\varepsilon$ belong to $\mathscr V_{\tau_2-\varepsilon}\subset\mathscr V_{\tau_1}=\mathscr V$. Let $(S(t))_{t\in\R}$ be the unitary group generated by $\cA(u,v)=(v,-Au)$ on $\cH$, with $D(\cA)=D(A)\times H^1_0$, whose orbits are the solutions of \eqref{eq:linear}. We have shown that $S(\varepsilon)\mathscr V\subset\mathscr V$ for $0<\varepsilon<\tau_2-\tau_1$, hence $S(t)\mathscr V\subset\mathscr V$ for all $t\ge0$ by the group property, and $S(t)\mathscr V=\mathscr V$ for all $t\ge0$ because $S(t)$ is injective and $\mathscr V$ is finite dimensional, so that $\mathscr V$ is invariant under $S(t)$ for all $t\in\R$. The restriction of $(S(t))$ to $\mathscr V$ is a strongly continuous group on a finite-dimensional space, hence generated by a bounded operator, which is the restriction of $\cA$. Thus $\mathscr V\subset D(\cA)$ and $\cA\mathscr V\subset\mathscr V$. If $\mathscr V\ne\{0\}$, the complexification $\mathscr V_\C=\mathscr V+i\mathscr V$ is invariant under $\cA$, and $\cA|_{\mathscr V_\C}$ has an eigenvector $(\varphi,\varkappa\varphi)\neq0$ with eigenvalue $\varkappa\in\C$; then $A\varphi=-\varkappa^2\varphi$ and $\varphi\ne0$. Since $A$ is positive, $\varkappa^2<0$, so that $\varkappa\ne0$, and the solution $e^{\varkappa t}\varphi$ of the complexified problem, whose data belong to $\mathscr V_\C$, satisfies $\varkappa e^{\varkappa t}\varphi=0$ in $\omega$. Hence $\varphi=0$ a.e. in $\omega$, the real and imaginary parts of $\varphi$ are eigenfunctions of $A$ vanishing in $\omega$, and Corollary~\ref{cor:eigen} gives $\varphi=0$, a contradiction. Therefore $\mathscr V_T\subset\mathscr V=\{0\}$.

\emph{Step 5 (conclusion).} If \eqref{eq:obs} fails, there are solutions $\tilde u^k$ with $E_{\tilde u^k}(0)=1$ and $\int_0^T\int_\omega|\partial_t\tilde u^k|^2\to0$. Passing to a subsequence, the data converge weakly in $\cH$ to some $(\tilde u_0,\tilde u_1)$, and by Step~3, $\mathcal K_T$ of the data converges strongly. The corresponding solution $\tilde u$ satisfies $\partial_t\tilde u=0$ a.e. in $(0,T)\times\omega$ by the weak lower semicontinuity of the norm, so that $(\tilde u_0,\tilde u_1)\in\mathscr V_T=\{0\}$. On the other hand, \eqref{eq:weak-obs} applied to $\tilde u^k$ gives
\[
1\le C\int_0^T\!\!\int_\omega|\partial_t\tilde u^k|^2\,\dd v\,\dd t+C\|\mathcal K_T(\tilde u^k_0,\tilde u^k_1)\|^2,
\]
and in the limit $\|\mathcal K_T(\tilde u_0,\tilde u_1)\|^2\ge C^{-1}>0$, which is impossible.
\end{proof}

%=====================================================================
\section{Long-time dynamics}\label{sec:dynamics}
%=====================================================================

\begin{theorem}\label{thm:main}
Assume \Lh{}, (H1)--(H4), so that in particular $n=3$, and \Gh{}. Then the dynamical system $(\cH,S_t)$ generated by \eqref{eq:main} has a compact global attractor $\mathfrak A$ with the following properties.
\begin{enumerate}[label=(\roman*)]
\item $\mathfrak A=\mathbb M^u(\cN)$, the unstable manifold emanating from the set $\cN$ of equilibria in the sense of \cite[Definition~2.3.10]{Ch15}, namely the set of points through which there passes a complete trajectory converging to $\cN$ as $t\to-\infty$.
\item $\mathfrak A$ has finite fractal dimension in $\cH$.
\item $\mathfrak A$ is a bounded subset of $(H^2\cap H^1_0)\times H^1_0$. More precisely, there exists $R>0$ such that every full trajectory $\{(u(t),\partial_tu(t)):t\in\R\}\subset\mathfrak A$ satisfies $\partial_t^2u\in L^\infty(\R;L^2)$ and
\[
\|u(t)\|_{H^2}+\|\partial_tu(t)\|_{H^1}+\|\partial_t^2u\|_{L^\infty(\R;L^2)}\le R,\qquad t\in\R .
\]
\item $(\cH,S_t)$ possesses a generalized fractal exponential attractor whose dimension is finite in $L^2(M;TM)\times H^{-1}(M;TM)$.
\item For every $y\in\cH$, the $\omega$-limit set of $y$ is contained in $\cN$. In particular, if $\cN$ is finite, every trajectory converges to an equilibrium.
\end{enumerate}
\end{theorem}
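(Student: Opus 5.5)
The plan is to follow Chueshov's scheme for gradient systems with a possibly non-strict Lyapunov function, in the form of \cite[Theorem~2.4.21]{Ch15} together with the quasi-stability theory of \cite[Chapter~3]{Ch15}. The energy $\Phi$ of \eqref{eq:Phi} is a Lyapunov function by \eqref{eq:energy-identity}. By Lemma~\ref{lem:Phi-equilibria} its sublevel sets are bounded and forward invariant, $\Phi$ is bounded from below on bounded sets, and $\cN$ is bounded. It then remains to prove two facts: that $(\cH,S_t)$ is asymptotically smooth, which we obtain from quasi-stability, and that every complete trajectory along which $\Phi$ is constant consists of equilibria.

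\emph{Step 1 (quasi-stability).} Let $y^1,y^2$ lie in a bounded forward invariant set $\cB$, and let $z=u^1-u^2$. Then $z$ solves $\varrho\partial_t^2z-\Lam z+a\partial_tz=-(\hat f(u^1)-\hat f(u^2))$. Apply Theorem~\ref{thm:obs} to the linear part, written via Duhamel with $a\partial_tz$ and the nonlinear difference as sources, or run the standard energy multiplier argument that reduces to \eqref{eq:obs}. Combined with the dissipation $\int_0^T(a\partial_tz,\partial_tz)$, this should give
\[
E_z(T)\le q\,E_z(0)+C_{\cB}\sup_{[0,T]}\|z\|_{L^r}^2,\qquad q<1 .
\]
Here Lemma~\ref{lem:nemytskii} with $r<6$ makes the nonlinear term a compact seminorm, since $H^1_0\subset L^r$ compactly. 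Iterating on intervals $[kT,(k+1)T]$ gives the quasi-stability inequality of \cite[Definition~3.4.1]{Ch15}. From it follow asymptotic smoothness, finite fractal dimension of any compact invariant set (item (ii)), and the generalized exponential attractor with finite dimension in the weaker space $L^2\times H^{-1}$ (item (iv)), the latter using the H\"older continuity in time of trajectories in that space.

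\emph{Step 2 (regularity of complete trajectories).} This is the part we cannot take from the literature. The quasi-stability estimate on a bounded forward invariant set yields, by the argument of \cite[Theorem~3.4.19]{Ch15} applied to time differences $u(t+h)-u(t)$ of a complete bounded trajectory, that $\partial_tu\in L^\infty(\R;H^1_0)$ and $\partial_t^2u\in L^\infty(\R;L^2)$. We must check that this argument only uses boundedness of the full trajectory and not the existence of an attractor. Elliptic regularity in Lemma~\ref{lem:operatorA} then gives $u\in L^\infty(\R;H^2)$ from $-\Lam u=h-\hat f(u)-\varrho\partial_t^2u-a\partial_tu\in L^2$. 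This is item (iii) once the attractor exists.

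\emph{Step 3 (rigidity).} Let $\Phi$ be constant along a complete bounded trajectory. Then $\int(a\partial_tu,\partial_tu)=0$, so $\partial_tu=0$ in $\omega$. The field $z=\partial_tu$ satisfies $\varrho\partial_t^2z-\Lam z+Vz=0$ with $V=\partial_uf(x,u)$. By Step 2 and $H^2\subset C(M)$ (this is where $n=3$ enters), $V$ is bounded, and $z$ has the regularity required by Theorem~\ref{thm:ucp}. Hence $z\equiv0$ and the trajectory is an equilibrium. Theorem~2.4.21 of \cite{Ch15} then yields the attractor with $\mathfrak A=\mathbb M^u(\cN)$, which is item (i), and the $\omega$-limit sets contained in $\cN$, which is item (v).

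I expect the main obstacle to be Step 1. There, observability for the undamped linear problem must be converted into a stabilization inequality in the presence of the localized damping and of the semilinear difference, without losing the compactness of the remainder. My first attempt would be the standard perturbation route: split $z$ into a solution of the free Lam\'e system with the same data plus a Duhamel correction, and bound the correction by $\|a\partial_tz\|_{L^2(0,T;L^2)}$ and the $L^r$ lower-order norm.
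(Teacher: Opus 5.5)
Your proposal is correct and follows the paper's proof essentially step by step: Lemma~\ref{lem:stab} via observability of the free system plus a Duhamel correction and iteration on $[kT,(k+1)T]$, then Proposition~\ref{prop:reg} via the time-difference quasi-stability argument on a bounded forward invariant set, then rigidity via Theorem~\ref{thm:ucp}, and finally \cite[Theorem~2.4.21]{Ch15} and \cite[Theorem~3.4.18]{Ch15}; the only real variation is that Lemma~\ref{lem:rigidity} applies Theorem~\ref{thm:ucp} to $u(\cdot+\eta)-u$ with the averaged potential $\int_0^1\partial_uf\bigl(x,u+\sigma(u(\cdot+\eta)-u)\bigr)\,\dd\sigma$ rather than to $\partial_tu$, which avoids justifying the chain rule for $\partial_t\hat f(u)$, though your version is also legitimate given the $L^\infty$ and Lipschitz-in-$H^1_0$ bounds of your Step~2. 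As minor remarks, the criterion of \cite[Theorem~2.4.21]{Ch15} uses that $\Phi$ is bounded from \emph{above} on bounded sets (which Lemma~\ref{lem:Phi-equilibria} provides), and the paper obtains (v) separately, by noting that $\Phi$ is constant on the compact invariant set $\omega(y)$ and applying the rigidity lemma to the complete trajectories inside it.
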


We do not prove that $\Phi$ is a strict Lyapunov function on $\cH$, that is, that $(\cH,S_t)$ is a gradient system in the sense of \cite[Definition~2.4.1]{Ch15}; the characterization (i) is obtained instead from the rigidity of the complete trajectories with constant energy, Lemma~\ref{lem:rigidity} below. The proof rests on the next three results. The first one is a stabilizability estimate in the sense of Chueshov and Lasiecka \cite{CL08,CL10,Ch15}.

\begin{lemma}\label{lem:stab}
Assume \Lh{}, (H1)--(H4) and \Gh{}. Let $B\subset\cH$ be a bounded forward invariant set. There exist $C_B,\kappa_B>0$ such that, for $y^1,y^2\in B$ and $z=u^1-u^2$, where $(u^i,\partial_tu^i)=S_ty^i$,
\begin{equation}\label{eq:stab}
\|S_ty^1-S_ty^2\|_{\cH}^2\le C_Be^{-\kappa_Bt}\|y^1-y^2\|^2_{\cH}+C_B\sup_{0\le\tau\le t}\|z(\tau)\|_{L^r}^2,\qquad t\ge0,
\end{equation}
with $r$ given by \eqref{eq:r}. Moreover $\|S_ty^1-S_ty^2\|_{\cH}\le e^{C_Bt}\|y^1-y^2\|_{\cH}$.
\end{lemma}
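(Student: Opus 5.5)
The plan is the standard observability-plus-compactness route to a stabilizability estimate. The difference $z=u^1-u^2$ solves the linear damped problem with a source,
\[
\varrho\,\partial_t^2z-\Lam z+a\,\partial_tz=-G,\qquad G=\hat f(u^1)-\hat f(u^2),\qquad z=0\ \text{on }\partial M .
\]
Since $B$ is bounded and forward invariant, $\|S_ty^i\|_{\cH}\le R_B$ for all $t\ge0$. Hence Lemma~\ref{lem:nemytskii} gives
\[
\|G(t)\|\le C_B\|z(t)\|_{L^r}.
\]
The second assertion follows from the energy identity for $z$ and Gronwall. Indeed,
\[
\tfrac{\dd}{\dd t}E_z+(a\partial_tz,\partial_tz)=-(G,\partial_tz)\le C_B\|z\|_{L^r}\|\partial_tz\|\le C_BE_z,
\]
because $H^1_0\subset L^6\subset L^r$.

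For \eqref{eq:stab}, fix $T>T_*$ and prove a one-period estimate on $[0,T]$.

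\emph{Step 1.} Split $z=z_1+z_2$ on $[0,T]$. Here $z_1$ solves the undamped linear problem \eqref{eq:linear} with data $z(0)$, and $z_2$ solves
\[
\varrho\,\partial_t^2z_2-\Lam z_2=-a\partial_tz-G,\qquad z_2(0)=0 .
\]
Energy estimates for $z_2$ give
\[
\sup_{[0,T]}E_{z_2}\le C_T\Bigl(\int_0^T\!(a\partial_tz,\partial_tz)\,\dd t+\int_0^T\|G\|^2\,\dd t\Bigr),
\]
using $a\in L^\infty$ so that $\|a\partial_tz\|^2\le\|a\|_\infty(a\partial_tz,\partial_tz)$. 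Theorem~\ref{thm:obs} applied to $z_1$, together with $a\ge a_0$ on $\omega$, then yields
\[
E_z(0)\le C_T\Bigl(\int_0^T(a\partial_tz,\partial_tz)\,\dd t+\sup_{[0,T]}\|z\|_{L^r}^2\Bigr).
\]

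\emph{Step 2.} The energy identity on $[0,T]$ gives
\[
E_z(T)+\int_0^T(a\partial_tz,\partial_tz)=E_z(0)-\int_0^T(G,\partial_tz).
\]
Bound the last term by
\[
\varepsilon\sup_{[0,T]}E_z+C_\varepsilon\sup_{[0,T]}\|z\|_{L^r}^2 .
\]
Also use
\[
\sup_{[0,T]}E_z\le C\Bigl(E_z(0)+\int_0^T(a\partial_tz,\partial_tz)+\sup_{[0,T]}\|z\|_{L^r}^2\Bigr).
\]
Combining with Step 1 then gives
\[
E_z(T)\le \gamma E_z(0)+C_B\sup_{[0,T]}\|z\|_{L^r}^2,\qquad \gamma=\frac{C_T}{1+C_T}<1 .
\]
The nonlinear term must be absorbed without producing $\sup E_z$ on the right with a large constant. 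I expect this bookkeeping to be the main obstacle. It is only possible because every contribution of $G$ is controlled by the compact seminorm $\sup\|z\|_{L^r}$, with $r<6$ from \eqref{eq:r}. If the Young split is awkward, I would instead keep $\int_0^T\|G\|\|\partial_tz\|$ and bound it by $C_B\sup\|z\|_{L^r}\,\sup E_z^{1/2}$ before applying Young.

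\emph{Step 3.} Iterate on $[kT,(k+1)T]$, using forward invariance of $B$ so that constants do not change. This gives
\[
E_z(kT)\le\gamma^kE_z(0)+\frac{C_B}{1-\gamma}\sup_{[0,kT]}\|z\|_{L^r}^2 .
\]
Intermediate times are handled by the Gronwall bound from the second assertion. Finally, $E_z$ is equivalent to $\|S_ty^1-S_ty^2\|_{\cH}^2$, which yields \eqref{eq:stab} with $\kappa_B=-T^{-1}\log\gamma$.
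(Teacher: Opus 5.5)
Your proposal is correct and follows the paper's proof essentially step by step. The shared steps are the splitting of $z$ into the undamped solution with the data of $z$ plus a remainder with source $-a\partial_tz-G$ and zero data, observability (Theorem~\ref{thm:obs}) combined with $a\ge a_0$ on $\omega$, the energy identity on $[0,T]$, and iteration over $[kT,(k+1)T]$ using forward invariance. The differences are minor: the paper absorbs $\int_0^T|(G,\partial_tz)|\,\dd t$ by a Young split against $\int_0^T\|\partial_tz\|^2\,\dd t$, which it controls through the Lipschitz bound $\|\partial_tz(t)\|^2\le Ce^{2C_Bt}E_z(0)$ rather than through $\sup_{[0,T]}E_z$, and it justifies the energy identity for mild solutions by approximating them with classical solutions, a step you assume without comment.
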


\begin{proof}
Let $E_z=\frac12(\|\partial_tz\|_\varrho^2+a(z,z))$, so that $\|S_ty^1-S_ty^2\|_{\cH}^2=2E_z(t)$, and let $G=-(\hat f(u^1)-\hat f(u^2))$. By Lemma~\ref{lem:nemytskii} and the boundedness of $B$,
\[
\|G(t)\|\le C_B\|z(t)\|_{L^r}\le C_BE_z(t)^{1/2}.
\]
The pair $Z=(z,\partial_tz)$ is the mild solution of $\partial_tZ=\cA_0Z+(0,\varrho^{-1}G)$, and $G\in C([0,\infty);L^2)$ because $\hat f$ is continuous from $H^1_0$ into $L^2$ by Lemma~\ref{lem:nemytskii}. Let $T_1>0$. We approximate $Z(0)$ in $\cH$ by elements of $D(\cA_0)$ and $G$ in $L^1(0,T_1;L^2)$ by functions of class $C^1$. By \cite[Corollary~4.2.5]{Pazy83}, the corresponding mild solutions are classical solutions, for which
\[
\frac{\dd}{\dd t}\,\frac12\|Z\|_{\cH}^2=(\cA_0Z,Z)_{\cH}+\bigl((0,\varrho^{-1}G),Z\bigr)_{\cH}=-(a\partial_tz,\partial_tz)+(G,\partial_tz),
\]
and they converge to $Z$ in $C([0,T_1];\cH)$ by the variation of constants formula, because the semigroup $e^{t\cA_0}$ generated by $\cA_0$ is a contraction semigroup. Since $\frac12\|Z\|_{\cH}^2=E_z$ and $T_1$ is arbitrary, we obtain
\begin{equation}\label{eq:energy-z}
E_z(t)+\int_{t'}^t(a\partial_tz,\partial_tz)\,\dd\tau=E_z(t')+\int_{t'}^t(G,\partial_tz)\,\dd\tau,\qquad 0\le t'\le t.
\end{equation}
The Lipschitz estimate follows from \eqref{eq:energy-z} and the Gronwall lemma.

Fix $T>T_*$ and write $D=\int_0^T(a\partial_tz,\partial_tz)\,\dd t$. Let $\tilde z$ be the solution of \eqref{eq:linear} with the same data as $z$ at $t=0$, and $\tilde r=z-\tilde z$. Then $\varrho\,\partial_t^2\tilde r-\Lam\tilde r=G-a\partial_tz$ with zero data, and the argument above, applied with $a=0$ and with the source $G-a\partial_tz$, gives
\[
\sup_{[0,T]}E_{\tilde r}\le C\Bigl(\int_0^T\|G-a\partial_tz\|\,\dd t\Bigr)^2\le CT\int_0^T\bigl(\|G\|^2+\|a\|_{L^\infty}(a\partial_tz,\partial_tz)\bigr)\dd t .
\]
By Theorem~\ref{thm:obs} and (H1),
\[
E_z(0)=E_{\tilde z}(0)\le\frac{C_T}{a_0}\int_0^T(a\partial_t\tilde z,\partial_t\tilde z)\,\dd t\le\frac{2C_T}{a_0}\Bigl(D+C\|a\|_{L^\infty}T\sup_{[0,T]}E_{\tilde r}\Bigr)\le C_1\Bigl(D+\int_0^T\|G\|^2\dd t\Bigr),
\]
where we may assume $C_1>1$. By \eqref{eq:energy-z},
\[
D=E_z(0)-E_z(T)+\int_0^T(G,\partial_tz)\,\dd t,
\]
and therefore
\[
E_z(T)\le\Bigl(1-\frac1{C_1}\Bigr)E_z(0)+\int_0^T|(G,\partial_tz)|\,\dd t+\int_0^T\|G\|^2\dd t .
\]
Since
\[
\int_0^T|(G,\partial_tz)|\,\dd t\le\varepsilon\int_0^T\|\partial_tz\|^2\dd t+\frac1{4\varepsilon}\int_0^T\|G\|^2\dd t,\qquad \|\partial_tz(t)\|^2\le Ce^{2C_Bt}E_z(0),
\]
a suitable choice of $\varepsilon$ yields
\[
E_z(T)\le qE_z(0)+C_2\sup_{[0,T]}\|z\|_{L^r}^2,\qquad q=1-\frac1{2C_1}\in(0,1).
\]
Since $B$ is forward invariant, we may apply this estimate on the intervals $[kT,(k+1)T]$. By induction,
\[
E_z(kT)\le q^kE_z(0)+\frac{C_2}{1-q}\sup_{[0,kT]}\|z\|^2_{L^r},
\]
and the Lipschitz estimate on $[kT,(k+1)T]$ gives \eqref{eq:stab} with $\kappa_B=|\ln q|/T$.
\end{proof}

The second result provides regularity of complete trajectories confined in a bounded forward invariant set, before the existence of an attractor is known. For trajectories lying in the attractor it is \cite[Theorem~3.4.19]{Ch15}; the version below, for bounded forward invariant sets, is what the proof of Theorem~\ref{thm:main} requires, and we include its short proof.

\begin{proposition}\label{prop:reg}
Assume \Lh{}, (H1)--(H4) and \Gh{}. Let $B\subset\cH$ be a bounded forward invariant set. There exists $R_B>0$ such that every complete trajectory $\{(u(t),\partial_tu(t)):t\in\R\}\subset B$ satisfies $u(t)\in D(A)$ and $\partial_tu(t)\in H^1_0$ for every $t\in\R$, $\partial_t^2u\in L^\infty(\R;L^2)$, and
\begin{equation}\label{eq:reg}
\|u(t)\|_{H^2}+\|\partial_tu(t)\|_{H^1}+\|\partial_t^2u\|_{L^\infty(\R;L^2)}+\sup_{x\in M}|u(t,x)|\le R_B,\qquad t\in\R .
\end{equation}
Moreover, $\varrho\,\partial_t^2u-\Lam u+a\partial_tu+\hat f(u)=h$ in $L^2(M;TM)$ for a.e. $t\in\R$.
\end{proposition}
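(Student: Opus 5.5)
The plan is to follow the classical argument of Chueshov and Lasiecka for regularity of trajectories in quasi-stable sets (\cite[Theorem~3.4.19]{Ch15}). The only change is that the stabilizability estimate of Lemma~\ref{lem:stab} is used on $B$ instead of on the attractor. This is legitimate because $B$ is forward invariant and every time translate of a complete trajectory in $B$ is again a complete trajectory in $B$.

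Fix a complete trajectory $U(t)=(u(t),\partial_tu(t))\subset B$ and $h_0\in(0,1]$. Apply \eqref{eq:stab} to the pair $y^1=U(s+h_0)$, $y^2=U(s)$ on the interval $[s,s+t]$. Writing $z_{h_0}(t)=u(t+h_0)-u(t)$, this gives
\[
\|U(t+h_0)-U(t)\|_{\cH}^2\le C_Be^{-\kappa_B(t-s)}\|U(s+h_0)-U(s)\|_{\cH}^2+C_B\sup_{s\le\tau\le t}\|z_{h_0}(\tau)\|_{L^r}^2 .
\]
The first term is at most $C_Be^{-\kappa_B(t-s)}\operatorname{diam}(B)^2$, so letting $s\to-\infty$ yields
\[
\sup_{t\in\R}\|U(t+h_0)-U(t)\|_{\cH}\le C\sup_{t\in\R}\|z_{h_0}(t)\|_{L^r}.
\]
Since $r<6$, we interpolate $L^r$ between $L^2$ and $L^6\supset H^1_0$: for every $\varepsilon>0$, $\|z\|_{L^r}\le\varepsilon\|z\|_{H^1_0}+C_\varepsilon\|z\|$. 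The $H^1_0$ part is absorbed on the left. For the $L^2$ part, $\|z_{h_0}(t)\|\le h_0\sup\|\partial_tu\|\le Ch_0$ because $B$ is bounded. Hence
\[
\sup_t\|U(t+h_0)-U(t)\|_{\cH}\le C_Bh_0,
\]
with a constant uniform over all complete trajectories in $B$.

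Dividing by $h_0$ and letting $h_0\to0$, weak compactness in $\cH$ gives that $\partial_tU$ exists as an element of $L^\infty(\R;\cH)$, bounded by $C_B$. Thus $\partial_tu\in L^\infty(\R;H^1_0)$ and $\partial_t^2u\in L^\infty(\R;L^2)$; the bound is first obtained as a Lipschitz bound and then upgraded to pointwise-in-time values by weak continuity. The integrated form of the equation identifies $\varrho\partial_t^2u$ as $\Lam u-a\partial_tu-\hat f(u)+h$ in $H^{-1}$. All terms except $\Lam u$ lie in $L^2$, uniformly in $t$: $\hat f(u)$ is in $L^2$ by Lemma~\ref{lem:nemytskii}, and $a\partial_tu\in L^2$. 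Hence $-\Lam u(t)=F(t)\in L^2$ with $u(t)\in H^1_0$, and the elliptic regularity recalled in the proof of Lemma~\ref{lem:operatorA} gives $u(t)\in D(A)$ with $\|u(t)\|_{H^2}\le C(\|F(t)\|+\|u(t)\|)$. Because the bound on $\partial_t^2u$ is only a.e. in $t$, I would get it at every $t$ by weak closedness of $A$ together with the continuity of $u$ in $H^1_0$. The embedding $H^2\subset C(M)$, valid since $n=3$, then yields the $\sup_x|u|$ bound, and the equation holds in $L^2$ for a.e. $t$.

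The main obstacle is making the limit $s\to-\infty$ and the difference-quotient passage rigorous. In particular, the bound on $\partial_tU$ must hold at every $t$ and not only a.e. For this I would use that the difference quotients are uniformly bounded in $\cH$ and that $U$ is continuous in $\cH$, so every weak limit point of $(U(t+h)-U(t))/h$ is controlled and the bound passes to every $t$. A secondary point is checking that the constant in the interpolation absorption depends only on $B$.
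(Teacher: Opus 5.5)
Your proposal is correct and follows the paper's proof in its essential structure: you apply \eqref{eq:stab} to time-shifted points of the trajectory and send the initial time to $-\infty$, absorb the $L^r$ term by an Ehrling-type inequality to get $\sup_t\|U(t+\eta)-U(t)\|_{\cH}\le C\eta$, use difference quotients to obtain $\partial_tu(t)\in H^1_0$ and $\partial_t^2u\in L^\infty(\R;L^2)$, and reach every $t$ by weak closedness. The only variation is in the last step, where the paper never uses the $H^{-1}$ form of the equation: it applies the weak closedness of the graph of $\cA_0$ to the averages $\eta^{-1}\int_{t_0}^{t_0+\eta}U\,\dd\tau$ through the integrated mild formula \eqref{eq:mild-int}, which it proves by Hille's theorem, and so gets $U(t_0)\in D(\cA_0)$ directly for every $t_0$; you instead take the weak formulation of mild solutions as known and pass from a.e.\ $t$ to every $t$ by elliptic regularity and the closedness of $A$.
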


\begin{proof}
Let $U(t)=(u(t),\partial_tu(t))$ and $\Theta_B=\sup_{y\in B}\|y\|_{\cH}$. Throughout the proof, $C$ denotes constants depending only on $B$.

\emph{Step 1 (difference quotients).} Fix $\eta\in(0,1]$ and let $z_\eta=u(\cdot+\eta)-u$. For $\tau\le t$, the points $U(\tau+\eta)$ and $U(\tau)$ belong to $B$, and \eqref{eq:stab} applied with initial time $\tau$ gives
\[
\|U(t+\eta)-U(t)\|^2_{\cH}\le C_Be^{-\kappa_B(t-\tau)}(2\Theta_B)^2+C_B\sup_{\tau\le s\le t}\|z_\eta(s)\|_{L^r}^2 .
\]
Letting $\tau\to-\infty$, we obtain
\[
\|U(t+\eta)-U(t)\|^2_{\cH}\le C_B\sup_{s\le t}\|z_\eta(s)\|^2_{L^r},\qquad t\in\R .
\]

\emph{Step 2 (Ehrling's inequality).} Since the embedding $H^1_0\subset L^r$ is compact and $L^r\subset L^2$ is continuous and injective, for every $\varepsilon>0$ there exists $C_\varepsilon$ with $\|v\|_{L^r}\le\varepsilon a(v,v)^{1/2}+C_\varepsilon\|v\|$ for $v\in H^1_0$, see \cite[(3.4.20) in the proof of Proposition~3.4.17]{Ch15}. Let
\[
Z_\eta=\sup_{t\in\R}\|U(t+\eta)-U(t)\|^2_{\cH}\le(2\Theta_B)^2 .
\]
By Step~1,
\[
Z_\eta\le2C_B\varepsilon^2Z_\eta+2C_BC_\varepsilon^2\sup_{s\in\R}\|z_\eta(s)\|^2 .
\]
We choose $\varepsilon$ with $2C_B\varepsilon^2=\frac12$. Since
\[
\|z_\eta(s)\|\le\int_s^{s+\eta}\|\partial_tu\|\,\dd\tau\le C\eta \Theta_B,
\]
we find $Z_\eta\le C\eta^2$.

\emph{Step 3 (time regularity).} Fix $t\in\R$. The quotients $\eta^{-1}z_\eta(t)$ are bounded in $H^1_0$, and they converge to $\partial_tu(t)$ in $L^2$ as $\eta\downarrow0$, because $u\in C^1(\R;L^2)$. Hence a subsequence converges weakly in $H^1_0$, its limit is $\partial_tu(t)$, and $a(\partial_tu(t),\partial_tu(t))\le C$. Moreover $\|\partial_tu(t+\eta)-\partial_tu(t)\|_\varrho\le Z_\eta^{1/2}\le C\eta$. Thus the difference quotients $D_\eta=\eta^{-1}(\partial_tu(\cdot+\eta)-\partial_tu)$ are bounded in $L^\infty(\R;L^2)$, which is the dual of $L^1(\R;L^2)$. A subsequence converges weakly-$*$ to some $\xi$ with $\|\xi\|_{L^\infty(\R;L^2)}\le C$, and for every $\varphi\in C^\infty_c(\R;L^2)$,
\[
\int_\R(D_\eta,\varphi)\,\dd t=-\int_\R\Bigl(\partial_tu,\frac{\varphi(t)-\varphi(t-\eta)}{\eta}\Bigr)\dd t\to-\int_\R(\partial_tu,\varphi')\,\dd t .
\]
Hence $\partial_t^2u=\xi\in L^\infty(\R;L^2)$ in the sense of distributions. Moreover, $Z_\eta\le C\eta^2$ for $\eta\in(0,1]$, together with the triangle inequality for larger increments, shows that $U$ is Lipschitz continuous with values in $\cH$, with a Lipschitz constant depending only on $B$.

\emph{Step 4 (space regularity and the equation).} Let $G(t)=\mathcal F(U(t))$. Since $\mathcal F$ is Lipschitz on bounded sets, $G$ is continuous and bounded by a constant depending only on $B$. Fix $t_0<t$. For $\tau\ge t_0$ the mild solution satisfies
\[
U(\tau)=e^{(\tau-t_0)\cA_0}U(t_0)+\int_{t_0}^\tau e^{(\tau-\sigma)\cA_0}G(\sigma)\,\dd\sigma ,
\]
and Fubini's theorem yields
\[
\int_{t_0}^tU(\tau)\,\dd\tau=\int_0^{t-t_0}e^{\tau\cA_0}U(t_0)\,\dd\tau+\int_{t_0}^t\Bigl(\int_0^{t-\sigma}e^{\tau\cA_0}G(\sigma)\,\dd\tau\Bigr)\dd\sigma .
\]
By \cite[Theorem~1.2.4(b)]{Pazy83}, $\int_0^\theta e^{\tau\cA_0}y\,\dd\tau\in D(\cA_0)$ and $\cA_0\int_0^\theta e^{\tau\cA_0}y\,\dd\tau=e^{\theta\cA_0}y-y$ for $y\in\cH$ and $\theta\ge0$. Hence the integrand of the outer integral in $\sigma$ is continuous with values in $D(\cA_0)$, endowed with the graph norm, and its image under $\cA_0$ is the continuous function $e^{(t-\sigma)\cA_0}G(\sigma)-G(\sigma)$. Since $\cA_0$ is closed, the theorem of Hille \cite[Appendix~C, Proposition~C.4]{EN00} gives
\begin{equation}\label{eq:mild-int}
\int_{t_0}^tU(\tau)\,\dd\tau\in D(\cA_0),\qquad \cA_0\int_{t_0}^tU(\tau)\,\dd\tau=U(t)-U(t_0)-\int_{t_0}^tG(\tau)\,\dd\tau .
\end{equation}
Let
\[
m_\eta(t_0)=\frac1\eta\int_{t_0}^{t_0+\eta}U(\tau)\,\dd\tau,\qquad \eta\in(0,1].
\]
By Step~3, $m_\eta(t_0)\to U(t_0)$ in $\cH$ as $\eta\downarrow0$, and by \eqref{eq:mild-int} and Step~3, $\|\cA_0m_\eta(t_0)\|_{\cH}\le C$. Since the graph of the closed operator $\cA_0$ is a closed subspace of $\cH\times\cH$, it is weakly closed. Therefore every weak limit point of $\cA_0m_\eta(t_0)$ as $\eta\downarrow0$ is equal to $\cA_0U(t_0)$, so that $U(t_0)\in D(\cA_0)$, $\|\cA_0U(t_0)\|_{\cH}\le C$, and the whole family $\cA_0m_\eta(t_0)$ converges weakly to $\cA_0U(t_0)$. By the definition of $\cA_0$, $u(t_0)\in D(A)$ and $\|Au(t_0)\|\le C$, and Lemma~\ref{lem:operatorA} together with the embedding $H^2(M)\subset C(M)$, valid because $n=3$, yields \eqref{eq:reg}.

Finally, by \eqref{eq:mild-int} the functions $t_0\mapsto\cA_0m_{1/k}(t_0)$, $k\in\mathbb N$, are continuous, and they converge weakly to $\cA_0U(t_0)$ for every $t_0$. Hence $t\mapsto\cA_0U(t)$ is weakly measurable and bounded, and, since $\cH$ is separable, it is strongly measurable by the Pettis theorem. Applying \cite[Appendix~C, Proposition~C.4]{EN00} once more to the left-hand side of \eqref{eq:mild-int}, we obtain
\[
U(t)-U(t_0)=\int_{t_0}^t\bigl(\cA_0U(\tau)+G(\tau)\bigr)\dd\tau,
\]
and the Lebesgue differentiation theorem for Bochner integrals shows that $\partial_tU=\cA_0U+G$ for a.e. $t$, whose second component is the stated equation.
\end{proof}

The third result is the rigidity of complete trajectories with constant energy. It replaces the strict Lyapunov property of gradient systems, which we do not prove.

\begin{lemma}\label{lem:rigidity}
Assume \Lh{}, (H1)--(H4) and \Gh{}. Let $B\subset\cH$ be a bounded forward invariant set, and let $\{(u(t),\partial_tu(t)):t\in\R\}\subset B$ be a complete trajectory such that $\Phi(u(t),\partial_tu(t))$ does not depend on $t$. Then $(u(t),\partial_tu(t))=(e,0)$ for all $t$, with $(e,0)\in\cN$.
\end{lemma}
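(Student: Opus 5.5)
By the energy identity \eqref{eq:energy-identity}, constancy of $\Phi$ along the trajectory gives $\int_s^t(a\,\partial_tu,\partial_tu)\,\dd\tau=0$ for all $s\le t$, hence by (H1) $\partial_tu=0$ a.e. in $\R\times\omega$. The plan is to differentiate the equation in time and apply the unique continuation Theorem~\ref{thm:ucp} to $z=\partial_tu$. The regularity needed for this comes from Proposition~\ref{prop:reg}.

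Since the trajectory lies in $B$, Proposition~\ref{prop:reg} gives $u\in L^\infty(\R;H^2)$, $\partial_tu\in L^\infty(\R;H^1_0)$, $\partial_t^2u\in L^\infty(\R;L^2)$ and $\sup_{t,x}|u|\le R_B$. The equation $\varrho\partial_t^2u-\Lam u+a\partial_tu+\hat f(u)=h$ holds in $L^2$ for a.e. $t$, and $a\partial_tu=0$ because $\partial_tu$ vanishes on $\omega$ and $a$ is supported in... rather, $a\partial_tu$ vanishes wherever $\partial_tu$ does. The cleaner form: $a\,\partial_tu=0$ a.e. where $a>0$ would need $\partial_tu=0$ there. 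Since $a$ may be positive outside $\omega$, I will instead use the full statement $\int(a\partial_tu,\partial_tu)=0$, which gives $a\,|\partial_tu|^2=0$ a.e., hence $a\,\partial_tu=0$ a.e. in $\R\times M$. Thus $\varrho\partial_t^2u-\Lam u+\hat f(u)=h$.

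Set $z=\partial_tu$. Differentiating in $t$ in the sense of distributions on $(0,T)\times\inte M$ gives
\[
\varrho\,\partial_t^2z-\Lam z+Vz=0,\qquad V(t,x)=\partial_uf(x,u(t,x)).
\]
To justify the chain rule, I will use difference quotients: $\eta^{-1}(\hat f(u(\cdot+\eta))-\hat f(u))$ equals $\int_0^1\partial_uf(x,u+\theta z_\eta)\,\dd\theta\,\eta^{-1}z_\eta$. This converges to $Vz$ in $L^1_{\mathrm{loc}}$, because $u$ is bounded and continuous in $(t,x)$ and $\partial_uf$ is continuous. By (H3), $V$ is bounded, with $\|V\|_{L^\infty}\le C_f(1+R_B^\gamma)$; this is exactly where the $H^2\subset C(M)$ bound of Proposition~\ref{prop:reg} is used. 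We also have $z\in L^\infty(0,T;H^1_0)$, $\partial_tz=\partial_t^2u\in L^\infty(0,T;L^2)$, and $z=0$ a.e. in $(0,T)\times\omega$. Choosing any $T>T_*$ and translating in time, Theorem~\ref{thm:ucp} yields $\partial_tu\equiv0$ on every interval, so $u(t)=e$ is constant.

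Finally, with $\partial_tu=0$ the equation gives $-\Lam e+\hat f(e)=h$ with $e\in D(A)$, so $(e,0)\in\cN$ by Lemma~\ref{lem:Phi-equilibria}. The main obstacle is the time differentiation of the nonlinear term. Both the justification of the chain rule and the $L^\infty$ bound on $V$ rest on the uniform sup bound for $u$. I would handle this with the difference-quotient argument above, passing to the limit in the weak formulation tested against $\theta(t)\varphi(x)$.
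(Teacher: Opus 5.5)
Your argument is correct. Its skeleton is the paper's: the energy identity gives $a\,\partial_tu=0$, the sup bound of Proposition~\ref{prop:reg} makes the potential bounded, and Theorem~\ref{thm:ucp} is applied on translated intervals.

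The difference is the function to which you apply unique continuation. You use the derivative $z=\partial_tu$. The paper instead uses, for each fixed $\eta\neq0$, the finite difference $z_\eta=u(\cdot+\eta)-u$. This satisfies $\varrho\,\partial_t^2z_\eta-\Lam z_\eta+V_\eta z_\eta=0$ with $V_\eta=\int_0^1\partial_uf(x,u+\sigma z_\eta)\,\dd\sigma$, and it vanishes on $\omega$ because $z_\eta(t)=\int_t^{t+\eta}\partial_tu\,\dd\tau$.

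The paper's route needs no passage to the limit. From Proposition~\ref{prop:reg} it uses essentially only the bound $|u|\le R_B$ and the equation for a.e.\ $t$, since $z_\eta\in L^\infty(H^1_0)$ and $\partial_tz_\eta\in L^\infty(L^2)$ already follow from the boundedness of $B$ in $\cH$.

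Your route needs two further ingredients:
\begin{itemize}
\item the higher regularity $\partial_tu\in L^\infty(H^1_0)$ and $\partial_t^2u\in L^\infty(L^2)$, to meet the hypotheses of Theorem~\ref{thm:ucp} for $z=\partial_tu$;
\item the limit $\eta\to0$ in the difference-quotient equation.
\end{itemize}
That limit is fine. For instance, $\eta^{-1}z_\eta\to\partial_tu$ in $C_{\mathrm{loc}}(\R;L^2)$, and $V_\eta\to V$ a.e.\ along subsequences with a uniform bound. Hence $V_\eta\eta^{-1}z_\eta\to V\partial_tu$ in $L^2_{\mathrm{loc}}$ by dominated convergence. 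The joint continuity of $u$ in $(t,x)$ that you invoke is therefore not needed, although it does hold by interpolating $C(\R;H^1_0)$ with $L^\infty(\R;H^2)$.

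Note, finally, that the equation you write for $\eta^{-1}z_\eta$ before taking the limit is already of the form required by Theorem~\ref{thm:ucp}: it has the bounded potential $V_\eta$ and vanishes on $\omega$. You could stop there and skip the chain-rule justification altogether; that is exactly the paper's proof.
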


\begin{proof}
By \eqref{eq:energy-identity}, applied on $[s,t]$ for arbitrary $s\le t$, we have $\int_s^t\int_Ma|\partial_tu|^2\,\dd v\,\dd\tau=0$. Hence $a\,\partial_tu=0$ a.e. in $\R\times M$ and, by (H1), $\partial_tu=0$ a.e. in $\R\times\omega$.

Fix $\eta\ne0$ and let $z_\eta=u(\cdot+\eta)-u$. By Proposition~\ref{prop:reg}, $z_\eta\in L^\infty(\R;H^1_0)$, $\partial_tz_\eta\in L^\infty(\R;L^2)$, $\partial_t^2z_\eta\in L^\infty(\R;L^2)$ in the sense of distributions, and, for a.e. $t$,
\[
\varrho\,\partial_t^2z_\eta-\Lam z_\eta+a\,\partial_tz_\eta+\hat f(u(\cdot+\eta))-\hat f(u)=0 .
\]
Here $a\,\partial_tz_\eta=0$, and $\hat f(u(\cdot+\eta))-\hat f(u)=V_\eta z_\eta$, where
\[
V_\eta(t,x)=\int_0^1\partial_uf\bigl(x,u(t,x)+\sigma z_\eta(t,x)\bigr)\,\dd\sigma .
\]
Since the identity above holds in $L^2$ for a.e. $t$, the equation of Theorem~\ref{thm:ucp} with $V=V_\eta$ holds in $\mathcal D'(\R\times\inte M)$. By \eqref{eq:reg}, $|u(t,x)|\le R_B$ and $|u(t+\eta,x)|\le R_B$ for all $t$ and $x$, and $u$ is measurable on $\R\times M$ because $u\in C(\R;L^2)$. Since $\partial_uf$ is continuous on $TM$, $V_\eta$ is a Carath\'eodory composition, hence measurable, and it is bounded by the maximum of $\|\partial_uf\|$ on the compact set $\{(x,\xi)\in TM:|\xi|\le R_B\}$. Moreover, $z_\eta(t)=\int_t^{t+\eta}\partial_tu\,\dd\tau$ in $L^2$, and $\partial_tu(\tau)=0$ a.e. in $\omega$ for a.e. $\tau$, so that $z_\eta=0$ a.e. in $\R\times\omega$. By Theorem~\ref{thm:ucp}, applied on every interval $(t_0,t_0+T)$ with $T>T_*$, we obtain $z_\eta=0$. Since $\eta$ is arbitrary, $u(t)=e$ does not depend on $t$, $\partial_tu=0$, and $(e,0)$ is a stationary point.
\end{proof}

\begin{proof}[Proof of Theorem~\ref{thm:main}]
\emph{Asymptotic smoothness.} We apply the framework of \cite[Section~3.4.3]{Ch15} with $X=H^1_0$, endowed with the norm $a(\cdot,\cdot)^{1/2}$, $Y=L^2_\varrho$ and trivial third component, so that the structural Assumption~3.4.14 of \cite{Ch15} holds. Let $B$ be a bounded forward invariant set. By Lemma~\ref{lem:stab}, the system is asymptotically quasi-stable on $B$ in the sense of \cite[Definition~3.4.15]{Ch15}, with the compact seminorm $\|\cdot\|_{L^r}$ on $H^1_0$ (recall that $r<6$), and with the functions $e^{2C_Bt}$, $C_Be^{-\kappa_Bt}$ and $C_B$ in the roles of the three scalar functions of that definition. By \cite[Proposition~3.4.17]{Ch15}, it is quasi-stable on $B$, and by \cite[Proposition~3.4.3]{Ch15}, $(\cH,S_t)$ is asymptotically smooth.

\emph{Existence and (i).} By Lemma~\ref{lem:Phi-equilibria}, $\Phi$ is a Lyapunov function on $\cH$ in the sense of \cite[Definition~2.4.1]{Ch15}, bounded from above on bounded sets and with bounded sublevel sets. Let $\mathfrak B$ be the set of points $y\in\cH$ through which there passes a complete trajectory along which $\Phi$ is constant. If $y\in\mathfrak B$, the trajectory lies in the bounded forward invariant set $\{\Phi\le\Phi(y)\}$, and Lemma~\ref{lem:rigidity} shows that $y\in\cN$. Conversely $\cN\subset\mathfrak B$. Thus $\mathfrak B=\cN$, which is bounded by Lemma~\ref{lem:Phi-equilibria}. By \cite[Theorem~2.4.21]{Ch15}, $(\cH,S_t)$ possesses a compact global attractor $\mathfrak A=\mathbb M^u(\mathfrak B)=\mathbb M^u(\cN)$.

\emph{(ii) and (iv).} Let $O$ be the open $1$-neighbourhood of $\mathfrak A$ in $\cH$. It is bounded, so that $O\subset\{\Phi\le R_0\}$ for some $R_0$ by Lemma~\ref{lem:Phi-equilibria}, and the closure $B_0$ of $\bigcup_{t\ge0}S_tO$ is a bounded and closed subset of the forward invariant set $\{\Phi\le R_0\}$; it is forward invariant because $S_t$ is continuous and maps $\bigcup_{\tau\ge0}S_\tau O$ into itself. It is absorbing, because $\mathfrak A$ attracts bounded sets and $O$ is a neighbourhood of $\mathfrak A$, so that every bounded set is mapped into $O\subset B_0$ after a finite time. Hence $(\cH,S_t)$ is dissipative and asymptotically quasi-stable on $B_0$, and \cite[Theorem~3.4.18]{Ch15} shows that $\mathfrak A$ has finite fractal dimension. For $y\in B_0$, \eqref{eq:mild-int} holds in the space $\widetilde\cH=L^2\times H^{-1}$, where $\cA_0$ extends to a bounded operator from $\cH$ into $\widetilde\cH$. Since $S_ty$ remains in $B_0$, we obtain $\|S_{t_1}y-S_{t_2}y\|_{\widetilde\cH}\le C|t_1-t_2|$ for $t_1,t_2\in[0,T]$, and the second part of \cite[Theorem~3.4.18]{Ch15} gives (iv).

\emph{(iii).} Every full trajectory in $\mathfrak A$ is contained in $B_0$, and Proposition~\ref{prop:reg} gives (iii) with $R=R_{B_0}$.

\emph{(v).} Let $y\in\cH$. Its positive orbit is contained in $\{\Phi\le\Phi(y)\}$, which is bounded. By \cite[Lemma~3.2.1]{Hale88}, the $\omega$-limit set $\omega(y)$ is nonempty, compact, connected and invariant, and it attracts $y$. Since $\Phi(S_ty)$ is nonincreasing and bounded from below, $\Phi$ is constant on $\omega(y)$. Through every point of the compact invariant set $\omega(y)$ there passes a complete trajectory contained in $\omega(y)$, along which $\Phi$ is constant, so that $\omega(y)\subset\mathfrak B=\cN$. If $\cN$ is finite, the connectedness of $\omega(y)$ forces it to be a single equilibrium, and $S_ty$ converges to it.
\end{proof}

\begin{remark}\label{rem:critical}
Let $\gamma=2$ in (H3), which is the critical growth in dimension three. Proposition~\ref{prop:wellposed} and Lemma~\ref{lem:Phi-equilibria} would remain valid, because $\hat f$ would still be locally Lipschitz from $H^1_0$ into $L^2$, and Lemma~\ref{lem:stab} would remain valid, with $r$ replaced by $2$, on every compact forward invariant set, by the uniform $6$-integrability of its first components. What would be missing is asymptotic smoothness on bounded sets, which for the scalar wave equation is obtained in \cite{CLT08} from a multiplier identity with compensated compactness and in \cite{CMMS26} from Strichartz estimates. For the Lam\'e system under \Gh{}, we have not been able to deduce from \Gh{} the positivity of the interior quadratic form produced by the multiplier $\nabla_{\nabla d}u$, Strichartz estimates up to a Dirichlet boundary are not available for the Lam\'e system, and the Carleman estimates of Section~\ref{sec:HC} do not control the gradient by an $L^2$ source, see Remark~\ref{rem:loss}. The critical case therefore remains open.
\end{remark}

\appendix

%=====================================================================
\section{Rays, geometric control and the obstacle condition}\label{app:rays}
%=====================================================================

In this appendix we relate \Gh{} to the geometric conditions used for the stabilization of waves. In the interior of $M$, the principal symbol of $\varrho\,\partial_t^2-\Lam$ acts on the pressure and shear polarizations as $-\varrho(\tau^2-c_P^2|\xi|^2)$ and $-\varrho(\tau^2-c_S^2|\xi|^2)$, and the projections of the corresponding null bicharacteristics are the geodesics of $g_P$ and of $g_S$, travelled with unit speed with respect to $g_P$ and $g_S$, respectively. We call them pressure rays and shear rays.

\begin{proposition}\label{prop:rays}
Assume \Gh{}, and let $\rho$ be given by Lemma~\ref{lem:U}. For $c\in\{c_S,c_P\}$ let $L_c=\rho^{-1}\max_{\overline U}|\nabla_{g_c}d|_{g_c}$. If $\varsigma:[0,L]\to M$ is a geodesic of $g_c$ with $|\varsigma'|_{g_c}=1$ and $\varsigma([0,L])\cap\omega=\emptyset$, then $L\le\min\{L_c,T_*\}$, where $T_*$ is the geometric time \eqref{eq:Tstar}. Consequently, every shear ray and every pressure ray meets $\omega$ within time $\min\{L_{c_S},T_*\}$ and $\min\{L_{c_P},T_*\}$, respectively, before it can reach $\partial M$.
\end{proposition}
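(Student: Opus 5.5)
The plan is to study the function $f(\tau)=d(\varsigma(\tau))$ along the ray. Since $\varsigma([0,L])\cap\omega=\emptyset$, the whole ray lies in $K\subset U\subset\overline{U_1}$. There the convexity bound \eqref{eq:convex} applies.

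\emph{Step 1 (second derivative of $f$).} Because $\varsigma$ is a unit-speed $g_c$-geodesic,
\[
f''(\tau)=\nabla^2_{g_c}d(\varsigma',\varsigma')\ge2\rho|\varsigma'|^2_{g_c}=2\rho,\qquad f'(\tau)=g_c(\nabla_{g_c}d,\varsigma').
\]
The bound on $f'$ follows from Cauchy--Schwarz: $|f'(\tau)|\le\max_{\overline U}|\nabla_{g_c}d|_{g_c}=\rho L_c$.

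\emph{Step 2 (bound $L\le L_c$).} Integrating $f''\ge2\rho$ on $[0,L]$ gives
\[
2\rho L\le f'(L)-f'(0)\le2\rho L_c,
\]
so $L\le L_c$.

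\emph{Step 3 (bound $L\le T_*$).} Let $\tau_0\in[0,L]$ be a minimum point of $f$. The convexity of $f$ gives $f(\tau)\ge f(\tau_0)+\rho(\tau-\tau_0)^2$; at an interior minimum this uses $f'(\tau_0)=0$, and at an endpoint minimum the one-sided derivative has the correct sign. Hence
\[
\rho\max\{\tau_0,L-\tau_0\}^2\le\max_{\overline U}d-\min_{\overline U}d.
\]
Since $\max\{\tau_0,L-\tau_0\}\ge L/2$, we get
\[
L\le2\Bigl(\frac{\max_{\overline U}d-\min_{\overline U}d}{\rho}\Bigr)^{1/2}=T_*.
\]

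\emph{Step 4 (consequence for rays).} Take any unit-speed ray starting in $K$ and defined on a longer interval. It cannot stay in $K$ beyond time $\min\{L_c,T_*\}$, so it enters $\omega$ within that time. By (G1), $\partial M\subset\omega$, so it can reach $\partial M$ only after meeting $\omega$. A ray starting in $\omega$ meets $\omega$ at time $0$.

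The only delicate point is the endpoint case in Step 3, which the one-sided argument handles. Everything else is routine.
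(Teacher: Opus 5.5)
Your proposal is correct and follows the paper's proof essentially step by step: the convexity bound $(d\circ\varsigma)''\ge2\rho$ along the $g_c$-geodesic, integration against $|(d\circ\varsigma)'|\le\rho L_c$ to get $L\le L_c$, and quadratic growth from a minimum point to get $L\le T_*$, with (G1) giving the final assertion. The only difference is cosmetic: you handle interior and endpoint minima in a single inequality using $\max\{\tau_0,L-\tau_0\}\ge L/2$, whereas the paper treats the cases separately and obtains the sharper bound $\rho L^2$ when the minimum is at an endpoint.
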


\begin{proof}
Since $\varsigma([0,L])\subset K\subset\overline U$, the function $\vartheta=d\circ\varsigma$ satisfies $\vartheta''=\nabla^2_{g_c}d(\varsigma',\varsigma')\ge2\rho$ by \eqref{eq:convex}, because $\varsigma$ is a geodesic of $g_c$. Moreover $|\vartheta'|=|\langle\nabla_{g_c}d,\varsigma'\rangle_{g_c}|\le\rho L_c$. Hence
\[
2\rho L\le\vartheta'(L)-\vartheta'(0)\le2\rho L_c .
\]
For the bound by $T_*$, let $s_0\in[0,L]$ be a minimum point of $\vartheta$. If $s_0\in(0,L)$, then $\vartheta'(s_0)=0$ and the Taylor formula with the lower bound on $\vartheta''$ gives $\vartheta(s)-\vartheta(s_0)\ge\rho(s-s_0)^2$ for $s\in[0,L]$, which at the endpoint farther from $s_0$ yields $\max\vartheta-\min\vartheta\ge\rho L^2/4$. If $s_0=0$, then $\vartheta'(0)\ge0$ and $\vartheta(L)-\vartheta(0)\ge\rho L^2$, and the case $s_0=L$ is symmetric. In all cases
\[
\rho L^2\le4\bigl(\max_{\overline U}d-\min_{\overline U}d\bigr),
\]
that is, $L\le T_*$. The last assertion follows because $\partial M\subset\omega$.
\end{proof}

The second bound shows that the geometric time $T_*$, which governs Theorems~\ref{thm:ucp} and~\ref{thm:obs}, dominates the time that a shear or a pressure ray can spend in $K$ without meeting $\omega$. Whether $T_*$ can be replaced by this escape time, as is the case for the scalar wave equation under the geometric control condition \cite{BLR92}, is not known to us.

By Proposition~\ref{prop:rays}, every ray reaching $\partial M$ has already met $\omega$, so that the reflections, the gliding rays and the conversions between pressure and shear modes at the boundary described in \cite{BLe01} play no role. In this sense, \Gh{} plays for both families of rays the role of the geometric control condition of \cite{BLR92}. The comparison is made at the level of rays only; we do not invoke, nor claim, a propagation theorem for the Lam\'e system with variable coefficients on a manifold, and the geometric control condition relevant for the elasticity system in Euclidean domains, including the boundary phenomena, is the one described in \cite{BLe01}.

Condition \Gh{} is also related to the escape function conditions of Miller \cite{Mil03} and to the obstacle condition of \cite{CMMS26}. Let $\Gamma\subset\partial M$ be relatively open and $T>0$. Following \cite[Definition~4.1]{Mil03}, the escape potential condition holds for $(\Gamma,T)$ with respect to a metric $\hat g$ if there exists $\varphi\in C^2(M)$ with $|\nabla_{\hat g}\varphi|_{\hat g}\le T/2$, $\nabla^2_{\hat g}\varphi(X,X)\ge|X|^2_{\hat g}$ and $\{x\in\partial M:\partial_\nu\varphi(x)>0\}\subset\Gamma$. By \cite[Proposition~4.2 and Theorem~3.2]{Mil03}, this condition implies that every generalized geodesic of length larger than $T$ leaves $M$ through $\Gamma$. In \cite{CMMS26}, the convexity of the weight is required only on an open set $V$ whose complement is contained in $\omega$, and $\partial M\cap V$ plays the role of an obstacle, on which $\partial_\nu d<0$. Condition \Gh{} is the case $\partial M\cap V=\emptyset$ of this structure, with respect to the two ray metrics $g_S$ and $g_P$. The proof of Proposition~\ref{prop:rays} is the localized version, on the undamped region $K$, of the escape argument of \cite{Mil03}.

%=====================================================================
\section{The Carleman estimate in negative norms}\label{app:IIY}
%=====================================================================

In this appendix we prove the estimate \eqref{eq:IIY} used in the proof of Lemma~\ref{lem:carleman-neg}. The argument is due to Imanuvilov, Isakov and Yamamoto \cite[Section~3]{IIY03}, who proved it for operators with strongly pseudoconvex weights in Euclidean domains, and we claim no originality. We reproduce it for the reader's convenience, because in our setting the strong pseudoconvexity of the weight is replaced by the Riemannian Carleman estimate of Lemma~\ref{lem:carleman-scalar}, and because we do not use the normalization $P(\nabla\phi)=1$ adopted in \cite{IIY03}. A third difference is the following. The estimate of \cite[Theorem~3.2]{IIY03} is stated for $v\in H^2_0(Q)$, whereas in Theorem~\ref{thm:HC0} the unknowns $\delta z$ and $\dd z$ are merely square integrable and the sources are distributions of divergence form, so that the version \eqref{eq:IIY} for $v\in L^2$ is what is needed; it follows from the smooth case by a mollification argument based on Friedrichs' lemma, which we include.

We keep the notation of the proof of Lemma~\ref{lem:carleman-neg}. Thus $x^0=t$ and $x=(x^1,\dots,x^n)$ are the coordinates of the chart $W_\ell''$, the sets $Q$ and $Q^*$ are given by \eqref{eq:QQstar}, and $P_0=\partial_t^2-c^2g^{jl}\partial_j\partial_l$ has smooth coefficients on $\overline{Q^*}$. We fix $\vartheta\in C^\infty_c(\R\times x(W_\ell''))$ with $\vartheta=1$ on a neighbourhood of $\overline{Q^*}$, which is possible because the coefficients of $P_0$ do not depend on $t$ and $\phi$ is defined by \eqref{eq:weight} for all $t\in\R$, and define, on $\R^{n+1}$,
\[
P=\partial_t^2-a^{jl}\partial_j\partial_l,\qquad a^{jl}=\vartheta c^2g^{jl},\qquad \tilde\phi=\vartheta\phi+1-\vartheta ,
\]
so that $P=P_0$ and $\tilde\phi=\phi$ on $Q^*$, and the coefficients $a^{jl}$ and the function $\tilde\phi$ are smooth with bounded derivatives. We write $\phi$ for $\tilde\phi$, $\nabla$ for the full gradient in $(t,x)$, and $\|\cdot\|$ for the norm of $L^2(\R^{n+1})$. The conjugated operator is
\begin{equation}\label{eq:conj}
P_sw=e^{s\phi}P\bigl(e^{-s\phi}w\bigr)=Pw+sR_1w+s^2b\,w,\qquad b=(\partial_t\phi)^2-a^{jl}\partial_j\phi\,\partial_l\phi ,
\end{equation}
where $R_1$ is a first-order operator with smooth bounded coefficients and $b$ is the principal symbol of $P$ evaluated at $\nabla\phi$. Following \cite{IIY03}, we denote by $\Lambda_s$ the Fourier multiplier on $\R^{n+1}$ with symbol $\langle\xi\rangle+s$, where $\langle\xi\rangle=(1+|\xi|^2)^{1/2}$ and $\xi=(\xi_0,\dots,\xi_n)$, and we set $\Lambda_0=\Lambda_s-s$. By Parseval's identity, for $s\ge1$,
\begin{equation}\label{eq:parseval}
\|\Lambda_s^{-1}w\|\le s^{-1}\|w\|,\qquad \|\partial_j\Lambda_s^{-1}w\|\le\|w\|,\qquad \|w\|^2\le2\bigl(s^2\|\Lambda_s^{-1}w\|^2+\|\nabla\Lambda_s^{-1}w\|^2+\|\Lambda_s^{-1}w\|^2\bigr),
\end{equation}
the last inequality because $\langle\xi\rangle+s\le\sqrt2(1+|\xi|^2+s^2)^{1/2}$.

\begin{lemma}\label{lem:B1}
There exist $s_0>0$ and $C>0$ such that, for every $v_0\in H^2(\R^{n+1})$ with compact support in $Q^*$,
\[
s^3\|v_0\|^2+s\|\nabla v_0\|^2\le C\|P_sv_0\|^2,\qquad s\ge s_0 .
\]
\end{lemma}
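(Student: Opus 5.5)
The plan is to reduce Lemma~\ref{lem:B1} to the scalar Carleman estimate of Lemma~\ref{lem:carleman-scalar} by undoing the conjugation. Given $v_0\in H^2(\R^{n+1})$ with compact support in $Q^*$, set $u=e^{-s\phi}v_0$. Then $u$ has compact support in $Q^*=I_T\times x(W_\ell'\cap U_1)$, and $e^{s\phi}P_0u=P_sv_0$ because $P=P_0$ and $\tilde\phi=\phi$ on $Q^*$. Transported back to $M$ through the chart, $u$ is a function in $H^2(I_T\times M)$ with compact support in $I_T\times U_1$. This is exactly the class admitted in Lemma~\ref{lem:carleman-scalar}, which allows support in $I_T\times U_1$ rather than $(0,T)\times U$.

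Next, recall from the proof of Lemma~\ref{lem:carleman-neg} that $P_0=\partial_t^2-c^2\Delta_g+P_1$, where $P_1=-c^2g^{jl}\Gamma^k_{jl}\partial_k$ is first order with smooth bounded coefficients. Lemma~\ref{lem:carleman-scalar} then gives
\[
s\ww{\nabla u}+s\ww{\partial_tu}+s^3\ww{u}\le C\ww{P_0u}=C\int e^{2s\phi}|P_0u|^2\,\dd v\,\dd t .
\]
The Riemannian measure $\dd v$ and the coordinate measure $\dd x$ are comparable on $\overline{W_\ell'}$, and so are $|\nabla u|_g$ and the coordinate norm of the gradient. Hence the right-hand side is bounded by $C\|P_sv_0\|^2$, and the left-hand side bounds $s\|e^{s\phi}\nabla u\|^2+s^3\|e^{s\phi}u\|^2$ in the norm of $L^2(\R^{n+1})$, with $\nabla$ the full gradient in $(t,x)$.

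Finally, write $v_0=e^{s\phi}u$ and $\nabla v_0=e^{s\phi}(\nabla u+s u\nabla\phi)$, where $\nabla\phi$ is bounded on $\overline{Q^*}$. This gives
\[
\|\nabla v_0\|^2\le 2\|e^{s\phi}\nabla u\|^2+Cs^2\|e^{s\phi}u\|^2 .
\]
Multiplying by $s$, the right-hand side is $2s\|e^{s\phi}\nabla u\|^2+Cs^3\|e^{s\phi}u\|^2$, and both terms are already controlled. Also $s^3\|v_0\|^2=s^3\|e^{s\phi}u\|^2$. Adding these yields $s^3\|v_0\|^2+s\|\nabla v_0\|^2\le C\|P_sv_0\|^2$ for $s\ge s_0$, where $s_0$ is the one from Lemma~\ref{lem:carleman-scalar} for $c$ and $P_1$ (any $s_0\ge1$ at least that large works).

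The main point to check carefully is the bookkeeping of supports and domains. The weight in Lemma~\ref{lem:carleman-scalar} is the same $\phi$ of \eqref{eq:weight} on $\overline{I_T}\times\overline{U_1}$. The cutoff $\vartheta$ equals $1$ near $\overline{Q^*}$, so the extended operator $P$ and the extended weight $\tilde\phi$ play no role for functions supported in $Q^*$. The scalar estimate is stated for $H^2$ functions with compact support in $I_T\times U_1$, which covers $u$ since $u$ is compactly supported in $Q^*$. No absorption argument is needed beyond the lower-order term $P_1$, which Lemma~\ref{lem:carleman-scalar} already handles.
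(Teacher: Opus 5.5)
Your proposal is correct and follows the paper's proof essentially step by step: you undo the conjugation with $u=e^{-s\phi}v_0$, apply Lemma~\ref{lem:carleman-scalar} with $P_1=-c^2g^{jl}\Gamma^k_{jl}\partial_k$, compare the coordinate and Riemannian quantities on $W_\ell'$, and use $|\nabla v_0|\le e^{s\phi}\bigl(|\nabla u|+s|\nabla\phi|\,|u|\bigr)$. The one detail the paper makes explicit and you skip is that $P_1$ is defined only on the chart. Its coefficients should therefore be multiplied by a cut-off in $C^\infty_c(W_\ell'')$ equal to $1$ on $\overline{W_\ell'}$, which gives a globally defined operator with bounded coefficients to which Lemma~\ref{lem:carleman-scalar} applies, without changing $P_1u$.
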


\begin{proof}
Let $u=e^{-s\phi}v_0$. Pulled back to the manifold, $u$ belongs to $H^2(I_T\times M)$ and has compact support in $I_T\times(W_\ell'\cap U_1)$. On $W_\ell'$ we have $P_0=\partial_t^2-c^2\Delta_g+P_1$ with $P_1=-c^2g^{jl}\Gamma^k_{jl}\partial_k$, and we may multiply the coefficients of $P_1$ by a function of $C^\infty_c(W_\ell'')$ equal to $1$ on $\overline{W_\ell'}$ without changing $P_1u$. Lemma~\ref{lem:carleman-scalar} gives
\[
s\ww{\nabla u}+s\ww{\partial_tu}+s^3\ww{u}\le C\ww{P_0u},\qquad s\ge s_0 .
\]
On $W_\ell'$ the coordinate measure and the coordinate norm of the gradient are comparable with $\dd v$ and $|\nabla\cdot|$, $e^{s\phi}P_0u=P_sv_0$, and $|\nabla v_0|\le e^{s\phi}\bigl(|\nabla u|+s|\nabla\phi|\,|u|\bigr)$. The claim follows.
\end{proof}

\begin{lemma}\label{lem:B2}
There exists $C>0$ such that $\|\Lambda_s^{-1}P_sw-P_s\Lambda_s^{-1}w\|\le C\|w\|$ for every $w\in C^\infty_c(\R^{n+1})$ and every $s\ge1$.
\end{lemma}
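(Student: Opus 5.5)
The plan is to write the commutator explicitly and treat its pieces by symbolic calculus, using only Parseval and Plancherel. By \eqref{eq:conj},
\[
\Lambda_s^{-1}P_sw-P_s\Lambda_s^{-1}w=[\Lambda_s^{-1},P]w+s[\Lambda_s^{-1},R_1]w+s^2[\Lambda_s^{-1},b]w .
\]
The identity $[\Lambda_s^{-1},L]=\Lambda_s^{-1}[L,\Lambda_s]\Lambda_s^{-1}$ holds for every operator $L$. Since $\Lambda_s=\Lambda_0+s$ and $s$ commutes with everything, we have $[L,\Lambda_s]=[L,\Lambda_0]$. The operator $\Lambda_0$ is a classical pseudodifferential operator of order one, with symbol $\langle\xi\rangle$ independent of $s$. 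Hence, for a multiplication operator by a smooth function $m$ with bounded derivatives, $[m,\Lambda_0]$ is bounded on $L^2$, uniformly in $s$. For a differential operator $L$ of order $k$ with such coefficients, $[L,\Lambda_0]$ is of order $k$, that is, bounded from $H^k$ to $L^2$.

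The terms are then estimated one at a time.

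\emph{The term $s^2[\Lambda_s^{-1},b]w$.} We get $\|s^2\Lambda_s^{-1}[b,\Lambda_0]\Lambda_s^{-1}w\|\le s^2\cdot s^{-1}\cdot C\cdot s^{-1}\|w\|=C\|w\|$, using \eqref{eq:parseval} twice.

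\emph{The term $s[\Lambda_s^{-1},R_1]w$.} Write $R_1=r^j\partial_j+r$. The commutator $[R_1,\Lambda_0]$ is of order one. We bound $\|[R_1,\Lambda_0]\Lambda_s^{-1}w\|\le C(\|\nabla\Lambda_s^{-1}w\|+\|\Lambda_s^{-1}w\|)\le C\|w\|$ by \eqref{eq:parseval}. The remaining factor $s\Lambda_s^{-1}$ has norm at most $1$. This gives $C\|w\|$.

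\emph{The term $[\Lambda_s^{-1},P]w$.} This is the main point, since $P$ has order two. Write $[P,\Lambda_0]=[a^{jl},\Lambda_0]\partial_j\partial_l$ after moving the coefficients, using that $\partial_t^2$ and $\partial_j\partial_l$ commute with $\Lambda_0$. The operator $[a^{jl},\Lambda_0]$ has order zero, with principal symbol $i\,\partial_\xi\langle\xi\rangle\cdot\partial_xa^{jl}$. I would rewrite
\[
[a^{jl},\Lambda_0]\partial_j\partial_l=\partial_j\bigl([a^{jl},\Lambda_0]\partial_l\bigr)-[\partial_ja^{jl},\Lambda_0]\partial_l .
\]
Then $\Lambda_s^{-1}\partial_j$ is bounded by $1$. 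The operator $[a^{jl},\Lambda_0]\partial_l\Lambda_s^{-1}$ is bounded because $[a^{jl},\Lambda_0]$ is $L^2$-bounded and $\|\partial_l\Lambda_s^{-1}\|\le1$. The second piece is even better, since $\|\Lambda_s^{-1}\|\le s^{-1}$. So this term is also bounded by $C\|w\|$.

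The only nontrivial ingredient is the $L^2$-boundedness of $[m,\Lambda_0]$ uniformly in nothing (no $s$ enters). This is the standard commutator theorem of Calder\'on type for a first-order classical symbol and a $C^\infty_b$ multiplier. Alternatively it follows from the $S^0$ calculus, since $\langle\xi\rangle\in S^1$ and the commutator has symbol in $S^0$.

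The step I expect to need care is exactly this one: checking that the $s$-independence of $\Lambda_0$ is what makes all constants uniform in $s\ge1$. The key observation is that every factor containing $s$ enters only through $\Lambda_s^{-1}$, whose bounds in \eqref{eq:parseval} are uniform.
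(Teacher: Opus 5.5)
Your proposal is correct and follows the paper's proof. The ingredients are the same: the identity $[\Lambda_s^{-1},L]=\Lambda_s^{-1}[L,\Lambda_0]\Lambda_s^{-1}$, the bounds \eqref{eq:parseval}, the $L^2$-boundedness of $[m,\Lambda_0]$, and, for the second-order part, the rewriting $[a,\Lambda_0]\partial_j=\partial_j[a,\Lambda_0]-[\partial_ja,\Lambda_0]$, which pairs each derivative with a factor $\Lambda_s^{-1}$. The only cosmetic difference is that you commute $\Lambda_s^{-1}$ with the whole operators $P$, $R_1$ and $b$, whereas the paper treats the terms coefficient by coefficient.
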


\begin{proof}
This is \cite[Lemma~3.3]{IIY03}, with one additional term. For a smooth function $a$ with bounded derivatives, the commutator $[a,\Lambda_0]$ is a pseudodifferential operator of order zero, hence bounded on $L^2(\R^{n+1})$, and
\begin{equation}\label{eq:comm-basic}
[\Lambda_s^{-1},a]=\Lambda_s^{-1}[a,\Lambda_s]\Lambda_s^{-1}=\Lambda_s^{-1}[a,\Lambda_0]\Lambda_s^{-1},\qquad \bigl\|[\Lambda_s^{-1},a]\bigr\|_{L^2\to L^2}\le C(1+s)^{-2}.
\end{equation}
Since $\Lambda_s^{-1}$ commutes with derivatives, by \eqref{eq:conj} it suffices to bound $[\Lambda_s^{-1},a]\partial^\alpha w$ for $|\alpha|\le2$, $s[\Lambda_s^{-1},a]\partial^\beta w$ for $|\beta|\le1$ and $s^2[\Lambda_s^{-1},b]w$. The last term is bounded by $C\|w\|$ by \eqref{eq:comm-basic}. For $|\beta|=1$ we write $[\Lambda_s^{-1},a]\partial_jw=\Lambda_s^{-1}[a,\Lambda_0]\Lambda_s^{-1}\partial_jw$ and use \eqref{eq:parseval}, which gives the bound $Cs^{-1}\|w\|$, and for $|\beta|=0$ we use \eqref{eq:comm-basic}. For $|\alpha|=2$ we write $\partial^\alpha=\partial_j\partial_k$ and $w_1=\Lambda_s^{-1}\partial_kw$, so that $\|w_1\|\le\|w\|$ and, by the identity $[a,\Lambda_0]\partial_j=\partial_j[a,\Lambda_0]-[\partial_ja,\Lambda_0]$,
\[
[\Lambda_s^{-1},a]\partial_j\partial_kw=\Lambda_s^{-1}[a,\Lambda_0]\partial_jw_1=\Lambda_s^{-1}\partial_j[a,\Lambda_0]w_1-\Lambda_s^{-1}[\partial_ja,\Lambda_0]w_1 ,
\]
whose norm is bounded by $C\|w_1\|$ by \eqref{eq:parseval} and the boundedness of the two commutators. The case $|\alpha|\le1$ is contained in the previous ones.
\end{proof}

\begin{lemma}\label{lem:B3}
There exist $s_1>0$ and $C>0$ such that, for every $v\in C^\infty_c(Q)$,
\[
s^3\|\Lambda_s^{-1}v\|^2+s\|v\|^2\le C\|\Lambda_s^{-1}P_sv\|^2,\qquad s\ge s_1 .
\]
\end{lemma}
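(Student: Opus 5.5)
The plan is to apply Lemma~\ref{lem:B1} to $v_0=\Lambda_s^{-1}v$, after a cut-off that restores compact support in $Q^*$. Then Lemma~\ref{lem:B2} converts $P_s\Lambda_s^{-1}v$ into $\Lambda_s^{-1}P_sv$, at the cost of an error of size $C\|v\|$, which is absorbed by the term $s\|v\|^2$. Since $\Lambda_s^{-1}$ is nonlocal, $\Lambda_s^{-1}v$ does not have compact support. So fix $\chi\in C^\infty_c(Q^*)$ with $\chi=1$ on a neighbourhood of $\overline Q$, and set $v_0=\chi\Lambda_s^{-1}v$. This belongs to $H^2$, since $v$ is smooth with compact support and $\Lambda_s^{-1}$ maps $H^k$ into $H^{k+1}$, and it is supported in $Q^*$. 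Lemma~\ref{lem:B1} then gives
\[
s^3\|\chi\Lambda_s^{-1}v\|^2+s\|\nabla(\chi\Lambda_s^{-1}v)\|^2\le C\|P_s(\chi\Lambda_s^{-1}v)\|^2 .
\]

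\emph{Tail estimate.} We must bound $(1-\chi)\Lambda_s^{-1}v$ together with its derivatives up to order two. Since $1-\chi$ vanishes near $\supp v$, the operator $(1-\chi)\Lambda_s^{-1}\tilde\chi$, where $\tilde\chi=1$ near $\overline Q$ and $\supp\tilde\chi\cap\supp(1-\chi)=\emptyset$, is smoothing. I would write it as $(1-\chi)[\Lambda_s^{-1},\tilde\chi]$ and iterate \eqref{eq:comm-basic} with nested cut-offs; each commutation gains a factor $(1+s)^{-1}$ in $L^2$ operator norm and also one order of smoothing. Two or three iterations should give
\[
\|(1-\chi)\Lambda_s^{-1}v\|_{H^2}\le Cs^{-2}\|v\|.
\]
This estimate is uniform in $s$, because the symbol $(\langle\xi\rangle+s)^{-1}$ belongs to $S^{-1}$ uniformly in $s$, and each commutator with a smooth function behaves like $s^{-1}$ times a bounded operator. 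This handles both the left side and the commutator $[P_s,\chi]$. That commutator is first order, with coefficients of size up to $s$, and is supported where $\nabla\chi\neq0$, which lies away from $\supp v$. Consequently $\|[P_s,\chi]\Lambda_s^{-1}v\|\le Cs\|(1-\tilde\chi')\Lambda_s^{-1}v\|_{H^1}\le C\|v\|$ for a suitable cut-off $\tilde\chi'$.

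\emph{Assembly.} We have
\[
\|P_s v_0\|\le\|\Lambda_s^{-1}P_sv\|+\|[P_s,\Lambda_s^{-1}]v\|+\|[P_s,\chi]\Lambda_s^{-1}v\|+\|(1-\chi)P_s\Lambda_s^{-1}v\|,
\]
and all terms except the first are at most $C\|v\|$ by Lemma~\ref{lem:B2} and the tail estimate. On the left, \eqref{eq:parseval} gives $\|v\|^2\le2(s^2\|\Lambda_s^{-1}v\|^2+\|\nabla\Lambda_s^{-1}v\|^2+\|\Lambda_s^{-1}v\|^2)$. Multiplying by $s$ and inserting the bound for $v_0$ plus the tails, we get
\[
s^3\|\Lambda_s^{-1}v\|^2+s\|v\|^2\le C\|\Lambda_s^{-1}P_sv\|^2+C\|v\|^2 ,
\]
and the last term is absorbed once $s\ge s_1$ is large.

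The main obstacle is the tail estimate, i.e.\ showing that the off-support part of $\Lambda_s^{-1}v$ decays like a negative power of $s$ in $H^2$, uniformly. If the nested-commutator argument gives only $s^{-1}$ in some norm, that still suffices, since $[P_s,\chi]$ contributes at most $s\cdot s^{-1}\|v\|=O(\|v\|)$.
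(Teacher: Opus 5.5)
Your argument is correct, and its skeleton is the paper's. You apply Lemma~\ref{lem:B1} to $\chi z$ with $z=\Lambda_s^{-1}v$, use Lemma~\ref{lem:B2} to replace $P_sz$ by $\Lambda_s^{-1}P_sv$, use the last inequality of \eqref{eq:parseval} to produce $s\|v\|^2$, and bound $z$ where $\chi\ne1$.

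Where you differ is in how you use and prove the tail estimate. The paper needs no locality on the right-hand side. It writes $\|\chi P_sz\|\le C\|P_sz\|$ and $|[P_s,\chi]z|\le C(|\nabla z|+s|z|)$, and the latter has $L^2$ norm at most $C\|v\|$ directly by \eqref{eq:parseval}. So your term $(1-\chi)P_s\Lambda_s^{-1}v$ and the tail bound for the commutator can be dropped. The tail is needed only on the left. There the paper uses the explicit kernel bound $|\partial_x^\alpha K_s(x,y)|\le Cs^{-2}|x-y|^{-2n-4}$ for $|\alpha|\le1$ and $|x-y|\ge d_1$, obtained by integrating by parts in $\xi$ as in \cite[Lemma~3.4]{IIY03}.

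Your commutator route is a valid alternative, and it is lighter than you anticipate. Take $\tilde\chi=1$ near $\overline Q$ and $\chi=1$ near $\supp\tilde\chi$. One application of \eqref{eq:comm-basic} gives
\[
(1-\chi)z=(1-\chi)W,\qquad W=\Lambda_s^{-1}[\tilde\chi,\Lambda_0]\Lambda_s^{-1}v,\qquad \|W\|\le Cs^{-2}\|v\|,\quad \|\nabla W\|\le Cs^{-1}\|v\|,
\]
and hence $s^3\|(1-\chi)z\|^2+s\|\nabla((1-\chi)z)\|^2\le Cs^{-1}\|v\|^2$. No iteration and no $H^2$ bound are required. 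Note that a commutator gains one factor $\Lambda_s^{-1}$, which you can spend either as $s^{-1}$ or as a derivative, not both. This version replaces the oscillatory-integral computation by the $L^2$ boundedness of $[\tilde\chi,\Lambda_0]$, which the paper already uses in Lemma~\ref{lem:B2}.

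One correction: your closing remark that an $s^{-1}$ bound ``still suffices'' holds for the commutator on the right, but not on the left. There the $L^2$ tail is multiplied by $s^3$ and must be absorbed by $s\|v\|^2$. Since $\|z\|\le s^{-1}\|v\|$ already holds trivially by \eqref{eq:parseval}, a tail bound $\|(1-\chi)z\|\le Cs^{-1}\|v\|$ only yields $Cs\|v\|^2$, which cannot be absorbed. You need a genuine gain over the trivial bounds: $o(s^{-1})$ in $L^2$ and $o(1)$ for the gradient. The $s^{-2}$ and $s^{-1}$ above provide exactly this.
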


\begin{proof}
This is \cite[Lemma~3.5]{IIY03}. Fix an open set $Q_1$ with $\overline Q\subset Q_1$ and $\overline{Q_1}\subset Q^*$, let $d_1=\dist(Q,\R^{n+1}\setminus Q_1)>0$, and let $\chi\in C^\infty_c(Q^*)$ with $\chi=1$ on $Q_1$. Let $z=\Lambda_s^{-1}v$, which is a smooth and rapidly decreasing function. Lemma~\ref{lem:B1} applied to $v_0=\chi z$ gives
\[
s^3\|\chi z\|^2+s\|\nabla(\chi z)\|^2\le C\|P_s(\chi z)\|^2 .
\]
By \eqref{eq:conj}, $P_s(\chi z)-\chi P_sz=[P,\chi]z+s[R_1,\chi]z$ is bounded pointwise by $C(|\nabla z|+s|z|)$, so that, by Lemma~\ref{lem:B2} and \eqref{eq:parseval},
\begin{equation}\label{eq:B3-upper}
s^3\|\chi z\|^2+s\|\nabla(\chi z)\|^2\le C\bigl(\|P_sz\|^2+\|\nabla z\|^2+s^2\|z\|^2\bigr)\le C\bigl(\|\Lambda_s^{-1}P_sv\|^2+\|v\|^2\bigr).
\end{equation}
We now bound the left-hand side from below. By the last inequality in \eqref{eq:parseval}, $s\|v\|^2\le C(s^3\|z\|^2+s\|\nabla z\|^2)$. On $Q_1$ we have $\chi z=z$, and on $\R^{n+1}\setminus Q_1$ we use the kernel estimate of \cite[Lemma~3.4]{IIY03}. The Schwartz kernel of $\Lambda_s^{-1}$ is the oscillatory integral
\[
K_s(x,y)=(2\pi)^{-n-1}\int_{\R^{n+1}}e^{i(x-y)\cdot\xi}\bigl(\langle\xi\rangle+s\bigr)^{-1}\dd\xi ,
\]
and $2(n+2)$ integrations by parts in $\xi$, together with the bound $|\partial_\xi^\beta(\xi^\alpha(\langle\xi\rangle+s)^{-1})|\le C_\beta(\langle\xi\rangle+s)^{-2}\langle\xi\rangle^{2-|\beta|}$ for $|\alpha|\le1$ and $|\beta|\ge2$, give
\[
|\partial_x^\alpha K_s(x,y)|\le C(d_1)\,s^{-2}|x-y|^{-2n-4},\qquad |\alpha|\le1,\ |x-y|\ge d_1 .
\]
Since $Q$ is bounded, there is $C_Q$ such that $|x-y|\ge C_Q^{-1}(1+|x|)$ for $y\in Q$ and $x\notin Q_1$, and therefore $|\partial^\alpha z(x)|\le Cs^{-2}(1+|x|)^{-2n-4}\|v\|$ for such $x$ and $|\alpha|\le1$. Consequently
\[
\int_{\R^{n+1}\setminus Q_1}\bigl(s^3|z|^2+s|\nabla z|^2\bigr)\dd x\,\dd t\le Cs^{-1}\|v\|^2 ,
\]
and, combining this with \eqref{eq:B3-upper},
\[
s\|v\|^2+s^3\|\Lambda_s^{-1}v\|^2\le C\bigl(s^3\|\chi z\|^2+s\|\nabla(\chi z)\|^2\bigr)+Cs^{-1}\|v\|^2\le C\|\Lambda_s^{-1}P_sv\|^2+C\|v\|^2+Cs^{-1}\|v\|^2 .
\]
For $s$ large the last two terms are absorbed by $s\|v\|^2$.
\end{proof}

\begin{proof}[Proof of \eqref{eq:IIY}]
This is the argument of \cite[Theorem~3.2]{IIY03}. Assume first that $v\in C^\infty_c(Q)$ and let $w=e^{s\phi}v\in C^\infty_c(Q)$. Since $P=P_0$ on $Q$,
\[
P_sw=e^{s\phi}P_0v=e^{s\phi}h_0+\sum_{j=0}^n\partial_j\bigl(e^{s\phi}g_j\bigr)-s\sum_{j=0}^n(\partial_j\phi)e^{s\phi}g_j .
\]
Lemma~\ref{lem:B3} and \eqref{eq:parseval} give
\[
s\|e^{s\phi}v\|^2\le C\|\Lambda_s^{-1}P_sw\|^2\le C\Bigl(s^{-2}\|e^{s\phi}h_0\|^2+\sum_{j=0}^n\|e^{s\phi}g_j\|^2\Bigr),
\]
where the factor $s$ in the last term of $P_sw$ has been compensated by the norm $s^{-1}$ of $\Lambda_s^{-1}$. This is \eqref{eq:IIY} for smooth $v$. Since the constants depend only on the sets $Q$, $Q_1$ and $Q^*$, the same estimate holds, with constants depending on $Q'$, when $Q$ is replaced by an open set $Q'$ with $\overline Q\subset Q'$ and $\overline{Q'}\subset Q_1$. For a general $v\in L^2$ with compact support in $Q$, let $v_\epsilon=\eta_\epsilon*v$, where $\eta_\epsilon$ is a standard mollifier in $\R^{n+1}$, so that $v_\epsilon\in C^\infty_c(Q')$ for $\epsilon$ small. Writing $P_0=\partial_t^2-\partial_j(a^{jl}\partial_l)+(\partial_ja^{jl})\partial_l$, we have
\[
P_0v_\epsilon=\eta_\epsilon*h_0+\sum_{j=0}^n\partial_j(\eta_\epsilon*g_j)+r^\epsilon+\sum_{j=1}^n\partial_jr_j^\epsilon,\qquad r^\epsilon=\bigl[(\partial_ja^{jl})\partial_l,\eta_\epsilon*\bigr]v,\quad r_j^\epsilon=-\bigl[a^{jl}\partial_l,\eta_\epsilon*\bigr]v ,
\]
and Friedrichs' lemma \cite{Fri44}, in the form of \cite[Lemma~17.1.5]{Hor07}, according to which $[b\,\partial_l,\eta_\epsilon*]v\to0$ in $L^2$ as $\epsilon\to0$ for every $v\in L^2$ and every Lipschitz function $b$, shows that $r^\epsilon\to0$ and $r^\epsilon_j\to0$ in $L^2$. Applying the smooth case to $v_\epsilon$ and letting $\epsilon\to0$ proves \eqref{eq:IIY}.
\end{proof}

\section*{Acknowledgments} This work was supported by the project PID2025-174633NA-I00, funded by \mbox{MICIU/AEI}/\allowbreak 10.13039/\allowbreak 501100011033, and by the Madrid Government (Comunidad de Madrid, Spain) under the Multiannual Agreement 2023--2026 with the Universidad Polit\'ecnica de Madrid, Line A, Emerging PhD Researchers (DOCTORES-EMERGENTES-24-UMDLRU-108-XVLWPC).

\section*{Declaration of competing interest} The author declares that he has no known competing financial interests or personal relationships that could have appeared to influence the work reported in this paper.

\section*{Data availability} No data was used for the research described in the article.


\begin{thebibliography}{99}

\bibitem{AK99} F. Alabau and V. Komornik, Boundary observability, controllability, and stabilization of linear elastodynamic systems, \emph{SIAM J. Control Optim.} \textbf{37} (1999), 521--542.

\bibitem{AM01} G. Alessandrini and A. Morassi, Strong unique continuation for the Lam\'e system of elasticity, \emph{Comm. Partial Differential Equations} \textbf{26} (2001), 1787--1810.

\bibitem{AITY98} D. D. Ang, M. Ikehata, D. D. Trong and M. Yamamoto, Unique continuation for a stationary isotropic Lam\'e system with variable coefficients, \emph{Comm. Partial Differential Equations} \textbf{23} (1998), 599--617.

\bibitem{AC02} M. A. Astaburuaga and R. C. Char\~ao, Stabilization of the total energy for a system of elasticity with localized dissipation, \emph{Differential Integral Equations} \textbf{15} (2002), 1357--1376.

\bibitem{BLR92} C. Bardos, G. Lebeau and J. Rauch, Sharp sufficient conditions for the observation, control, and stabilization of waves from the boundary, \emph{SIAM J. Control Optim.} \textbf{30} (1992), 1024--1065.

\bibitem{BD13} A. Bchatnia and M. Daoulatli, Behavior of the energy for Lam\'e systems in bounded domains with nonlinear damping and external force, \emph{Electron. J. Differential Equations} \textbf{2013} (2013), No. 01, 1--17.

\bibitem{BL02} M. I. Belishev and I. Lasiecka, The dynamical Lam\'e system: regularity of solutions, boundary controllability and boundary data continuation, \emph{ESAIM Control Optim. Calc. Var.} \textbf{8} (2002), 143--167.

\bibitem{Bel01} M. Bellassoued, Unicit\'e et contr\^ole pour le syst\`eme de Lam\'e, \emph{ESAIM Control Optim. Calc. Var.} \textbf{6} (2001), 561--592.

\bibitem{Bel03} M. Bellassoued, Decay of solutions of the elastic wave equation with a localized dissipation, \emph{Ann. Fac. Sci. Toulouse Math. (6)} \textbf{12} (2003), 267--301.

\bibitem{BIY08} M. Bellassoued, O. Imanuvilov and M. Yamamoto, Inverse problem of determining the density and two Lam\'e coefficients by boundary data, \emph{SIAM J. Math. Anal.} \textbf{40} (2008), 238--265.

\bibitem{BY17} M. Bellassoued and M. Yamamoto, \emph{Carleman Estimates and Applications to Inverse Problems for Hyperbolic Systems}, Springer Monographs in Mathematics, Springer, Tokyo, 2017.

\bibitem{BSMS23} L. E. Bocanegra-Rodr\'iguez, M. A. Jorge Silva, T. F. Ma and P. N. Seminario-Huertas, Longtime dynamics of a semilinear Lam\'e system, \emph{J. Dynam. Differential Equations} \textbf{35} (2023), 1435--1456.

\bibitem{BMS26} L. E. Bocanegra-Rodr\'iguez, R. Nunes Monteiro and P. N. Seminario-Huertas, From global to robust exponential attractors for nonlinear elastic waves with variable Lam\'e coefficients, submitted.

\bibitem{BLe01} N. Burq and G. Lebeau, Mesures de d\'efaut de compacit\'e, application au syst\`eme de Lam\'e, \emph{Ann. Sci. \'Ecole Norm. Sup. (4)} \textbf{34} (2001), 817--870.

\bibitem{CDFS10} M. M. Cavalcanti, V. N. Domingos Cavalcanti, R. Fukuoka and J. A. Soriano, Asymptotic stability of the wave equation on compact manifolds and locally distributed damping: a sharp result, \emph{Arch. Ration. Mech. Anal.} \textbf{197} (2010), 925--964.

\bibitem{CMMS26} M. M. Cavalcanti, T. F. Ma, P. Mar\'in-Rubio and P. N. Seminario-Huertas, Supercritical wave equations with $C^1$ nonlinearities: finite-dimensional dynamics and sharp measure-controlled damping, preprint, 2026.

\bibitem{CCD17} C. H. Chan, M. Czubak and M. M. Disconzi, The formulation of the Navier--Stokes equations on Riemannian manifolds, \emph{J. Geom. Phys.} \textbf{121} (2017), 335--346.

\bibitem{CJ02} W. Chen and J. Jost, A Riemannian version of Korn's inequality, \emph{Calc. Var. Partial Differential Equations} \textbf{14} (2002), 517--530.

\bibitem{Ch15} I. Chueshov, \emph{Dynamics of Quasi-Stable Dissipative Systems}, Universitext, Springer, Cham, 2015.

\bibitem{CL08} I. Chueshov and I. Lasiecka, Long-time behavior of second order evolution equations with nonlinear damping, \emph{Mem. Amer. Math. Soc.} \textbf{195} (2008), no. 912.

\bibitem{CL10} I. Chueshov and I. Lasiecka, \emph{Von Karman Evolution Equations. Well-Posedness and Long Time Dynamics}, Springer Monographs in Mathematics, Springer, New York, 2010.

\bibitem{CLT08} I. Chueshov, I. Lasiecka and D. Toundykov, Long-term dynamics of semilinear wave equation with nonlinear localized interior damping and a source term of critical exponent, \emph{Discrete Contin. Dyn. Syst.} \textbf{20} (2008), 459--509.

\bibitem{Ciarlet05} P. G. Ciarlet, \emph{An Introduction to Differential Geometry with Applications to Elasticity}, Springer, Dordrecht, 2005.

\bibitem{DR15} B. Dehman and J.-P. Raymond, Exact controllability for the Lam\'e system, \emph{Math. Control Relat. Fields} \textbf{5} (2015), 743--760.

\bibitem{DRob93} B. Dehman and L. Robbiano, La propri\'et\'e du prolongement unique pour un syst\`eme elliptique. Le syst\`eme de Lam\'e, \emph{J. Math. Pures Appl. (9)} \textbf{72} (1993), 475--492.

\bibitem{EINT02} M. Eller, V. Isakov, G. Nakamura and D. Tataru, Uniqueness and stability in the Cauchy problem for Maxwell and elasticity systems, in \emph{Nonlinear Partial Differential Equations and their Applications. Coll\`ege de France Seminar, Vol. XIV}, Stud. Math. Appl. \textbf{31}, North-Holland, Amsterdam, 2002, 329--349.

\bibitem{EN00} K.-J. Engel and R. Nagel, \emph{One-Parameter Semigroups for Linear Evolution Equations}, Grad. Texts in Math. \textbf{194}, Springer, New York, 2000.

\bibitem{Esc06} L. Escauriaza, Unique continuation for the system of elasticity in the plane, \emph{Proc. Amer. Math. Soc.} \textbf{134} (2006), 2015--2018.

\bibitem{FZ93} E. Feireisl and E. Zuazua, Global attractors for semilinear wave equations with locally distributed nonlinear damping and critical exponent, \emph{Comm. Partial Differential Equations} \textbf{18} (1993), 1539--1555.

\bibitem{Fri44} K. O. Friedrichs, The identity of weak and strong extensions of differential operators, \emph{Trans. Amer. Math. Soc.} \textbf{55} (1944), 132--151.

\bibitem{GLM14} N. Grubi\'c, P. G. LeFloch and C. Mardare, The equations of elastostatics in a Riemannian manifold, \emph{J. Math. Pures Appl.} \textbf{102} (2014), 1121--1163.

\bibitem{Gue00} A. Guesmia, On the decay estimates for elasticity systems with some localized dissipations, \emph{Asymptot. Anal.} \textbf{22} (2000), 1--13.

\bibitem{Hale88} J. K. Hale, \emph{Asymptotic Behavior of Dissipative Systems}, Math. Surveys Monogr. \textbf{25}, Amer. Math. Soc., Providence, RI, 1988.

\bibitem{Hor97} L. H\"ormander, On the uniqueness of the Cauchy problem under partial analyticity assumptions, in \emph{Geometrical Optics and Related Topics} (F. Colombini and N. Lerner, eds.), Progr. Nonlinear Differential Equations Appl. \textbf{32}, Birkh\"auser, Boston, 1997, 179--219.

\bibitem{Hor07} L. H\"ormander, \emph{The Analysis of Linear Partial Differential Operators III. Pseudo-Differential Operators}, Classics in Mathematics, Springer, Berlin, 2007.

\bibitem{Horn98} M. A. Horn, Sharp trace regularity for the solutions of the equations of dynamic elasticity, \emph{J. Math. Systems Estim. Control} \textbf{8} (1998), no. 2, 1--11.

\bibitem{Horn98b} M. A. Horn, Implications of sharp trace regularity results on boundary stabilization of the system of linear elasticity, \emph{J. Math. Anal. Appl.} \textbf{223} (1998), 126--150.

\bibitem{HYY24} M. Hu, X.-G. Yang and J. Yuan, Stability and dynamics for Lam\'e system with degenerate memory and time-varying delay, \emph{Appl. Math. Optim.} \textbf{89} (2024), Paper No. 14.

\bibitem{IIY03} O. Imanuvilov, V. Isakov and M. Yamamoto, An inverse problem for the dynamical Lam\'e system with two sets of boundary data, \emph{Comm. Pure Appl. Math.} \textbf{56} (2003), 1366--1382.

\bibitem{IY05} O. Yu. Imanuvilov and M. Yamamoto, Carleman estimates for the non-stationary Lam\'e system and the application to an inverse problem, \emph{ESAIM Control Optim. Calc. Var.} \textbf{11} (2005), 1--56.

\bibitem{IY05b} O. Yu. Imanuvilov and M. Yamamoto, Carleman estimates for the three-dimensional nonstationary Lam\'e system and application to an inverse problem, in \emph{Control Theory of Partial Differential Equations} (O. Imanuvilov, G. Leugering, R. Triggiani and B.-Y. Zhang, eds.), Lect. Notes Pure Appl. Math. \textbf{242}, Chapman \& Hall/CRC, Boca Raton, 2005, pp. 337--374.

\bibitem{IK08} V. Isakov and N. Kim, Carleman estimates with second large parameter and applications to elasticity with residual stress, \emph{Appl. Math. (Warsaw)} \textbf{35} (2008), 447--465.

\bibitem{IK09} V. Isakov and N. Kim, Carleman estimates with second large parameter for second order operators, in \emph{Sobolev Spaces in Mathematics III. Applications in Mathematical Physics} (V. Isakov, ed.), Int. Math. Ser. (N. Y.) \textbf{10}, Springer, New York, 2009, pp. 135--159.

\bibitem{IWY07} V. Isakov, J.-N. Wang and M. Yamamoto, An inverse problem for a dynamical Lam\'e system with residual stress, \emph{SIAM J. Math. Anal.} \textbf{39} (2007), 1328--1343.

\bibitem{JL13} R. Joly and C. Laurent, Stabilization for the semilinear wave equation with geometric control condition, \emph{Anal. PDE} \textbf{6} (2013), 1089--1119.

\bibitem{Lag83} J. Lagnese, Boundary stabilization of linear elastodynamic systems, \emph{SIAM J. Control Optim.} \textbf{21} (1983), 968--984.

\bibitem{LL19} C. Laurent and M. L\'eautaud, Quantitative unique continuation for operators with partially analytic coefficients. Application to approximate control for waves, \emph{J. Eur. Math. Soc.} \textbf{21} (2019), 957--1069.

\bibitem{LNW10} C.-L. Lin, G. Nakamura and J.-N. Wang, Optimal three-ball inequalities and quantitative uniqueness for the Lam\'e system with Lipschitz coefficients, \emph{Duke Math. J.} \textbf{155} (2010), 189--204.

\bibitem{LM72} J.-L. Lions and E. Magenes, \emph{Non-Homogeneous Boundary Value Problems and Applications. Vol. I}, Grundlehren Math. Wiss. \textbf{181}, Springer, Berlin, 1972.

\bibitem{MMS21} T. F. Ma, J. G. Mesquita and P. N. Seminario-Huertas, Smooth dynamics of weakly damped Lam\'e systems with delay, \emph{SIAM J. Math. Anal.} \textbf{53} (2021), 3759--3771.

\bibitem{MMS25} T. F. Ma, R. N. Monteiro and P. N. Seminario-Huertas, Attractors for locally damped Bresse systems and a unique continuation property, in \emph{Functional Differential Equations and Dynamic Equations on Time Scales} (P. Benevieri and J. G. Mesquita, eds.), Adv. Mech. Math. \textbf{52}, Birkh\"auser, Cham, 2025, pp. 165--196.

\bibitem{MH83} J. E. Marsden and T. J. R. Hughes, \emph{Mathematical Foundations of Elasticity}, Prentice-Hall, Englewood Cliffs, 1983; reprinted by Dover, New York, 1994.

\bibitem{MNY21} Y. Meng, Z.-H. Ning and F. Yang, Internal observability, controllability and stabilization of the inhomogeneous and anisotropic elastic wave equation, \emph{Appl. Math. Optim.} \textbf{84} (2021), 1605--1621.

\bibitem{Mil03} L. Miller, Escape function conditions for the observation, control, and stabilization of the wave equation, \emph{SIAM J. Control Optim.} \textbf{41} (2003), 1554--1566.

\bibitem{Pazy83} A. Pazy, \emph{Semigroups of Linear Operators and Applications to Partial Differential Equations}, Appl. Math. Sci. \textbf{44}, Springer, New York, 1983.

\bibitem{Pet16} P. Petersen, \emph{Riemannian Geometry}, 3rd ed., Grad. Texts in Math. \textbf{171}, Springer, Cham, 2016.

\bibitem{Puj03} J. Pujol, \emph{Elastic Wave Propagation and Generation in Seismology}, Cambridge University Press, Cambridge, 2003.

\bibitem{QW24} Y. Qin and H. Wang, Uniform attractor of a non-autonomous Lam\'e thermoelastic system, preprint, \allowbreak arXiv:2412.09992, 2024.

\bibitem{RZ98} L. Robbiano and C. Zuily, Uniqueness in the Cauchy problem for operators with partially holomorphic coefficients, \emph{Invent. Math.} \textbf{131} (1998), 493--539.

\bibitem{Ruiz92} A. Ruiz, Unique continuation for weak solutions of the wave equation plus a potential, \emph{J. Math. Pures Appl.} \textbf{71} (1992), 455--467.

\bibitem{TL22} X. Tan and G. Liu, Determining Lam\'e coefficients by elastic Dirichlet-to-Neumann map on a Riemannian manifold, preprint, arXiv:2211.06650, 2022.

\bibitem{Tat95} D. Tataru, Unique continuation for solutions to PDE's; between H\"ormander's theorem and Holmgren's theorem, \emph{Comm. Partial Differential Equations} \textbf{20} (1995), 855--884.

\bibitem{Tay23} M. E. Taylor, \emph{Partial Differential Equations I. Basic Theory}, 3rd ed., Appl. Math. Sci. \textbf{115}, Springer, Cham, 2023.

\bibitem{Wec01} N. Weck, Unique continuation for systems with Lam\'e principal part, \emph{Math. Methods Appl. Sci.} \textbf{24} (2001), 595--605.

\bibitem{Yu11} H. Yu, Strong unique continuation for the Lam\'e system with Lipschitz coefficients in three dimensions, \emph{ESAIM Control Optim. Calc. Var.} \textbf{17} (2011), 761--770.

\bibitem{YZSZ26} J. Yu, F. Zhou, Q. Sun and K. Zhu, Attractors for Lam\'e systems with supercritical nonlinearities and damping, \emph{J. Math. Phys.} \textbf{67} (2026), 072705.

\bibitem{Zua90} E. Zuazua, Exponential decay for the semilinear wave equation with locally distributed damping, \emph{Comm. Partial Differential Equations} \textbf{15} (1990), 205--235.

\end{thebibliography}
\end{document}